\newtheorem{theorem}{Theorem}
\newtheorem{lemma}[theorem]{Lemma} 
\newtheorem{proposition}[theorem]{Proposition} 
\newtheorem{corollary}[theorem]{Corollary}
\newtheorem{observation}{Observation}
\newtheorem{remark}{Remark}
\newcolumntype{Y}{>{\raggedright\arraybackslash}X}
\begin{document}
\let\WriteBookmarks\relax
\def\floatpagepagefraction{1}
\def\textpagefraction{.001}
\shorttitle{Metric dimension of Cartesian product of stars}
\shortauthors{Davoodi, Jannesari}

\title[mode=title]{Metric dimension of Cartesian product of stars}

\author{Akbar Davoodi}

\author{Mohsen Jannesari}

\nonumnote{}

\begin{abstract}
The metric dimension of a graph is the minimum number of landmark vertices required so that every vertex can be uniquely identified by its distances to the landmarks. 
This parameter captures the fundamental tradeoff between compact information encoding and unambiguous identification in networked systems. 
In this work, we determine exact value for the metric dimension of the Cartesian product $K_{1,m} \square K_{1,n}$, also known as hub-and-spoke grids, across all values of $m$ and $n$. 
In addition, we present a constructive linear-time algorithm that builds a minimum resolving set, providing both theoretical guarantees and practical feasibility. 
We complement our results with visualization of parameter regimes that illustrate the design space. 
The findings establish design rules for minimizing landmark sensors and support applications in graph-based localization, monitoring networks, and intelligent information systems. 
Our results extend the theory of metric dimension and contribute efficient methods of direct relevance to information science and computational graph theory.
\end{abstract}

\begin{keywords}
Metric dimension \sep
Cartesian product graphs \sep
Resolving sets \sep
Linear-time construction \sep
Graph-based localization\\[.1cm]

\MSC[2020] 05C12; 05C40; 05C90; 68R10
\end{keywords}

\maketitle

\section{Introduction}\label{Sec:Intro}
Graph theory is a fundamental area of discrete mathematics with broad applications, ranging from biology~\cite{pav} and neuroscience~\cite{dav} to business administration~\cite{wag} and transit network design~\cite{Transp}. Graphs offer a natural framework for modeling complex networks, including communication systems, transportation networks, and molecular structures. A key challenge in analyzing these networks is determining the minimal information required to uniquely identify locations within them. This leads to the concept of the \textit{metric dimension}, which quantifies the smallest number of reference points, or \emph{landmarks}, needed to distinguish all vertices in a network based on their distances from these landmarks.

In metrology and instrumentation, \emph{landmarks} translate to physical monitors or reference points (e.g., beacons, sensors, fiducial markers) whose distances to unknown nodes are obtained via time of flight, received-signal strength, hop-count, or interferometric phase. When a system exhibits a hub-and-spoke architecture, such as airline route networks, hierarchical data-center fabrics, or branched lattices in materials, its abstract model is the Cartesian product of two stars. Determining the metric dimension of $K_{1,m}\square K_{1,n}$ therefore answers a concrete measurement question: \emph{what is the minimum number of monitors required to localise every node with zero ambiguity?} Establishing an exact formula, rather than loose bounds, enables instrumentation planners to allocate measurement resources optimally and to quantify identifiability with certainty.

Graph-structured localisation and sensor-placement problems of direct relevance to \emph{Measurement} have recently leveraged factor-graph models and graph-based optimisation \cite{zhang2023factorgraph,huang2025graphpool}.
While prior studies optimise sensor locations for specific instruments or fault-diagnosis objectives \cite{huang2025graphpool,mermer2025entropy}, we provide an \emph{exact} combinatorial baseline, via metric dimension, for the minimum number of monitors, independent of noise models or learning heuristics. 
In addition to identifying the minimum number, we explicitly compute a metric basis for the graph, thereby determining the exact placement of these sensors.

Besides their theoretical interest, our results have direct practical implications. The Cartesian product of star graphs naturally represents various structured sensor network deployments, combining centralized monitoring efficiency with robustness via intermediate relay nodes. The minimal resolving sets explicitly derived here not only minimize sensor deployment and operational costs but also systematically address critical practical issues, including limited communication range, measurement uncertainties, and reliability concerns such as sensor failures or battery depletion. These explicit practical insights substantially extend the value and applicability of our theoretical contributions, bridging the gap between rigorous combinatorial analysis and real-world instrumentation needs.

To formalize this concept, we consider only finite, simple graphs $G = (V, E)$ throughout this paper. The distance between two vertices $u$ and $v$, denoted by $d_G(u, v)$, is the length of the shortest path between them in $G$. For simplicity, we write $d(u, v)$ when no ambiguity arises.

For an ordered subset $W = \{w_1, \ldots, w_k\}$ of $V(G)$ and a vertex $v$ of $G$, the \textit{metric code} or \textit{metric representation} of $v$ with respect to $W$ is defined as
\[ r(v|W) := (d(v, w_1), \ldots, d(v, w_k)). \]
The set $W$ is a \textit{resolving set} for $G$ if distinct vertices of $G$ have different metric representations with respect to $W$. A resolving set $W$ for $G$ with the minimum cardinality is called a \textit{metric basis} of $G$, and its cardinality is the \textit{metric dimension} of $G$, denoted by $\dim(G)$.

The concepts of resolving sets and the metric dimension of a graph were independently introduced by Slater~\cite{Slater1975} and Harary and Melter~\cite{Harary}. Resolving sets have various applications across diverse fields, including coin weighing problems~\cite{Erdos, coin}, network discovery and verification~\cite{net2}, robot navigation~\cite{landmarks}, the mastermind game~\cite{CartesianProduct}, pattern recognition and image processing~\cite{digital}, combinatorial search and optimization~\cite{coin}, unique representation of chemical compounds~\cite{chem, Ollerman, ChemChart}, and error-correcting codes~\cite{code}.

Determining the metric dimension is generally NP-hard~\cite{landmarks}, and it remains NP-complete even for specific families of graphs, such as planar graphs \cite{Diaz}, bipartite graphs, cobipartite graphs, split graphs, and line graphs of bipartite graphs~\cite{Levin}. Researchers have extensively explored the resolving sets of various graph types and network architectures, including complete graphs, paths, trees~\cite{Ollerman}, 2-trees~\cite{2-trees}, generalized fat trees~\cite{FatTree}, star fan graphs~\cite{starfan}, cycles~\cite{ChemChart}, unicycles~\cite{Ollerman}, fans~\cite{caceres2005metric}, wheels~\cite{shanmukha2002metric}, grids~\cite{digital}, Villarceau grids~\cite{VillarceauGrids}, fractal cubic networks~\cite{fractal}, prisms~\cite{javaid2008families}, Petersen graphs~\cite{ahmad2013metric}, Hamming graphs~\cite{tillquist2019low}, Paley graphs~\cite{fijavvz2004rigidity}, Cayley graphs~\cite{fehr2006metric}, Harary graphs~\cite{javaid2008families},  nanocone networks~\cite{carbon}, SiO\textsubscript{2} nanostructures~\cite{Farooq2025}, 
GeSbTe superlattice chemical structures~\cite{Liqin2023}, circulant graphs~\cite{circulant}, and Johnson and Kneser graphs~\cite{bailey2013resolving}. Graphs with $n$ vertices and metric dimension of $n-3$ are characterized in~\cite{n-3}, while randomly $k$-dimensional graphs are studied in~\cite{random}. The metric dimension of the lexicographic product of graphs has been investigated in~\cite{saputro2013metric, lexico}. The corona product has been studied in~\cite{imran2018metric, iswadi2008metric, kuziak2010corrections, kuziak2017computing, yero2011metric}, the strong product in~\cite{rodriguez2015metric, adar2018metric, kuziak2017resolvability}, and the tensor (or direct) product in~\cite{kuziak2017resolvability}. 
In addition to these well-known products, other graph operations such as line graphs have also been considered in the context of metric dimension~\cite{line}. 
Among various graph products, the Cartesian product stands out due to its structural richness, numerous applications, and well-developed theory, particularly in studies related to metric dimension and resolvability \cite{hammack2011handbook}.

The Cartesian product of graphs $G$ and $H$, denoted by $G \square H$, is the graph with vertex set $V(G) \times V(H) := \{(u, v) : u \in V(G), v \in V(H)\}$, where $(u, v)$ is adjacent to $(u', v')$ whenever $u = u'$ and $vv' \in E(H)$, or $v = v'$ and $uu' \in E(G)$. Note that if $G$ and $H$ are both connected, then $G \square H$ is also connected. Moreover, for every $(u, v), (u', v') \in V(G \square H)$, we have $d_{G \square H}((u, v), (u', v')) = d_G(u, u') + d_H(v, v')$. 
One of the earliest results on the metric dimension of Cartesian product graphs, established by Chartrand et al. \cite{Ollerman}, is
$\dim(H)\leq \dim(H\square K_2)\leq \dim(H)+1$.
Later, C\'{a}ceres et al.~\cite{CartesianProduct} advanced this study by determining the metric dimension $G \square H$ for specific graph pairs $G, H \in \{P_n, C_n, K_n\}$. In their investigation, they introduced the concept of the doubly resolving number of a graph, a notion that was further examined in~\cite{doubly}, where graphs with a doubly resolving number of $2$ were fully characterized.

Hernando et al.~\cite{caceres2005metric} showed that for two connected graphs $G$ and $H$, the metric dimension satisfies $$\max\{ \dim(G), \dim(H) \} \leq \dim (G \square H) \leq \min\big\{\dim(G) + | V(H) | , \dim(H) + | V(G) | \big\} - 1.$$ 
They also proved that $\dim (G \square K_n) \leq \dim (G) + n - 2$ for $n \geq 3$. Furthermore, they demonstrated that 
$\dim (G) \leq \dim (G \square P_n) \leq \dim (G) + 1$,
and that
$\dim (G \square C_n) \leq \dim (G) + 1$ 
if $n$ is odd, whereas 
$\dim (G \square C_n) \leq \dim(G) + 2$
if $n$ is even.

There are few results on the exact value of $\dim(G \square H)$ due to the complexity of the problem. These results are mostly limited to the Cartesian product of a few well-known families of graphs. For example, Khuller et al.~\cite{landmarks} showed that $\dim(P_m \square P_n) = 2$, and Hernando et al.~\cite{caceres2005metric} showed that $\dim(C_m \square C_n) = 3$ if $mn$ is odd, and $\dim(C_m \square C_n) = 4$ if $mn$ is even. Additionally, for $m \leq n$, $\dim(K_m \square K_n) = (n - 1)$ if $2(m - 1) < n$, but $\dim(K_m \square K_n) = \left\lfloor \frac{ 2(m+n-1)}{3} \right\rfloor$ otherwise. For $n \geq 3$, $\dim(P_m \square K_n) = (n - 1)$.

In this paper, we investigate the metric dimension of the Cartesian product graph formed by any combination of two star graphs, determining the exact value for two stars of arbitrary sizes. Based on the literature, the best one can conclude for $dim (K_{1,m} \square K_{1,n})$, assuming without loss of generality that $m \leq n$, is that the metric dimension is bounded between $n - 1$ and $m + n - 1$. We go beyond these bounds by providing the precise value of $dim (K_{1,m} \square K_{1,n})$. To present these results accurately, we first need to define some notations, terminology, and tools.

We use the notation $(v_1, v_2, \ldots, v_n)$ for a path graph with $n$ vertices, where $v_i$ is adjacent to $v_{i+1}$ for $1 \leq i \leq n-1$. We denote by $N(v)$ the set of all neighbors of $v$. We refer to the bipartite graph $K_{1, n}$ as a star graph.

One of our tools is the concept of an adjacency resolving set, defined by Jannesari and Omoomi~\cite{lexico}. Let $G$ be a graph and $W = \{w_1, \ldots, w_k\} \subseteq V(G)$. For each vertex $v \in V(G)$, the \textit{adjacency representation} of $v$ with respect to $W$ is the $k$-vector
\[ r_2(v|W) = (a_G(v, w_1), \ldots, a_G(v, w_k)), \]
where $a_G(v, w_i) = \min\{2, d_G(v, w_i)\}$ for $1 \leq i \leq k$. The set $W$ is an \textit{adjacency resolving set} for $G$ if the vectors $r_2(v|W)$ for all $v \in V(G)$ are distinct. The minimum cardinality of an adjacency resolving set is called the \textit{adjacency dimension} of $G$, denoted by $\dim_2(G)$. An adjacency resolving set of cardinality $\dim_2(G)$ is an \textit{adjacency basis} of $G$. A subset $U$ of vertices is adjacency resolved by $S \subseteq V(G)$ if the adjacency representations of all vertices of $U$ with respect to $S$ are different. A straightforward observation is as follows:
\begin{observation}\label{observation}
	$S \subseteq V(G)$ is an adjacency resolving set for $G$ if and only if $N(v) \cap S$ is unique for all $v \in V(G) \setminus S$.
\end{observation}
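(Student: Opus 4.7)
The plan is to unpack the definition of $r_2(v|S)$ according to whether $v$ lies in $S$ or not. For any $w_i\in S$, the $i$-th coordinate $a_G(v,w_i)=\min\{2,d_G(v,w_i)\}$ takes the value $0$ exactly when $v=w_i$, the value $1$ exactly when $w_i\in N(v)$, and the value $2$ otherwise. Hence for $v\in V(G)\setminus S$ the vector $r_2(v|S)$ has entries in $\{1,2\}$, and it encodes precisely the indicator function of $N(v)\cap S$; while for $v\in S$, the vector has a $0$ in the position of $v$ and nowhere else.

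First I would handle, almost for free, the ``automatic'' distinctions. If $v\in S$ and $v'\in V(G)\setminus S$, then the coordinate of $r_2(v|S)$ at position $v$ is $0$, while the corresponding coordinate of $r_2(v'|S)$ is either $1$ or $2$; thus $r_2(v|S)\neq r_2(v'|S)$. If $v,v'\in S$ with $v\neq v'$, comparing the coordinate at position $v$ yields $0$ for $r_2(v|S)$ and a nonzero value for $r_2(v'|S)$, so these representations also differ. Consequently, vertices of $S$ never cause a collision.

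This reduces the adjacency resolving condition to: distinct vertices $v,v'\in V(G)\setminus S$ have distinct representations. By the first paragraph, $r_2(v|S)$ and $r_2(v'|S)$ are just the indicator vectors of $N(v)\cap S$ and $N(v')\cap S$ respectively, so they are distinct if and only if $N(v)\cap S\neq N(v')\cap S$. Combining both directions gives the ``if and only if'' claim of the observation.

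There is no real obstacle here; the statement is a direct re-encoding of the definition of $a_G$, and the only mild subtlety is remembering to check that vertices inside $S$ never collide with anything, which is why the uniqueness condition needs to be imposed only on $V(G)\setminus S$ rather than on all of $V(G)$.
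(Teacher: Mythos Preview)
Your argument is correct. The paper itself gives no proof of this observation, treating it as immediate from the definitions; your unpacking of $a_G(v,w_i)\in\{0,1,2\}$ and the handling of the automatic distinctions involving vertices of $S$ is exactly the natural verification, so there is nothing to compare.
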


\section{Results}\label{Sec:Results}

Let $G := K_{1,m} \square K_{1,n}$ be the Cartesian product of two stars with $m$ and $n$ leaves, respectively. Throughout this paper we assume, without loss of generality, that $m\leq n$. Our aim is to determine the exact value of the metric dimension of $G$ for all values of $n$ and $m$, 
by analysing the following three ranges separately:
\[
\text{(i) } m=1,\hspace*{2cm}
\text{(ii) } 2\leq m<\frac{n}{2},\hspace*{2cm}
\text{(iii) } \frac{n}{2}\leq m\leq n.
\]
The corresponding results are stated  in Theorems \ref{thm:SmallCases}, \ref{2<m<n/2}, and \ref{n/2<m}. 
Before turning to the proofs we fix the notation used throughout and record a preliminary observation that will streamline later arguments.

\subsection{Basic notations and grid view}

First, observe that there is a unique vertex in $G$ with degree $m+n$, denoted by $a_{0,0}$. Let $N := \{c_1, \ldots, c_n\}$ be the set of vertices in $G$ with degree $m+1$, and let $M := \{r_1, \ldots, r_m\}$ be the set of vertices in $G$ with degree $n+1$. This arrangement allows us to represent $G$ as a grid, illustrated in Figure~\ref{fig:Graph}. Note that any other vertex in $G$ has degree two and is denoted by $a_{i,j}$, corresponding to its row and column indices. For convenience, let $A:=\{a_{i,j} \mid 1\leq i\leq m, 1\leq j\leq n\}$. We define the $i$th row and the $j$th column of $G$ as  $R_i:=\{r_i\}\cup\{a_{i,j} \mid 1\leq j\leq n\}$ and $C_j:=\{c_j\}\cup\{a_{i,j} \mid 1\leq i\leq m\}$, respectively.

\begin{figure}[h]
	\centering
	\begin{tikzpicture}
		\fill[gray!20] (0,0) rectangle (8,6); 
		\node at (4,3) {
			\includegraphics[width=0.45\linewidth]{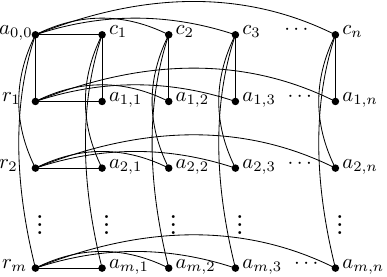}
		};
	\end{tikzpicture}
	\caption{A grid representation of $ K_{1,m} \square K_{1,n}.$}
	\label{fig:Graph}
\end{figure}

\begin{lemma}
	There exists a basis of $ K_{1,m} \square K_{1,n}$ that does not contain $ a_{0,0} $.
\end{lemma}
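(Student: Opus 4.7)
Suppose $W$ is a basis of $G$ with $a_{0,0}\in W$; the goal is to swap $a_{0,0}$ for another vertex and still obtain a basis.

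The key first observation is that $a_{0,0}$ has distance $1$ to every vertex of $M\cup N$ and distance $2$ to every vertex of $A$. Hence $a_{0,0}$ distinguishes two vertices only when they lie in different classes of the partition $\{a_{0,0}\},\ M\cup N,\ A$. Consequently $W':=W\setminus\{a_{0,0}\}$ already resolves every pair whose endpoints lie in the same class, and the only pairs potentially unresolved by $W'$ are of the form $(a_{0,0},u)$ with $u\neq a_{0,0}$, or $(u,v)$ with $u\in M\cup N$ and $v\in A$.

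I would next take $v^{\star}=a_{p,q}$ for some $a_{p,q}\notin W$. A direct distance computation shows that $a_{p,q}$ has distances in $\{1,3\}$ to every vertex of $M\cup N$ and distances in $\{0,2,4\}$ to every vertex of $A$; since these two sets of values are disjoint, $v^{\star}$ resolves every pair $(u,v)$ with $u\in M\cup N$ and $v\in A$. Moreover $v^{\star}$ resolves $(a_{0,0},u)$ for every $u\neq a_{0,0}$, \emph{except} when $u\in\{a_{p,j}:j\neq q\}\cup\{a_{i,q}:i\neq p\}$, since in that case $d(v^{\star},u)=2=d(v^{\star},a_{0,0})$. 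Thus the task reduces to selecting $(p,q)$ so that these residual pairs are already resolved by $W'$.

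The main obstacle is the choice of $(p,q)$, which I would handle by splitting on the sizes of $M\cap W$ and $N\cap W$. If $|M\cap W|\geq 2$ or $|N\cap W|\geq 2$, then any $a_{p,q}\notin W$ works: the second leaf always yields some $r_{i'}\in W$ with $i'\neq p$ or some $c_{j'}\in W$ with $j'\neq q$, which separates $a_{0,0}$ (distance $1$) from any $a_{p,j}$ or $a_{i,q}$ (distance $3$), handling every residual pair. Otherwise $|M\cap W|\leq 1$ and $|N\cap W|\leq 1$; then because $W$ must resolve the pairwise-equidistant sets $M$ and $N$ almost entirely via $A$-vertices, $W\cap A$ is forced to cover at least $m-2$ distinct rows and at least $n-2$ distinct columns. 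A short case analysis exploiting this structural richness together with the location of the at most two leaves in $W$ produces a suitable $(p,q)$, using the distance computation $d(a_{p,j},a_{p',q'})=4\neq 2=d(a_{0,0},a_{p',q'})$ whenever $p'\neq p$ and $q'\neq j$. The degenerate case $m=1$ can be handled directly by taking $v^{\star}=a_{1,j_{*}}$, where $j_{*}$ is either the unique column with $c_{j_{*}}\in W$ or the unique column not covered by $W\cap A$.
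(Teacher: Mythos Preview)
Your parity observation is exactly the engine the paper uses, and the reduction to the residual pairs $(a_{0,0},a_{p,j})$ and $(a_{0,0},a_{i,q})$ is correct. The genuine gap is the ``short case analysis'' you defer when $|M\cap W|\le 1$ and $|N\cap W|\le 1$: knowing that $W\cap A$ meets at least $m-2$ rows and $n-2$ columns does not by itself produce a good $(p,q)$, because for an unlucky choice one can still have $W'\subseteq R_p\cup C_j$ for some $j\neq q$, and that is precisely the configuration leaving $(a_{0,0},a_{p,j})$ unresolved. Pinning down a $(p,q)$ that avoids this simultaneously for all $j\neq q$ and all $i\neq p$ (and also verifying $a_{p,q}\notin W$, which you skip in the $m=1$ sketch) is the actual content of your route and is not carried out. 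Your easy case is also stated a bit loosely: a single $r_{i'}$ with $i'\neq p$ does \emph{not} handle the pair $(a_{0,0},a_{i',q})$, so one genuinely needs both leaves in $M\cap W$ and must select the witness per pair.

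The paper sidesteps the search for $(p,q)$ entirely. It sets $B^*=B\setminus\{a_{0,0}\}$, takes any pair $x,y$ unresolved by $B^*$, and applies the parity argument to an \emph{arbitrary} $w\in B^*$ rather than to a freshly inserted $a_{p,q}$: if $a_{0,0}\notin\{x,y\}$ then one of $x,y$ lies in $M\cup N$ and the other in $A$, so every $w\in B^*$ already separates them, a contradiction. Hence every unresolved pair contains $a_{0,0}$, and the partner $y$ is unique (a second partner $z$ would give the forbidden $a_{0,0}$-free unresolved pair $\{y,z\}$). Swapping $a_{0,0}$ for this unique $y$ yields the desired basis with no case analysis at all. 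The idea you are missing is this uniqueness step; once you have it, the correct replacement is simply $y$, and no constructive choice of $(p,q)$ is needed.
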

\begin{proof}
	Let $B$ be a basis of $G:= K_{1,m} \square K_{1,n}$ containing the vertex $a_{0,0}$. Define $B^*:= B\setminus \{a_{0,0}\}$. Since $B^*$ is not a resolving set, there exist two distinct vertices, say $x$ and $y$, such that
	\begin{align}
		r(x | B^*) &= r(y | B^*) \text{, and} \label{eq:first} \\
		d(a_{0,0}, x) &\neq d(a_{0,0}, y). \label{eq:second}
	\end{align}
	Since $d(a_{0,0}, a_{i,j}) = 2$ and $d(a_{0,0}, r_i) = d(a_{0,0}, c_j) = 1$ for every $i,j \geq 1$, we have $\{x, y\} \not\subseteq M \cup N$ and $\{x, y\} \not\subseteq A $. Therefore, by Equation \eqref{eq:second}, there are two possibilities for $x$ and $y$.
	
	\textbf{Case 1:} $a_{0,0} \not\in \{x, y\}$. As one of the vertices $x$ and $y$ belongs to $A$ and the other is in $M \cup N$, without loss of generality, assume that $x \in M \cup N$ and $y \in A$. Since $G$ is not a path graph, we have $|B| \geq 2$, and hence, $B^* \neq \emptyset$.
	
	Suppose $B^*$ contains a vertex of the form $a_{i,j} \in A$. Consider the distance between $a_{i,j}$ and $y \in A$. If $a_{i,j}$ and $y$ lie in a same row or column, then $d(a_{i,j}, y)=2$; otherwise if they lie in different rows and columns, we have $d(a_{i,j}, y)=4$.  Now consider the distance between $a_{i,j}$ and $x\in M\cup N$. If $x$ and $a_{i,j}$ lie in a same column, then $d(a_{i,j}, x)=1$; otherwise, the distance is $d(a_{i,j}, x)=3$.  Therefore, 
	$d(a_{i,j}, y) \in \{2, 4\}$ while $d(a_{i,j}, x) \in \{1, 3\}$,  meaning that $x$ and $y$ are resolved by $a_{i,j} \in B^*$,  contradicting Equation \eqref{eq:first}. 
	
	Thus, we may assume $B^*$ contains a vertex from $M \cup N$, say $w$.  Since $w\in M\cup N$ and $y\in A$, it follows that $d(w, y) \in \{1, 3\}$,  depending on whether $w$ and $y$ are in the same column or not. On the other hand, $d(w, x) \in \{0, 2\}$,  since $x\in M\cup N$. Again  $x$ and $y$ are resolved by $w$, which contradicts Equation \eqref{eq:first}.  This completes Case 1.
	
	\textbf{Case 2:} $a_{0,0} \in \{x, y\}$. Without loss of generality, assume $a_{0,0} = x$.  If $a_{0,0}$ and $y$ are the only two vertices with the same representation with respect to $B^*$, then  the set $B^* \cup \{y\}$ forms a resolving set of $G$ with the same cardinality as $B$, and hence it is also a basis of $G$.  Since this basis excludes $a_{0,0}$, the claim follows.
	
	Otherwise, there is a third vertex $z\not\in\{a_{0,0}, y\}$ such that $r(z|B^*)=r(y|B^*)=r(a_{0,0}|B^*)$. In this case , the pair $y$ and $z$ fall under Case 1, completing the argument.
\end{proof}

In what follows, we investigate the metric dimension of the Cartesian product graphs $ K_{1,m} \square K_{1,n}$, focusing on specific cases that help to better understand the overall behavior of the metric dimension of these graphs. 
Given that $m \leq n$ is always assumed in our analysis, we begin by addressing the simplest case, where $m = 1$ and $n$ is arbitrary. This provides a foundation for understanding how the dimension behaves when the smaller part of the graph is minimal. From there, we extend our analysis to cases where $m$ takes larger values, specifically when $2 \leq m < \frac{n}{2}$, providing a more general result that covers a wider range of possibilities for $m$ and $n$.

\subsection{Case $m=1$}\label{Subsec:m1}
We start with the extremal situation where the smaller factor is very small.  This case is conceptually simple yet already exhibits the flavour of the arguments used later.

\begin{theorem}\label{thm:SmallCases}
	For the graph $ K_{1,1} \square K_{1,n}$, we have
	\[
	\dim( K_{1,1} \square K_{1,n}) = 
	\begin{cases}
		2 & \text{if } n = 1, \\
		n & \text{if } 2 \leq n \leq 4, \\
		n - 1 & \text{if } n \geq 5.
	\end{cases}
	\]
\end{theorem}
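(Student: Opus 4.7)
The plan is to handle the three sub-cases separately, using the fact that for $m=1$ each column $C_j=\{c_j,a_{1,j}\}$ has size two, and the only vertices outside the columns are the two apex vertices $a_{0,0}$ and $r_1$. The base case $n=1$ is immediate, since $K_{1,1}\square K_{1,1}\cong C_4$ and $\dim(C_4)=2$, which matches the claimed value.

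For the generic lower bound $\dim(G)\geq n-1$ when $n\geq 2$, I would first establish a ``column-separation'' property: for any two distinct columns $C_j,C_k$ and any vertex $w\notin C_j\cup C_k$, the distance formula gives $d(w,c_j)=d(w,c_k)$ and $d(w,a_{1,j})=d(w,a_{1,k})$. Consequently, a resolving set $W$ must meet $C_j\cup C_k$ for every pair $j\ne k$, so $W$ misses at most one column. Since $a_{0,0}$ and $r_1$ lie in no column, this forces $|W|\geq n-1$, with equality only when $W$ consists of exactly one vertex from each of some $n-1$ columns and contains neither apex.

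To upgrade this to $\dim(G)\geq n$ for $2\leq n\leq 4$, I would assume $|W|=n-1$ and, without loss of generality, that $W$ omits $C_n$, picking one element from each $C_j$ with $j<n$. Writing $C^*=\{j:c_j\in W\}$ and $A^*=\{j:a_{1,j}\in W\}$, a direct computation of $r(a_{0,0}|W)$, $r(r_1|W)$, $r(c_n|W)$, $r(a_{1,n}|W)$ together with the codes of the in-column partners shows that $W$ fails to resolve whenever $\min(|C^*|,|A^*|)\leq 1$: the degenerate cases force collisions such as $r(r_1|W)=r(c_n|W)$ when $A^*=\emptyset$, $r(a_{0,0}|W)=r(a_{1,n}|W)$ when $C^*=\emptyset$, a partner $a_{1,j}$ matching $a_{0,0}$ when $C^*=\{j\}$, and a partner $c_j$ matching $r_1$ when $A^*=\{j\}$. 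Hence $W$ can resolve only if $|C^*|\geq 2$ and $|A^*|\geq 2$, forcing $n-1\geq 4$, contradicting $n\leq 4$. The matching upper bound is realised by the explicit set $W=\{c_1,\ldots,c_n\}$, whose codes are easy to enumerate and pairwise distinguish every vertex of $G$.

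For $n\geq 5$ the upper bound $\dim(G)\leq n-1$ is witnessed by the mixed set $W=\{c_1,c_2,a_{1,3},a_{1,4},\ldots,a_{1,n-1}\}$, which has $|C^*|=2$ and $|A^*|=n-3\geq 2$; a uniform computation shows that the codes of all other vertices---the two apices, the two vertices $c_n,a_{1,n}$ of the omitted column, and the $n-1$ in-column partners---are pairwise distinct. The main obstacle I anticipate is the lower bound in the range $2\leq n\leq 4$, where one must rule out \emph{every} $W$ of size $n-1$, not merely the ``symmetric'' ones; the rigidity forced by column-separation, combined with the case analysis on $|C^*|$ and $|A^*|$, is exactly what makes this analysis tractable.
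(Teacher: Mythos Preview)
Your argument is correct. The upper-bound construction for $n\ge 5$ is exactly the set the paper uses, and your $n=1$ case is identical. The differences lie in the lower bounds: the paper obtains $\dim(G)\in\{n-1,n\}$ by quoting the known inequality $\dim(H)\le\dim(H\square K_2)\le\dim(H)+1$ and then dismisses the range $2\le n\le 4$ as ``straightforward to check'', whereas you give a self-contained column-separation argument for $\dim(G)\ge n-1$ and a clean structural reason for the jump to $n$ when $n\le 4$ (any size-$(n-1)$ candidate must satisfy $|C^*|\ge 2$ and $|A^*|\ge 2$, forcing $n\ge 5$). Your route is slightly longer but more informative: it explains \emph{why} the threshold sits at $n=5$ rather than merely verifying the three small cases. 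The paper's route is shorter because it leans on a cited bound; both are valid and lead to the same explicit resolving sets.
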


\begin{proof}
	Let $G:= K_{1,1} \square K_{1,n}$. If $n = 1$, then $G$ is a cycle of length $4$, and since $\dim(C_4) = 2$, we have $\dim(G) = 2$. 
	
	If $n \geq 2$, we know that $\dim(K_{1,n}) = n - 1$. Moreover, as mentioned in the Introduction, by a result in~\cite{Ollerman}, for any graph $H$, $\dim(H) \leq \dim(H \square K_2) \leq \dim(H) + 1$. Therefore, $\dim(G) \in \{n - 1, n\}$.
	
	For the small cases $2 \leq n \leq 4$, it is straightforward to check that $\dim(G) = n$. Thus, we assume that $n \geq 5$.
	
	We claim that the set $S = \{c_1, c_2, a_{1,3}, a_{1,4}, \ldots, a_{1,n-1}\}$ is a resolving set of size $n - 1$ for $G$. To prove this, it suffices to show that the metric representation of each vertex in $V(G) \setminus S$ is unique.
	
	First, consider $a_{0,0}$, which is the only vertex whose metric code (with respect to $S$) starts with two 1s. Similarly, $r_1$ is the only vertex whose metric code from the third to the $(n-1)$th position consists entirely of 1s. Next, the vertices $a_{1,1}$, $a_{1,2}$, and $a_{1,n-1}$ are the only ones whose metric codes begin with $(1,3)$, $(3,1)$, and $(3,3)$, respectively.
	
	Now, it remains to show that the metric code of each $c_i$ (for $3 \leq i \leq n$) is unique. For each $3 \leq j \leq n$, $c_j$ is the only vertex among the $c_i$'s, where $d(c_j, a_{1,j}) = 1$. Hence, the metric code of each vertex is unique, and the proof is complete.
\end{proof}

\subsection{Case $2\leq m<\frac{n}{2}$}\label{Subsec:SlimRange}
In this range the smaller star is strictly less than half the size of the larger one.  The following theorem shows that almost every column must contribute a landmark.

\begin{theorem}\label{2<m<n/2}
	If $2 \leq m < \frac{n}{2}$, then $\dim( K_{1,m} \square K_{1,n}) = n-1$.
\end{theorem}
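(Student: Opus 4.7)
The plan is to prove the stated equality by separately establishing matching lower and upper bounds. The lower bound is immediate from the literature: the Cartesian-product inequality $\dim(G\square H)\ge\max\{\dim(G),\dim(H)\}$ of Hernando et al., combined with $\dim(K_{1,n})=n-1$ and the standing assumption $m\le n$, yields $\dim(K_{1,m}\square K_{1,n})\ge n-1$ without further work.

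For the upper bound I would exhibit a resolving set of size $n-1$. Setting $q:=n-1-2m$, which is nonnegative because $m<n/2$, take
\[
S:=\{c_1,c_2,\ldots,c_q\}\cup\bigcup_{i=1}^{m}\{a_{i,\,q+2i-1},\;a_{i,\,q+2i}\}.
\]
The cardinality is $q+2m=n-1$; the $q$ hub landmarks cover the first $q$ columns, each row $i$ receives exactly two degree-two landmarks in the consecutive columns $q+2i-1$ and $q+2i$, and column $n$ is the unique uncovered column. To verify that $S$ resolves $G$, I would partition $V(G)\setminus S$ into the families $\{a_{0,0}\}$, $\{r_1,\ldots,r_m\}$, $\{c_{q+1},\ldots,c_n\}$, and the residual $a_{i,j}$'s (further split by whether $j\le q$, $q<j\le q+2m$, or $j=n$), and read each code directly from the distance table. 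The salient fingerprints are: $a_{0,0}$ is the unique vertex with $1$'s throughout the $c$-block; each $r_i$ has $2$'s in the $c$-block together with two $1$'s at the row-$i$ landmark positions; each $c_k$ with $k>q$ has all $2$'s in the $c$-block and at most one $1$ in the $a$-block (none for $k=n$); and each $a_{i,j}\notin S$ has $2$'s at both row-$i$ landmark positions, optionally one further $2$ contributed by the column-$j$ landmark, and a single $1$ in the $c$-block precisely when $j\le q$. These fingerprints are pairwise distinct by inspection across the case partition.

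The subtle point, which I expect to be the main obstacle, is a symmetric-swap obstruction that is precisely the reason for placing \emph{two} $a$-landmarks in each row rather than one. If some row $i$ contained only a single landmark $a_{i,f(i)}$, then $c_{f(i)}$ and $r_i$ would each display a solitary $1$ at that coordinate together with $2$'s throughout the $c$-block, collapsing onto the same code; analogously, if two rows $i\ne i'$ each carried just one $a$-landmark, the swap vertices $a_{i,f(i')}$ and $a_{i',f(i)}$ would coincide, since both would show $2$ at the pair of landmark positions and $4$ elsewhere in the $a$-block while sharing identical all-$3$ $c$-blocks. Doubling the row landmarks kills both collisions simultaneously: the row index of any off-basis $a$-vertex is now signalled by a \emph{pair} of $2$'s in the $a$-block that no column hub can reproduce. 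The hypothesis $m<n/2$ enters the argument exactly through $q\ge 0$, guaranteeing that the construction is feasible throughout the range stated in the theorem.
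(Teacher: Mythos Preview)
Your proof is correct and follows essentially the same strategy as the paper: the lower bound is the same two-missing-columns obstruction (you invoke it via the Hernando et al.\ inequality and $\dim(K_{1,n})=n-1$, while the paper argues it directly), and the upper-bound construction is built on the identical idea of placing two $a$-landmarks in each row and covering every column but one. The only real difference is cosmetic: for the $n-1-2m$ ``extra'' columns the paper fills with last-row vertices $a_{m,j}$ whereas you use the column hubs $c_j$; both choices work, and your case analysis goes through (with the small caveat that when $q\le 1$ the fingerprint ``all $1$'s in the $c$-block'' no longer isolates $a_{0,0}$ by itself, but the $a$-block, which has at least four entries, still separates it from every $a_{i,j}$).
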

\begin{proof}
	Let $G: = K_{1,m} \square K_{1,n}$.
	Note that  $\dim(G) \geq n-1$, otherwise there exists a resolving set  $S$  for $G$ that does not intersect two columns of $G$. If $x,y$ are two distinct vertices in these two columns that are in the same row of $G$, then these two vertices do not resolved by any member of $S$, which is impossible.
	
	Now define 
	$$B := \{a_{1,1}, a_{1,2}, a_{2,3}, a_{2,4}, \ldots, a_{m,2m-1}, a_{m,2m}\} \cup\{a_{m,2m+1}, \ldots, a_{m, n-1}\}.$$
	This set is constructed by selecting two consecutive vertices from each of the $m$ rows, followed (when $n > 2m+1$) by one vertex per column in the remaining columns, all taken from the last row. Note that the last column is excluded. This guarantees that  $|B| = n-1$, each row intersects $B$ in at least two vertices, and every column except the last one intersects $B$.
	We claim that $B$ is a resolving set for $G$. To prove this claim, note that $r_i, 1 \leq i \leq m$, is the only vertex of $G$ where the $(2i-1)$th and $(2i)$th entries of $r(r_i|B)$ are $1$, hence $r(r_i|B)$ is unique. Similarly, for every $j, 1 \leq j \leq n-1$, the metric representation of $c_j$ is unique. Also, $c_n$ is the only vertex of $G$ with a metric representation $(3, 3, \ldots, 3)$, hence the metric representations of the $c_j$'s are unique.
	
	Now let $r(a_{i,j}|B) = r(a_{r,s}|B)$. Note that $r = i$, because the vertices in $A\cap R_i$ are the only vertices with $2$ in the $(2i-1)$th and $2i$th entries of their representations. If $s = n$, then just two entries of $r(a_{r,s}|B)$ are $2$ and the others are $4$. Since vertices in $A\cap C_n$ are the only vertices with this property, $a_{r,s}$ and $a_{i,j}$ are in the $n$th column. If $s \neq n$, then its metric representation has exactly three $2$'s and the other entries are $4$. Clearly, the entries in positions $2i$ and $2i-1$ are $2$, and the third $2$ in the metric representation of $a_{r,s}$ and $a_{i,j}$ specifies the column of these vertices. Since their metric representations are equal, their columns are the same. Therefore, the metric representations of all $a_{i,j}$, $i, j > 0$, are unique. On the other hand, all entries of $r(a_{0,0}|B)$ are $2$ while at most three entries of $r(a_{i,j}|B)$ are $2$.
	
	Since $2 \leq m \leq \frac{n}{2}$, we have $n \geq 5$ and $|B| = n-1 \geq 4$. Therefore, the metric representation of $a_{0,0}$ is unique, and we are done.
\end{proof}

\subsection{Case $\frac{n}{2} \leq m\leq n$}
Theorems \ref{thm:SmallCases} and \ref{2<m<n/2}  settle every instance with $ m <\frac{n}{2}$.
To complete the analysis we investigate the complementary range $\frac{n}{2} \leq m \leq n$. The main result in this regime is the following theorem.

\begin{theorem}\label{n/2<m}
	If ${n\over2}\leq m\leq n$, then $\dim(K_{1,m}\square K_{1,n})=n+\lfloor{2m-n\over 3}\rfloor$.
\end{theorem}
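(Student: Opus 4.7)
The plan is to prove the theorem by establishing matching upper and lower bounds of $n+k$, where $k:=\lfloor(2m-n)/3\rfloor$.

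For the upper bound I would construct an explicit resolving set $B$ of size $n+k$. Building on the template of Theorem \ref{2<m<n/2}, I would place two landmarks in consecutive columns $a_{i,2i-1}, a_{i,2i}$ in certain \emph{doubled} rows, a single landmark in each of the remaining rows, and possibly additional landmarks in $M\cup N$ or in already-covered rows to absorb leftover columns; the precise pattern depends on $2m-n \bmod 3$. Resolvability would be verified by a four-case analysis mirroring Theorem \ref{2<m<n/2}: same-row and same-column pairs are distinguished because at most one column and at most one row remains empty; cross pairs inside $A$ are distinguished because each such pair is met either by a doubled row (which contributes a landmark in a column outside the pair) or by a doubly-covered column; and pairs involving $a_{0,0}$ or the hubs $r_i, c_j$ are distinguished by the characteristic small-entry patterns of their metric codes.

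For the lower bound I would let $B$ be any resolving set, assume (by the preceding lemma) that $a_{0,0}\notin B$, and work with the row-counts $\rho_i:=|B\cap R_i|$ and column-counts $\sigma_j:=|B\cap C_j|$. Step 1 adapts Theorem \ref{2<m<n/2}: at most one $\rho_i$ is $0$ and at most one $\sigma_j$ is $0$. Step 2 characterises unresolved cross pairs: a pair $(a_{i,k}, a_{p,j})$ with $i\neq p$ and $j\neq k$ fails to be resolved precisely when no landmark $a_{s,t}\in B$ has exactly one of $s\in\{i,p\}$, $t\in\{j,k\}$ holding, neither $a_{i,k}$ nor $a_{p,j}$ is a landmark, and none of $r_i, r_p, c_j, c_k$ lies in $B$. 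Step 3 is a packing argument: three rows that are each \emph{singleton} ($\rho_i=1$ with no hub $r_i$ present) and whose sole landmarks occupy an isolated column triple always admit an unresolved cross pair unless at least one extra landmark is added. Iterating this over a maximum disjoint collection of such singleton triples converts the surplus $2m-n$ (rows beyond those strictly needed for column coverage) into the deficit $k=\lfloor(2m-n)/3\rfloor$ in $|B|$, which combined with Step~1 gives $|B|\geq n+k$.

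The main obstacle will be formalising Step~3. Its $2/3$-rate parallels the classical triples counting used by C\'aceres et al.~\cite{CartesianProduct} in analysing $\dim(K_m\square K_n)$, but here we must additionally argue that hub landmarks in $M\cup N$ cannot economically substitute for the inner landmarks needed to resolve cross pairs. The delicate point is that each hub $r_i$ or $c_j$ can contribute to at most a single row or column worth of cross-pair coverage, whereas an inner landmark $a_{s,t}$ in a doubled row serves double duty by simultaneously resolving many cross pairs and covering a column. A careful charging argument, assigning an amortised cost of four landmarks per triple of singleton rows, is likely the cleanest way to make the bound tight.
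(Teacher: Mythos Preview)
Your outline diverges substantially from the paper's argument. The paper does not run separate upper and lower bounds with a packing scheme; instead, for $5\le m\le n$ it introduces an auxiliary bipartite graph $H=H(G,B)$ on $(M\cup N)\cup B'$, where each $a_{i,j}\in B$ contributes a vertex $a'_{i,j}$ adjacent to $r_i$ and $c_j$. Lemmas~\ref{B' is adjacency resolving set} and~\ref{dim(G)<|S|} show that resolving $G$ is essentially equivalent to $B'$ adjacency-resolving $M\cup N$ in $H$. A chain of exchange arguments (Lemmas~\ref{P has no leaf in B'}--\ref{at most 2 single edges and isolated vertices}, Theorem~\ref{Thm:1or4}) then proves that \emph{some} basis $B$ can be chosen so that $H(G,B)$ is a disjoint union of paths of order~$5$ together with at most two stray pieces (a single edge or an isolated vertex). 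Since each $P_5$ carries two vertices of $B'$ and three of $M\cup N$, a direct count on $|M\cup N|=m+n$ gives $|B|=n+\lfloor(2m-n)/3\rfloor$ simultaneously as upper and lower bound. The residual cases $\min\{m,n\}\le 4$ are handled separately in Propositions~\ref{prop:small_casesA} and~\ref{prop:small_casesB}.

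Your Step~3 is where the difficulty concentrates, and as stated it is not yet an argument. The local observation is correct: if rows $1,2,3$ and columns $1,2,3$ each meet $B$ only in $a_{1,1},a_{2,2},a_{3,3}$, then for instance $a_{1,2}$ and $a_{2,1}$ collide. But converting this into the global inequality $|B|\ge n+\lfloor(2m-n)/3\rfloor$ requires controlling how singleton rows interact with doubled rows, with columns that are themselves multiply covered, and with hub landmarks $r_i,c_j$; your ``maximum disjoint collection of singleton triples'' does not address this, because a singleton row whose landmark lies in a column shared with a doubled row is already resolved and contributes no deficit. More fundamentally, the quantity $2m-n$ is not really a ``surplus of rows'' here: the formula $n+\lfloor(2m-n)/3\rfloor$ is just $\lfloor 2(m+n)/3\rfloor$ rewritten, and the $1/3$ arises from the $2{:}3$ landmark-to-$(M\cup N)$ ratio of a $P_5$, not from grouping rows into triples. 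The paper's normalization sidesteps the charging problem entirely by transforming a given basis rather than bounding an arbitrary one; if you push your route through, you will most likely end up reconstructing the $P_5$ decomposition implicitly.
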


The proof of Theorem~\ref{n/2<m} is given after a sequence of preparatory lemmas that clarify how the relationship between $m$ and $n$ in this specific range affects the metric dimension of the product graph.  In this regime, we also employ an auxiliary bipartite graph that instead of encoding distances within $K_{1,m}\square K_{1,n}$, captures adjacency relations in the new structure.

Let $B$ be a subset of $V(G)$, where  $G:=K_{1,m}\square K_{1,n}$. We define  an auxiliary graph $H := H(G, B)$ with vertex set $V(H) := M \cup N \cup B'$, where $B' := \{b' \mid b \in B\}$ is a  disjoint copy of $B$.  For each vertex $a_{i,j} \in B$,  we add edges $r_i a'_{i,j} \in E(H)$ and $c_j a'_{i,j} \in E(H)$. If $c_i \in B$, then $c_i c'_i \in E(H)$,  and similarly, if $r_i \in B$,  we include the edge $r_i r'_i \in E(H)$.

It is clear that $H$ is a bipartite graph with partite sets $B'$ and $M \cup N$. The degree of every vertex in $B'$ is uniquely determined: the degree of all vertices in $\{x' \in V(H) \mid x \in (M \cup N) \cap B\}$ is $1$, while  all other vertices in $B'$ have degree $2$. A vertex in $M \cup N$  has degree zero if no element of $B$ lies in the corresponding row or column; otherwise,  its degree equals the number of such elements plus one.

The following lemma reveals a key property of this auxiliary structure:

\begin{lemma}\label{B' is adjacency resolving set}
	If $B$ is a resolving set for  $K_{1,m}\square K_{1,n}$, then every pair of vertices in $M \cup N$ have different adjacency representations in  $H := H(K_{1,m}\square K_{1,n}, B)$ with respect to $B'$.
\end{lemma}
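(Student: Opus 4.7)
The plan is to prove the contrapositive: assuming two distinct vertices $x,y\in M\cup N$ have the same adjacency representation in $H$ with respect to $B'$, I will show that $d_G(x,b)=d_G(y,b)$ for every $b\in B$, contradicting the hypothesis that $B$ resolves $G=K_{1,m}\square K_{1,n}$. Since $M\cup N$ and $B'$ lie in different partite classes of the bipartite graph $H$, Observation~\ref{observation} reduces the hypothesis to $N_H(x)\cap B'=N_H(y)\cap B'$. The preparatory step is to read off from the construction of $H$ what $N_H(v)\cap B'$ looks like for $v\in M\cup N$: for $v=r_i$ it equals $\{a'_{i,l}:a_{i,l}\in B\}$ together with $\{r'_i\}$ if $r_i\in B$, and symmetrically for $v=c_j$. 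A useful auxiliary observation is that each $r'_i\in B'$ has $r_i$ as its only $H$-neighbor, each $c'_j\in B'$ has $c_j$ as its only $H$-neighbor, and each $a'_{i,l}\in B'$ has exactly $\{r_i,c_l\}$ as its neighbors.

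I would then split into three cases. In the case $x=r_i$, $y=r_j$ with $i\neq j$, equality of the two neighborhoods forces $r_i,r_j\notin B$ (otherwise $r'_i$ or $r'_j$ would lie in exactly one of them) and forbids $a_{k,l}\in B$ with $k\in\{i,j\}$ (otherwise $a'_{k,l}$ would lie in exactly one of them). The Cartesian distance formula $d_G=d_{K_{1,m}}+d_{K_{1,n}}$ then shows the common value $d_G(r_i,b)=d_G(r_j,b)$ is $1$ when $b=a_{0,0}$, $2$ when $b\in M\cup N$, and $3$ when $b\in A$. The case $x=c_i$, $y=c_j$ is symmetric, with rows and columns exchanged.

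The case I expect to be the main obstacle is $x=r_i$, $y=c_j$, because the row/column asymmetry prevents the clean type-matching used above. Here the two neighborhoods are $\{r'_i\}\cap B'\cup\{a'_{i,l}:a_{i,l}\in B\}$ and $\{c'_j\}\cap B'\cup\{a'_{k,j}:a_{k,j}\in B\}$. The key deduction is that their equality forces $r_i,c_j\notin B$ and rules out every candidate in $(R_i\cup C_j)\cap A$ except possibly $a_{i,j}$: any $a_{i,l}\in B$ with $l\neq j$ would place $a'_{i,l}$ in the first set but not the second (since the neighbors of $a'_{i,l}$ are $r_i$ and $c_l\neq c_j$), and symmetrically $a_{k,j}\in B$ with $k\neq i$ is forbidden. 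Once this structural restriction on $B$ is pinned down, a direct distance computation gives $d_G(r_i,b)=d_G(c_j,b)$ for every $b\in B$: the distances agree on $a_{0,0}$, on each element of $M\cup N\setminus\{r_i,c_j\}$, on $a_{i,j}$ if it happens to lie in $B$, and on any remaining $a_{k,l}\in B$ with $k\neq i$ and $l\neq j$. This yields the required contradiction in all cases and completes the plan.
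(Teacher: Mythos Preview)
Your proof is correct and rests on the same correspondence the paper uses, but you run it in the contrapositive direction with a heavier case split. The paper argues directly: given $x,y\in M\cup N$, pick $b\in B$ with $d_G(x,b)\neq d_G(y,b)$; if $b\in M\cup N$ then $b\in\{x,y\}$ (any two distinct vertices of $M\cup N$ are at distance~$2$), so $a_H(x,b')=0\neq a_H(y,b')$; if $b=a_{i,j}\in A$, then $b$ shares a row or column with exactly one of $x,y$, whence $a_H(x,b')\neq a_H(y,b')$. This avoids the three-way case analysis on the types of $x$ and $y$ and the exhaustive distance check over all possible $b$. Your argument is perfectly valid, but note that your appeal to Observation~\ref{observation} is slightly loose: that observation concerns adjacency resolving sets for the whole graph, whereas all you need here is the elementary fact that for $x,y\in M\cup N$ (disjoint from $B'$) the adjacency representations with respect to $B'$ coincide iff $N_H(x)\cap B'=N_H(y)\cap B'$.
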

\begin{proof}
	Let $G:=K_{1,m}\square K_{1,n}$ and let $x, y \in M \cup N$ and $b \in B$ be such that $d_G(x, b) \neq d_G(y, b)$. If $b \in M \cup N$, then $b \in \{x, y\}$, say $b = x$. Hence, $a_H(x, b') = 0 \neq a_H(y, b')$. Now, let $b = a_{i,j}$ for some $1 \leq i \leq m$ and $1 \leq j \leq n$. Since $d_G(x, b) \neq d_G(y, b)$, $b$ is in a common row or column with exactly one of the vertices $x$ or $y$, say $x$. Thus, $a_H(x, b') = 1$ and $a_H(y, b') = 2$. This means $x$ and $y$ are adjacency resolved by some vertex in $H$ with respect to $B'$.
\end{proof}

To  prepare for the proof of Theorem~\ref{n/2<m}, we need to consider the following cases separately.\\
Case 1: $m$ and $n$ are both at least 5.\\
Case 2: Either $m$ or $n$ is smaller than 5.

We begin with Case 1, assuming without loss of generality that $5 \leq m \leq n$.

\subsection{Case 1: $5 \leq m \leq n$}

\begin{lemma}\label{dim(G)<|S|}
	Let $5 \leq m \leq n$ and  $S \subseteq V(K_{1,m}\square K_{1,n})$. If $S'$, the corresponding set of $S$ in  $H_S := H(K_{1,m}\square K_{1,n}, S)$, adjacency resolves $M \cup N$ in $H_S$, then  $\dim(K_{1,m}\square K_{1,n}) \leq |S|$.
\end{lemma}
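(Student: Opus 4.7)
The plan is to show that, under the hypothesis, $S$ is already a resolving set for $G$ except possibly for one obstruction, and that this obstruction can be removed by exchanging a single element of $S$. In either case we obtain a resolving set of size at most $|S|$, giving $\dim(G)\leq|S|$.

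The bulk of the argument is a pair-by-pair analysis of $u\neq v$ in $V(G)\setminus S$. For $u,v\in M\cup N$, the resolving condition in $G$ is equivalent to the adjacency-resolving condition in $H_S$, because all relevant $G$-distances lie in $\{0,1,2,3\}$ and are governed precisely by whether $s$ shares a row or column with $u$ or $v$; the hypothesis therefore handles these pairs directly. For $u=a_{i,j}$ and $v=a_{i,j'}$ sharing a row, a failure would force $S\cap(C_j\cup C_{j'})=\emptyset$, whence $c_j$ and $c_{j'}$ share the all-$2$ adjacency representation in $H_S$, contradicting the hypothesis; the same-column case is symmetric. For $u=a_{i,j}$ and $v=a_{i',j'}$ in different rows and columns, a short case analysis of which $s$ fails to resolve them forces $S\subseteq\{a_{i,j'},a_{i',j}\}\cup\{a_{k,l}:k\notin\{i,i'\},\;l\notin\{j,j'\}\}$, which in turn makes either $(r_i,c_{j'})$ or $(r_{i'},c_j)$ share the same adjacency representation in $H_S$, again a contradiction. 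The remaining pairs involving $a_{0,0}$ are easy to handle except for $(a_{0,0},a_{i,j})$, which is resolved by $S$ precisely when $S$ contains a vertex outside $R_i\cup C_j$. Hence, if the configuration $S\subseteq R_i\cup C_j$ never arises, $S$ is itself a resolving set and $\dim(G)\leq|S|$.

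If $S\subseteq R_i\cup C_j$ does arise for some $i,j\geq 1$ (so $a_{i,j}\notin S$), the hypothesis restricted to pairs in $M\cup N$ pins down the structure of $S$: it must contain at least $m-2$ of the $a_{k,j}$ with $k\neq i$ and at least $n-2$ of the $a_{i,l}$ with $l\neq j$, and at most one ``missing'' row-index $k_0$ or column-index $l_0$ can exist (but not both, lest $r_{k_0}$ and $c_{l_0}$ share the all-$2$ adjacency representation). I would then construct $T$ of size $|S|$ by a one-vertex exchange, split by sub-case: if $r_i$ or $c_j$ lies in $S$, exchange it with $a_{0,0}$; otherwise, if $k_0$ exists exchange some $a_{i,l}\in S$ for $r_{k_0}$, if $l_0$ exists exchange some $a_{k,j}\in S$ for $c_{l_0}$, and if neither exceptional index exists (so $|S|$ is a bit larger) exchange any $a_{k,j}\in S$ for $a_{0,0}$. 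In every sub-case the new vertex lies outside $R_i\cup C_j$ and hence resolves $(a_{0,0},a_{i,j})$, while a direct check confirms that the primed copy $T'$ still adjacency-resolves $M\cup N$ in $H_T$; applying the earlier pair analysis to $T$ then shows $T$ is a resolving set of $G$.

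The main obstacle is the exchange step. The pair-by-pair part is largely mechanical once the translation between $G$-distances and $H$-adjacencies is nailed down, but the swap must be chosen carefully so that the adjacency-resolving property of $T'$ survives: removing the wrong vertex can create two distinct vertices in $M\cup N$ with identical representations. Splitting on whether $r_i,c_j$ lie in $S$ and on which exceptional index exists is what makes the analysis go through, and the assumption $5\leq m\leq n$ is used to guarantee that enough ``slack'' is present among the arm vertices for some suitable exchange to always be available.
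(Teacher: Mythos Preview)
Your proposal is correct and follows the same two-stage strategy as the paper: first show that the only possible unresolved pair under the hypothesis is $(a_{0,0},a_{i,j})$ with $S\subseteq R_i\cup C_j$, then perform a single-vertex exchange to obtain a set of the same size that \emph{is} a resolving set.  Your pair-by-pair analysis in the first stage is actually more explicit than the paper's, which handles the ``different rows/columns'' case in one line.

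The only genuine divergence is the exchange step.  The paper makes one uniform swap entirely inside $A$: it picks $a_{i,t}\in S$ and some $a_{z,j}\in S$ (both guaranteed to exist in abundance since $m,n\ge 5$) and replaces $a_{i,t}$ by $a_{z,t}$.  The new set $S_1$ then meets two distinct rows, so the configuration $S_1\subseteq R_{i'}\cup C_{j'}$ is impossible, and checking that $S_1'$ still adjacency-resolves $M\cup N$ reduces to looking at the single pair $\{r_z,c_t\}$.  Your route instead introduces the special vertices $a_{0,0}$, $r_{k_0}$, or $c_{l_0}$ according to four sub-cases.  This works, but each sub-case needs its own verification that $T'$ remains adjacency-resolving (e.g.\ after removing $a_{i,l}$ one must note that $c_l$ becomes the new unique isolated vertex), and the use of $a_{0,0}$ sits slightly awkwardly with the auxiliary-graph construction, which never specifies edges for $a'_{0,0}$.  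The paper's single swap inside $A$ buys a shorter and more uniform endgame; your case split buys nothing extra, though it is a perfectly valid alternative.
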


\begin{proof}
	The statement clearly holds if $S$ is a resolving set for  $G:=K_{1,m}\square K_{1,n}$. Thus, we may assume that $S$ is not a resolving set for $G$, i.e., there exist $x, y \in V(G) \setminus S$ such that for every $s \in S$, $d(x, s) = d(y, s)$.
	
	We claim that $a_{0,0} \in \{x, y\}$. Otherwise, since $x \neq y$, they must belong to different rows or columns of $G$. Without loss of generality, assume $x \in R_i$ and $y \in R_j$ for some $i \neq j$. By the assumption that all vertices of $H_S$ and specifically $r_i$ and $r_j$ are resolved by $S'$, there exists $s' \in S'$ such that $a(r_i, s') \neq a(r_j, s')$. Without loss of generality, assume $a(r_i, s') < a(r_j, s')$, implying $s' \in R_i$. Then $s$, the corresponding vertex in $G$, resolves $x$ and $y$, which contradicts the initial assumption. Hence, $a_{0,0} \in \{x, y\}$.
	
	Without loss of generality, assume $x = a_{0,0}$. Now, we claim that $y \notin M \cup N$. Otherwise, if $S \cap (M \cup N) = \emptyset$, then for every $s \in S$, $d(x, s) = 2$ but $d(y, s) \in \{1, 3\}$. Also, if $S$ intersects $M \cup N$ at an element $s_0$, then $d(x, s_0) = 1$ while $d(y, s_0) = 2$.
	
	Therefore, we proceed with the proof considering the only possible case where $x = a_{0,0}$ and $y = a_{i,j}$. First, observe that $S \subseteq R_i \cup C_j$. If there exists $s^* \in S \setminus (R_i \cup C_j)$, then $d(y, s^*) - d(x, s^*) = 2$, a contradiction.
	
	By the definition of $H_S$, if $S$ doesn't intersect a row (or column) in $G$, then the corresponding row (or column) is an isolated vertex in $M \cup N$ in $H_S$. Consequently, there is at most one such row or column, given the assumption that $S'$ adjacency resolves $M \cup N$ in $H_S$. Therefore, if $A := S \cap R_i \setminus \{r_i, a_{i,j}\}$ and $B := S \cap C_j \setminus \{c_j, a_{i,j}\}$, then $|A| \geq n-2$ and $|B| \geq m-2$. The assumption $5 \leq m \leq n$ guarantees the existence of indices $1 \leq z, z', z'' \leq m$ and $1 \leq t, t', t'' \leq n$ such that $\{a_{i,t}, a_{i,t'}, a_{i,t''}, a_{z,j}, a_{z',j}, a_{z'',j}\} \subseteq S$. Now, define the set $S_1 := (S \setminus \{a_{i,t}\}) \cup \{a_{z,t}\}$. We claim that $S_1$ is a resolving set for $G$.
	
	First, we show that $S'_1$, the corresponding set in $H_{S_1}$, adjacency resolves $M \cup N$ in $H_{S_1} = H(G, S_1)$. Assume to the contrary that there exist $v, w \in M \cup N$ such that for every $s'_1 \in S'_1$, $a_{H_{S_1}}(v, s'_1) = a_{H_{S_1}}(w, s'_1)$. On the other hand, since $S'$ is an adjacency resolving set for $M \cup N$ in $H_{S}$, there exists $s'_0 \in S'$ such that $a_{H_{S}}(v, s'_0) \neq a_{H_{S}}(w, s'_0)$. Note that for every $q \in S'_1 \cap S'$ and for any $k \in M \cup N$, $a_{H_{S}}(k, q) = a_{H_{S_1}}(k, q)$. Therefore, $a_{H_{S}}(v, s') = a_{H_{S}}(w, s')$ for all $s' \in S' \setminus \{s_0\}$. Consequently, $s'_0 = a'_{i,t}$ and $c_t \in \{v, w\}$. On the other hand, $a_{H_{S_1}}(c_t, a'_{z,t}) = 1$. Thus, $\{v, w\} = \{r_z, c_t\}$. However, $a_{H_{S_1}}(r_z, a'_{zj}) = 1 \neq 2 = a_{H_{S_1}}(c_t, a'_{zj})$. Hence, $S'_1$ adjacency resolves $M \cup N$ in $H_{S_1}$.
	
	For a contradiction, suppose that $S_1$ is not a resolving set for $G$. Then there exist $x_1, y_1 \in V(G)$ such that $d(x_1, s_1) = d(y_1, s_1)$ for every $s_1 \in S_1$. With a similar argument, it can be concluded that one of them, say $x_1 = a_{0,0}$ and $y_1 = a_{i_1,j_1}$, and $S_1 \subseteq R_{i_1} \cup C_{j_1}$. However, by definition, $S_1$ intersects two rows of $G$, which is a contradiction.
\end{proof}

Now, we describe additional properties of a basis $B$ in  $K_{1,m}\square K_{1,n}$ and its corresponding set $B'$ in $H$.

\begin{lemma}\label{P has no leaf in B'}
	Let $5 \leq m \leq n$ and $B$ be a basis of  $K_{1,m}\square K_{1,n}$. If $P$ is an induced maximal path of order at least five in  $H := H(K_{1,m}\square K_{1,n}, B)$, then $P$ has no leaf in $B'$.
\end{lemma}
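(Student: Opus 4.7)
The plan is to argue by contradiction: assume $P=v_1v_2\cdots v_k$ (with $k\ge 5$) is an induced maximal path in $H$ whose endpoint $v_1$ lies in $B'$, and let $x\in B$ be the vertex corresponding to $v_1$. I aim to show that $(B\setminus\{x\})'$ still adjacency resolves $M\cup N$ in the modified auxiliary graph $H_{B\setminus\{x\}}$; Lemma~\ref{dim(G)<|S|} then gives $\dim(K_{1,m}\square K_{1,n})\le |B|-1$, contradicting the minimality of $B$.

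Since $H$ is bipartite with parts $B'$ and $M\cup N$, the vertex $v_2$ lies in $M\cup N$, and the sub-cases $v_2\in M$ and $v_2\in N$ are handled by analogous arguments with rows and columns interchanged; so I may assume $v_2=r_{i_1}$. The first main step is to unfold $P$ near $v_1$. Using that every pendant copy $r'_i$ or $c'_j$ has degree one in $H$, a short case distinction on the form of $v_1$ (namely $v_1=a'_{i_1,j_1}$, $r'_{i_1}$, or $c'_{j_1}$) together with $k\ge 5$ forces $v_3=a'_{i_1,j_3}$ for some $j_3\ne j_1$ with $a_{i_1,j_3}\in B$, then $v_4=c_{j_3}$, and $v_5\in\{a'_{i_5,j_3},\,c'_{j_3}\}$ for some $i_5\ne i_1$. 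In the sub-case $v_1=a'_{i_1,j_1}$ (where $v_1$ has degree two in $H$), the maximality of $P$ at $v_1$ demands that the non-$P$ neighbor $c_{j_1}$ of $v_1$ create a chord upon being prepended; the induced-ness of $P$ and a small count on interior $B'$-vertices of degree two pin the chord-producing vertex down as the far endpoint, giving $v_k=a'_{i_k,j_1}$ with $i_k\ne i_1$. An inspection of the columns of $v_3,v_5$ additionally rules out $k=5$ here, so I can also work with $v_{k-1}=r_{i_k}$ and $v_{k-2}=a'_{i_k,j_{k-2}}$, where $j_{k-2}\ne j_1$.

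With this local picture, the second step is to verify that every pair in $M\cup N$ that was previously distinguished only by $v_1$ is also distinguished by some vertex of $\{v_3,v_5,v_{k-2},v_k\}\subseteq B'\setminus\{v_1\}$. The pairs in question have the form $\{r_{i_1},w\}$ (in all sub-cases) and $\{c_{j_1},w\}$ in the sub-case $v_1=a'_{i_1,j_1}$. A direct comparison of adjacency representations shows that $v_3$ distinguishes every $\{r_{i_1},w\}$ except $w=c_{j_3}$, which is then handled by $v_5$; symmetrically, $v_k$ distinguishes every $\{c_{j_1},w\}$ except $w=r_{i_k}$, which is handled by $v_{k-2}$. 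Combined with Lemma~\ref{B' is adjacency resolving set}, this shows that $(B\setminus\{x\})'$ adjacency resolves $M\cup N$ in $H_{B\setminus\{x\}}$, producing the desired contradiction.

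The main obstacle I anticipate lies in the structural analysis of the sub-case $v_1=a'_{i_1,j_1}$: one has to justify that $c_{j_1}\notin V(P)$ (otherwise $v_1$ would be adjacent in $H$ to a non-neighbor on $P$), that the chord-producing vertex must be an endpoint of $P$ (because any interior degree-two vertex of $B'$ in $P$ has both of its $H$-neighbors among its $P$-neighbors), and that, combined with the forced columns of $v_3,v_5$, the case $k=5$ is impossible. The remaining sub-cases are comparatively routine: when $v_1\in\{r'_{i_1},c'_{j_1}\}$ the vertex $v_1$ has degree one in $H$ so no chord discussion is required, and the sub-case $v_2\in N$ is handled by an obvious swap of rows and columns.
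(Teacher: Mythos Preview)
Your proposal is correct and follows the same strategy as the paper: delete the leaf $v_1$, verify that $B'\setminus\{v_1\}$ still adjacency resolves $M\cup N$, and invoke Lemma~\ref{dim(G)<|S|} for a contradiction. Your treatment is in fact more careful than the paper's in the sub-case $v_1=a'_{i_1,j_1}$, where $v_1$ has a second neighbour $c_{j_1}\in M\cup N$ whose $B'$-neighbourhood also changes upon deletion; the paper's sentence ``deleting $v_1$ can only change the set of neighbors of $v_2$'' glosses over this, whereas you pin the forced chord at the far endpoint $v_k$ and use $v_{k-2}$ to separate $c_{j_1}$ from $r_{i_k}$.
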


\begin{proof}
	Suppose, to the contrary, that there exists an induced maximal path $P = (v_1, v_2, v_3, v_4, v_5, \ldots)$ in $H$ with a leaf $v_1 \in B'$. By Lemma~\ref{B' is adjacency resolving set}, $M \cup N$ is adjacency resolved by $B'$. Hence, $N(u) \cap B'$ is unique for each $u \in M \cup N$. We will show that $M \cup N$ is adjacency resolved by $B' \setminus \{v_1\}$.
	
	Deleting $v_1$ can only change the set of neighbors of $v_2$, and $v_2$ can share its neighbors only with $v_4$, because $P$ is an induced maximal path. Therefore, if $M \cup N$ is not adjacency resolved by $B' \setminus \{v_1\}$, then we have $N(v_2) \cap (B' \setminus \{v_1\}) = N(v_4) \cap (B' \setminus \{v_1\})$. However, $v_5 \in (N(v_4) \setminus N(v_2)) \cap (B' \setminus \{v_1\})$. This contradiction implies that $M \cup N$ is adjacency resolved by $B' \setminus \{v_1\}$.
	
	Thus, Lemma~\ref{dim(G)<|S|} implies that  $\dim(K_{1,m}\square K_{1,n}) \leq |B' \setminus \{v_1\}| = \dim(K_{1,m}\square K_{1,n}) - 1$, which is impossible, and we are done.
\end{proof}

\begin{corollary}\label{no even maximal path>4}
	If $5 \leq m \leq n$ and $B$ is a basis of  $K_{1,m}\square K_{1,n}$, then every induced maximal even path in  $H := H(K_{1,m}\square K_{1,n}, B)$ has order at most $4$.
\end{corollary}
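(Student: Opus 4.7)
The plan is to obtain the corollary as a quick consequence of Lemma \ref{P has no leaf in B'} together with the bipartite structure of the auxiliary graph $H$. Recall that $H$ has partite sets $B'$ and $M\cup N$, so the vertices along any path in $H$ must alternate between these two classes. This single structural fact is what will convert the conclusion of the lemma into the desired bound on the order of an even induced maximal path.

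I would argue by contradiction. Suppose that there exists an induced maximal path $P$ in $H$ of even order $k\geq 6$. Since $k$ is even, the number of edges $k-1$ is odd, and the alternation along $P$ forces its two endpoints to lie in opposite partite sets of $H$. In particular, exactly one of the two endpoints (both of which are leaves of $P$) belongs to $B'$. Next, because $k\geq 6\geq 5$, Lemma \ref{P has no leaf in B'} applies to $P$ and asserts that $P$ has no leaf in $B'$. These two statements contradict each other, so no such $P$ can exist, and every induced maximal even path in $H$ has order at most $4$.

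Since essentially all of the substantive work has already been done inside Lemma \ref{P has no leaf in B'} (which in turn invoked Lemma \ref{dim(G)<|S|} to contract an adjacency resolving set by removing the offending leaf), I do not foresee any real obstacle in this corollary. The only point that needs a brief verification is the parity argument: in a bipartite graph, a path with an even number of vertices has one endpoint in each side of the bipartition, and this is precisely the hook that allows the previous lemma to be applied to rule out order $\geq 6$.
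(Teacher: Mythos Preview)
Your argument is correct and coincides with the paper's own proof: the paper simply notes that $H$ is bipartite, so every even path has a leaf in $B'$, and then invokes Lemma~\ref{P has no leaf in B'} to rule out order at least $5$. Your version is just a slightly more expanded contradiction form of the same parity-plus-lemma reasoning.
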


\begin{proof}
	Note that $H$ is a bipartite graph and every even path in $H$ has a leaf in $B'$. Therefore, Lemma~\ref{P has no leaf in B'} implies that every induced maximal even path in $H$ has order at most $4$.
\end{proof}

\begin{lemma}\label{lem:length<9}
	If $5 \leq m \leq n$ and $B$ is a basis of  $K_{1,m}\square K_{1,n}$, then the largest induced path in  $H := H(K_{1,m}\square K_{1,n}, B)$ has order at most $9$. Moreover, there is at most one induced path of order $9$ in $H$.
\end{lemma}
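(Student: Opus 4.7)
The plan is to prove both parts by contradiction via Lemma~\ref{dim(G)<|S|}: if some element $b\in B$ can be deleted so that the image of $B\setminus\{b\}$ still adjacency-resolves $M\cup N$ in the corresponding auxiliary graph, then $\dim(G)\le|B|-1$, contradicting that $B$ is a basis. A convenient preliminary, following from Corollary~\ref{no even maximal path>4} and Lemma~\ref{P has no leaf in B'}, is that every induced path in $H$ of order at least five extends to a maximal induced path of odd order whose endpoints lie in $M\cup N$. On such a path every interior $B'$-vertex has $H$-degree exactly two, so it corresponds to an element $a_{i,j}\in B\cap A$ whose two path-neighbors are precisely $r_i$ and $c_j$. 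Thus an induced path in $H$ of order $2k+1$ with endpoints in $M\cup N$ encodes a row-column alternating sequence $u_1,u_2,\ldots,u_{k+1}$ in $M\cup N$, together with $k$ distinct $B$-elements $a_{i_\ell,j_\ell}$ realising the consecutive pairs.

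For the bound of nine, suppose for contradiction there is an induced path in $H$ of order at least ten. Extending to a maximal induced path and using Corollary~\ref{no even maximal path>4} produces a maximal induced path of odd order at least eleven, so by the dictionary above we obtain an alternating sequence of length at least five. After relabelling, the first six entries are $r_{i_1},c_{j_1},r_{i_2},c_{j_2},r_{i_3},c_{j_3}$ (with the $i_\ell$'s and $j_\ell$'s each distinct), which forces $a_{i_1,j_1},a_{i_2,j_1},a_{i_2,j_2},a_{i_3,j_2},a_{i_3,j_3}\in B$. I would then show that $B\setminus\{a_{i_2,j_2}\}$ still adjacency-resolves $M\cup N$. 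If it did not, some pair in $M\cup N$ would have representations differing only in the coordinate $a'_{i_2,j_2}$, so one member of the pair must lie in $\{r_{i_2},c_{j_2}\}$. A short case split on which vertex this is, and on whether its partner is a row or a column, forces the partner to be $c_{j_1}$ (when the first is $r_{i_2}$) or $r_{i_3}$ (when the first is $c_{j_2}$); the flanking element $a_{i_1,j_1}$, respectively $a_{i_3,j_3}$, then contributes a $B'$-coordinate on which the two vertices already disagree, the desired contradiction.

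For the uniqueness, suppose two distinct induced paths of order nine in $H$, encoded as length-four alternating sequences $P$ and $Q$. I would run the same middle-edge removal argument on each of the four candidate edges of $P$. It can now fail, but only under a tight configuration of $B$ pinned to an endpoint side of $P$: a terminal row or column of $P$ must carry only the one $B$-element supplied there by $P$, while the adjacent column or row must carry only the two $B$-elements supplied by $P$ itself. The second sequence $Q$ is then used to defeat this tightness: either $Q$ places an additional $B$-element into one of these terminal rows or columns, directly breaking the tight condition, or, when $P$ and $Q$ share no vertex of $M\cup N$, $B$ is so concentrated on $P$ and $Q$ that the remaining vertices of $M\cup N$ cannot all have distinct adjacency representations, violating Lemma~\ref{B' is adjacency resolving set}. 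In either case an edge of $P$ becomes removable and Lemma~\ref{dim(G)<|S|} produces the contradiction.

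The main obstacle is the uniqueness step. Unlike the length-five analysis, the inner edges of a length-four sequence have no flanking path edge beyond the sequence itself to furnish an automatic contradiction, so one must enumerate edge by edge the precise tight configurations that could block removal, and then verify that no such configuration can coexist with a second length-four sequence and with the adjacency-resolving requirement on $M\cup N$. The enumeration is finite and routine but somewhat lengthy.
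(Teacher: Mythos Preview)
Your argument for the order-$\le 9$ bound is correct and coincides with the paper's: remove the middle $B'$-vertex of a maximal induced path of order $\ge 11$ and use the flanking $B'$-vertices on either side to restore adjacency resolution of $M\cup N$.

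For the uniqueness part your plan has a genuine gap. First, running single-vertex removal on the four $B'$-vertices of $P$ does not lead only to the ``tight endpoint'' obstruction you describe. Removing an outermost vertex $b'_1$ (or $b'_4$) isolates the endpoint $r_{i_1}$ (resp.\ $r_{i_3}$) of $P$, and the obstruction in that case is the presence of some \emph{other} vertex of $M\cup N$ with empty $B'$-neighbourhood, a condition unrelated to the degrees of the terminal rows and columns of $P$. So your enumeration of failure modes is incomplete. Second, and more seriously, in the disjoint case your assertion that ``$B$ is so concentrated on $P$ and $Q$ that the remaining vertices of $M\cup N$ cannot all have distinct adjacency representations'' is unjustified and in general false: the basis $B$ may contain arbitrarily many further elements lying in rows and columns disjoint from both $P$ and $Q$, which can adjacency-resolve the rest of $M\cup N$ without placing anything in the terminal rows or columns of $P$. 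Lemma~\ref{B' is adjacency resolving set} therefore gives no contradiction, and none of your four removals is forced to succeed.

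The paper bypasses this by a different and more direct manoeuvre. Rather than deleting a single vertex from $P$ and hoping $Q$ compensates, it \emph{merges} the two order-$9$ paths. Up to symmetry there are only two configurations for the eight degree-$2$ vertices of $B'$ coming from $P$ and $Q$ (depending on whether the endpoints of the two paths lie in the same part or in opposite parts of $M\cup N$). In each configuration the paper exhibits an explicit swap, removing two or three elements of $B$ and inserting one or two new $a_{i,j}$'s chosen to bridge rows and columns from both paths, so that $M\cup N$ is still adjacency resolved in the new auxiliary graph but $|B_1|=|B|-1$. Lemma~\ref{dim(G)<|S|} then yields the contradiction. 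This two-path merge is the missing idea in your proposal.
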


\begin{figure}[!htbp]
	\centering
	\includegraphics[width=.85\linewidth]{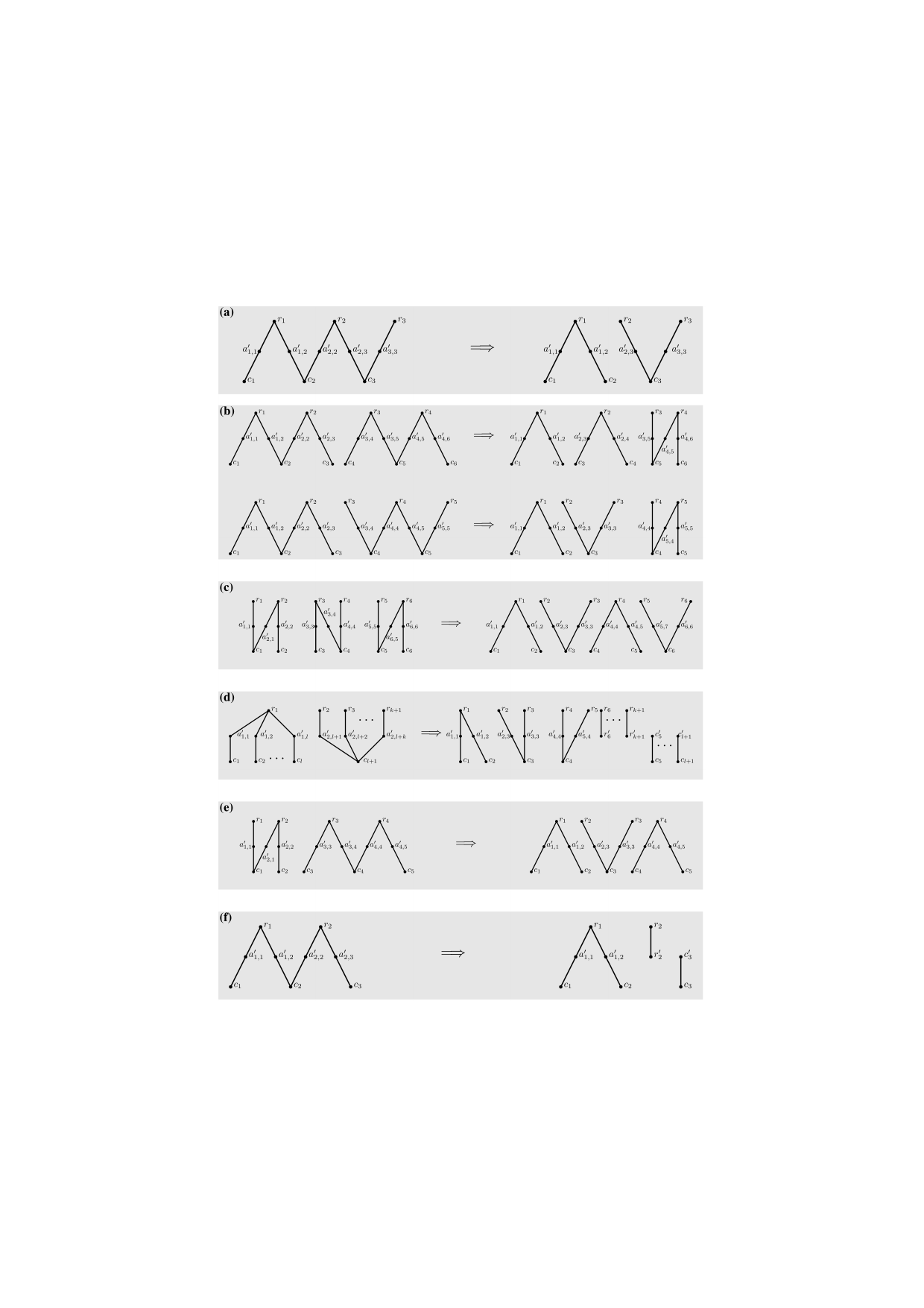}
	\caption{
		A set of six panels illustrating configurations and adjacency relations in the graph  $H(K_{1,m}\square K_{1,n}, B)$.
		(a) illustrates a path configuration of order 5 in a basis;
		(b) depicts the adjacency representation in a modified path structure;
		(c) shows an induced maximal path split into shorter resolving paths in a basis;
		(d) highlights the adjacency resolution of vertices in a modified $H(G, B)$;
		(e) represents resolving substructures in $H(G, B)$ to reduce basis size; and,
		(f) demonstrates adjacency in a specific subgraph of $H(G, B)$.
	}
	\label{fig:multiple}
\end{figure}

\begin{proof}
	By Corollary~\ref{no even maximal path>4}, it is sufficient to show that there is no path of order at least $11$ in $H$. Suppose, to the contrary, that $P$ is an induced maximal path in $H$ of order at least $11$. Lemma~\ref{P has no leaf in B'} implies that both leaves of $P$ are in $M \cup N$.
	
	By symmetry, we can assume that $c_1, b'_1, r_1, \ldots, c_3, b'_5, r_3$ are the first eleven vertices of $P$. With an argument similar to the proof of Lemma~\ref{P has no leaf in B'}, we can split this part of the path $P$ into two paths of order $5$ by removing $b'_3$, as shown in Figure~\ref{fig:multiple}(a). This gives us a resolving set for  $K_{1,m}\square K_{1,n}$ of cardinality $|B| - 1$, which is a contradiction. Thus, the largest induced path in $H$ has order at most $9$.

	Now, suppose there exist paths of order $9$ in $H$. Suppose, on the contrary, that there are two paths $P_1$ and $P_2$ of order $9$ in $H$. Consequently, $8$ vertices of $B'$ are shared between them.
	
	By Lemma~\ref{P has no leaf in B'} and the maximality assumption, all of these $8$ vertices have degree $2$. Thus, by symmetry, we can assume that these vertices are $a_{1,1}', a_{1,2}', a_{2,2}', a_{2,3}', a_{3,4}', a_{3,5}', a_{4,5}', a_{4,6}'$ or $a_{1,1}', a_{1,2}', a_{2,2}', a_{2,3}', a_{3,4}', a_{4,4}', a_{4,5}', a_{5,5}'$, as shown in Figure~\ref{fig:multiple}(b).
	
	Let $B_1 = (B \setminus \{a_{2,2}, a_{3,4}\}) \cup \{a_{2,4}\}$ or $B_1 = (B \setminus \{a_{2,2}, a_{3,4}, a_{3,5}\}) \cup \{a_{3,3}, a_{5,4}\}$, respectively. Clearly, $M \cup N$ is adjacency resolved by $B_1'$ in  $H(K_{1,m}\square K_{1,n}, B_1)$, and Lemma~\ref{dim(G)<|S|} implies that  $\dim(K_{1,m}\square K_{1,n}) \leq |B_1| = \dim(K_{1,m}\square K_{1,n}) - 1$. This contradiction completes the proof.
\end{proof}

\begin{lemma}\label{lem:at most two P7 in H}
	If $5 \leq m \leq n$ and $B$ is a basis of  $K_{1,m}\square K_{1,n}$, then there are at most two maximal induced paths of order $7$ in  $H := H(K_{1,m}\square K_{1,n}, B)$.
\end{lemma}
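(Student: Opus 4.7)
The plan is to argue by contradiction: suppose $H$ contains three distinct maximal induced paths $P_1,P_2,P_3$ of order $7$. By Lemma~\ref{P has no leaf in B'} both leaves of each $P_k$ lie in $M\cup N$, and since $H$ is bipartite between $M\cup N$ and $B'$, each $P_k$ alternates between the two parts and its three interior $B'$-vertices must correspond to degree-$2$ (i.e.\ $A$-type) members of $B$. Hence every $P_k$ has, up to symmetry, the form
\[
c_{j_k}\;-\;a'_{i_k,j_k}\;-\;r_{i_k}\;-\;a'_{i_k,q_k}\;-\;c_{q_k}\;-\;a'_{p_k,q_k}\;-\;r_{p_k},
\]
with $i_k\neq p_k$, $j_k\neq q_k$, and the three $B$-vertices $\{a_{i_k,j_k},a_{i_k,q_k},a_{p_k,q_k}\}$ form an ``L'' in the grid whose corner is the middle $B'$-vertex $a_{i_k,q_k}$. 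Maximality and induced-ness at each leaf then force $c_{j_k},r_{p_k}\notin B$, the column $j_k$ to contain at most the two $B$-vertices $a_{i_k,j_k},a_{p_k,j_k}$, and the row $p_k$ to contain at most $a_{p_k,j_k},a_{p_k,q_k}$; any other $B$-vertex in a leaf row or column would permit an induced extension of $P_k$ by one step, contradicting maximality.

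Next, I would classify the configurations of the triple $(P_1,P_2,P_3)$ according to how many rows, columns, and $B$-vertices the three ``L''s share: whether two or three corners lie in the same row or in the same column, whether two of the ``L''s share a non-corner $B$-vertex, and whether the leaf rows/columns of different $P_k$'s coincide. The leaf constraints derived above sharply restrict which combinations are admissible and reduce the analysis to a short finite list of essentially distinct overlap patterns.

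For each pattern I would then exhibit an explicit $B_1\subseteq V(G)$ with $|B_1|\leq |B|-1$ such that the associated $B_1'$ still adjacency resolves $M\cup N$ in $H(G,B_1)$, in the same spirit as the replacement arguments in the proofs of Lemmas~\ref{P has no leaf in B'} and~\ref{lem:length<9}. Typically this amounts to deleting one or two of the corner vertices $a_{i_k,q_k}$ and, if required, inserting a single carefully chosen $a_{r,s}$ so that the neighborhoods $N_{H(G,B_1)}(v)\cap B_1'$ remain pairwise distinct over $v\in M\cup N$. Lemma~\ref{dim(G)<|S|} then yields $\dim(K_{1,m}\square K_{1,n})\leq |B_1|<|B|$, contradicting that $B$ is a basis.

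The main obstacle is this last step, and in particular verifying that after each substitution no two vertices of $M\cup N$ acquire identical neighborhoods in $B_1'$. The most delicate sub-case is when the three ``L''s are pairwise row- and column-disjoint, since the local overlap available for a short-range swap is then minimal; in that situation one must exploit the combined global effect of three simultaneous maximality constraints, which together restrict $B$ outside the paths tightly enough that removing a single corner is either cost-free or compensable by an equally local modification.
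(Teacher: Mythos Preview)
Your approach is the same as the paper's: assume three maximal induced $P_7$'s, produce a set $B_1$ with $|B_1|=|B|-1$ whose copy $B_1'$ still adjacency resolves $M\cup N$, and invoke Lemma~\ref{dim(G)<|S|} for a contradiction. Your structural analysis of each $P_7$ as an ``L'' with three $A$-type $B$-vertices is correct and matches the paper's setup.

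The difference is in execution. The paper does not carry out the overlap classification you propose; it simply asserts that, by symmetry, the nine $B'$-vertices may be taken to be
\[
a'_{1,1},\,a'_{2,1},\,a'_{2,2},\quad a'_{3,3},\,a'_{3,4},\,a'_{4,4},\quad a'_{5,5},\,a'_{6,5},\,a'_{6,6},
\]
and then writes down the single explicit substitution
\[
B_1=(B\setminus\{a_{2,1},a_{2,2},a_{3,4},a_{5,5},a_{6,5}\})\cup\{a_{1,2},a_{2,3},a_{4,5},a_{5,6}\},
\]
which rewires the three $P_7$'s into two $P_5$'s while keeping all neighbourhoods in $M\cup N$ distinct. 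This is precisely the ``pairwise row- and column-disjoint'' sub-case you flag as the most delicate and leave unresolved; the paper's contribution here is exactly the concrete swap you were missing. Conversely, your proposal is more careful than the paper about why one may reduce to that disjoint configuration in the first place---the paper's ``by symmetry'' is doing real work that your maximality/leaf constraints begin to justify.
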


\begin{proof}
	Assume there are three maximal induced paths of order $7$ in $H$. By symmetry, let the $9$ vertices from $B'$ be $a_{1,1}', a_{2,1}', a_{2,2}', a_{3,3}', a_{3,4}', a_{4,4}', a_{5,5}', a_{6,5}', a_{6,6}'$, as shown in Figure~\ref{fig:multiple}(c).
	
	Let $B_1 = (B \setminus \{a_{2,1}, a_{2,2}, a_{3,4}, a_{5,5}, a_{6,5}\}) \cup \{a_{1,2}, a_{2,3}, a_{4,5}, a_{5,6}\}$. Hence, $M \cup N$ is adjacency resolved by $B_1'$ in  $H(K_{1,m}\square K_{1,n}, B_1)$, and by Lemma~\ref{dim(G)<|S|},  $\dim(K_{1,m}\square K_{1,n}) \leq |B_1| = \dim(K_{1,m}\square K_{1,n}) - 1$, which is a contradiction.
\end{proof}

\begin{lemma}\label{no path of order 3}
	If  $5 \leq m \leq n$ and $B$ is a basis of  $K_{1,m}\square K_{1,n}$, then there is no maximal induced path of order $3$ in  $H := H(K_{1,m}\square K_{1,n}, B)$.
\end{lemma}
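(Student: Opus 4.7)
The plan is to argue by contradiction. Suppose that $H := H(K_{1,m}\square K_{1,n}, B)$ contains a maximal induced path $P = x_1 x_2 x_3$ of order $3$. I would use the structural constraints imposed by maximality to pin down the shape of $P$, and then show that the resulting configuration forces two distinct vertices of $M \cup N$ to share the same neighborhood in $B'$, contradicting Lemma~\ref{B' is adjacency resolving set} (which applies because $B$, being a basis, is a resolving set).

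Since $H$ is bipartite with partite sets $B'$ and $M \cup N$, the endpoints $x_1$ and $x_3$ lie in the same part, so I would split into two cases. In the case $x_1, x_3 \in B'$ and $x_2 \in M \cup N$, I would take (by symmetry) $x_2 = r_i$. Every neighbor of $r_i$ in $B'$ is either the leaf $r_i'$ (when $r_i \in B$) or a degree-$2$ vertex $a_{i,k}'$ arising from some $a_{i,k}\in B$. Whenever an endpoint equals $a_{i,k}'$, its second neighbor $c_k$ is not adjacent to $x_2 = r_i$ (both lie in $M\cup N$) nor to the other endpoint (which is supported in row $i$, not column $k$), so $c_k$ yields an induced extension of $P$, contradicting maximality. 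The only alternative would force $x_1 = x_3 = r_i'$, which is absurd. This eliminates the case entirely.

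In the remaining case $x_1, x_3 \in M\cup N$ and $x_2 \in B'$, the vertex $x_2$ needs two distinct neighbors, so it cannot be a degree-$1$ leaf $r_i'$ or $c_j'$; instead $x_2 = a_{i,j}'$ with $N_H(x_2) = \{r_i, c_j\}$, forcing $P = r_i - a_{i,j}' - c_j$. Maximality at $r_i$ requires that every other potential neighbor of $r_i$ in $H$ be adjacent either to $x_2$ or to $c_j$. A direct inspection shows that $r_i'$ is adjacent to neither and that any $a_{i,k}'$ with $k\neq j$ has neighbors $\{r_i, c_k\}$ avoiding both $x_2$ and $c_j$; consequently no such neighbor may exist, giving $B\cap R_i = \{a_{i,j}\}$ and $r_i \notin B$. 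Symmetrically, $B\cap C_j = \{a_{i,j}\}$ and $c_j \notin B$.

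The contradiction is then immediate: under these conditions, $N_H(r_i)\cap B' = N_H(c_j)\cap B' = \{a_{i,j}'\}$, so $r_i$ and $c_j$ have identical adjacency representations with respect to $B'$ in $H$, contradicting Lemma~\ref{B' is adjacency resolving set}. The main obstacle is the disciplined bookkeeping required to enforce non-extendability in the first case and to enumerate the candidate neighbors of $r_i$ (respectively $c_j$) in the second --- one must treat the leaves $r_i'$, $c_j'$ and the degree-$2$ subdivision vertices $a_{i,k}'$ separately, and verify in each instance that the putative extension would indeed produce an induced path. Once the tight structural description $B\cap (R_i\cup C_j) = \{a_{i,j}\}$ has been extracted, the conclusion follows with no further calculation.
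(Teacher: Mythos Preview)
Your proof is correct, and in Case~2 it coincides with the paper's argument: once the middle vertex is identified as $a_{i,j}'$, maximality forces $N_H(r_i)\cap B' = N_H(c_j)\cap B' = \{a_{i,j}'\}$, contradicting Lemma~\ref{B' is adjacency resolving set}. In Case~1, however, you take a genuinely different route. The paper observes that when both leaves of $P$ lie in $B'$, one of them can be deleted from $B'$ while $M\cup N$ remains adjacency resolved, and then invokes Lemma~\ref{dim(G)<|S|} to contradict the minimality of the basis. You instead exploit the maximality of $P$: at least one endpoint must be a degree-$2$ vertex $a_{i,k}'$, whose second neighbour $c_k$ (or $r_k$, in the symmetric situation) yields an induced extension of $P$. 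Your argument is slightly more elementary in that it avoids appealing to Lemma~\ref{dim(G)<|S|} and hence does not actually use the hypothesis $5\le m\le n$; the paper's approach, on the other hand, aligns with the recurring ``shrink the basis'' technique used throughout the surrounding lemmas.
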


\begin{proof}
	Suppose, to the contrary, that there exists a path $P$ of order $3$ in $H$. If $P$ has a leaf in $B'$, then $P$ has exactly two leaves in $B'$, and clearly $B'$ without one of them adjacency resolves $M \cup N$, which is a contradiction.
	
	Otherwise, $P$ has a leaf in $M$ and one in $N$. It is easy to see that the adjacency representations of these two leaves with respect to $B'$ are the same, which is impossible. Therefore, there is no maximal induced path of order $3$ in $H$.
\end{proof}

\begin{corollary}\label{cor:length<8}
	If $5 \leq m \leq n$ and $B$ is a basis of  $K_{1,m}\square K_{1,n}$, then all the maximal induced paths in  $H := H(K_{1,m}\square K_{1,n}, B)$ are of order $4$ or $5$, except possibly:\\[.1cm]
	- One path of order $9$, or\\
	- Two paths of order $7$, and possibly some single edges and isolated vertices.
\end{corollary}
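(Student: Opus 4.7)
The plan is a straightforward bookkeeping consolidation of the preceding lemmas of this subsection; no new structural idea is required. Since $H$ is bipartite with parts $B'$ and $M\cup N$, every maximal induced path alternates between the two sides, and I would enumerate the possible orders $k$ of such a path and discard the forbidden values one at a time using the results already established.

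First, Lemma~\ref{lem:length<9} gives $k\leq 9$ and caps the number of induced paths of order exactly $9$ at one. Corollary~\ref{no even maximal path>4} rules out every maximal induced even path of order at least $6$, so $k\notin\{6,8\}$. Lemma~\ref{no path of order 3} rules out $k=3$. Finally, Lemma~\ref{lem:at most two P7 in H} caps the number of maximal induced paths of order $7$ at two. Together these kill all orders outside $\{1,2,4,5,7,9\}$ and supply the required multiplicity bounds on the two exceptional orders.

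After these eliminations, the admissible orders are $k\in\{1,2,4,5\}$ without restriction, plus at most two paths of order $7$ and at most one path of order $9$. Maximal paths of order $1$ are isolated vertices of $H$ (which must lie in $M\cup N$, since by construction every $b'\in B'$ has degree $1$ or $2$), and maximal paths of order $2$ are single edges; together these account for the ``single edges and isolated vertices'' clause in the statement. Paths of orders $4$ and $5$ form the main population, while the two allowed order-$7$ paths or the single order-$9$ path constitute the exceptional cases listed in the corollary.

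I do not expect any genuine obstacle, since every bound used is already in hand. The only mild subtlety is how to parse the ``or'' in the corollary's statement: I would read it as a permissive enumeration of the exceptional orders rather than an exclusive dichotomy, since the previous lemmas do not forbid an order-$9$ path from coexisting with order-$7$ paths. If a stricter exclusivity were intended, one would need an additional counting or local-swap argument (analogous to the vertex-exchange constructions used inside Lemmas~\ref{lem:length<9} and~\ref{lem:at most two P7 in H}) comparing the total number of vertices contributed to $B'$ by both exceptional paths against $|B|$; but this goes beyond what the stated lemmas provide.
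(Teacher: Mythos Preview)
Your proposal is correct and matches the paper's approach: the corollary is stated without proof, intended as a direct bookkeeping consolidation of Lemmas~\ref{P has no leaf in B'}, \ref{lem:length<9}, \ref{lem:at most two P7 in H}, \ref{no path of order 3} and Corollary~\ref{no even maximal path>4}, exactly as you lay out. Your permissive reading of the ``or'' is also correct at this point in the paper---the mutual exclusivity of order-$7$ and order-$9$ paths is only established afterward, in Lemma~\ref{lem:path68}.
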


\begin{lemma}\label{lem:Degree3}
	Let $5 \leq m \leq n$, and $B$ be a basis of  $K_{1,m}\square K_{1,n}$. If each of the parts $M$ and $N$ contains a vertex of degree at least $3$ in  $H := H(K_{1,m}\square K_{1,n}, B)$, then each of $M$ and $N$ contains exactly one vertex of degree $3$.
\end{lemma}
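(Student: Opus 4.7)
The plan is to proceed by contradiction, turning any violation of the conclusion into a strictly smaller set $B_1 \subseteq V(G)$ whose image $B_1'$ still adjacency resolves $M \cup N$ in $H(G, B_1)$; Lemma \ref{dim(G)<|S|} then forces $\dim(G) \leq |B_1| < |B|$, contradicting that $B$ is a basis. The hypothesis that $M$ and $N$ each contain a vertex of degree at least $3$ supplies the rich local structure needed to carry out these reductions.

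Assuming the conclusion fails, I would split into two regimes: (i) some vertex of $M \cup N$ has degree at least $4$; or (ii) one of the parts (say $M$) contains two distinct vertices of degree at least $3$, while the other part still has a vertex, say $c_1$, of degree at least $3$. In case (i), by symmetry I may suppose $r_1$ has four neighbours $a'_{1,1}, a'_{1,2}, a'_{1,3}, a'_{1,4}$ in $H$, treating the subcase $r_1 \in B$ by letting $r'_1$ play the role of one of them. Setting $B_1 := B \setminus \{a_{1,1}\}$, the three surviving neighbours of $r_1$ cover every pair in $M\cup N$ involving $r_1$, and the column-type pairs involving $c_1$ remain resolved because $c_1$ itself still has at least two neighbours in $H(G, B_1)$, guaranteed by its original degree being at least $3$.

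In case (ii), let $r_1, r_2 \in M$ both have degree at least $3$ and $c_1 \in N$ have degree at least $3$. After permuting rows and columns I reduce to a short list of explicit configurations of the $B$-elements lying in rows $1, 2$ and column $1$, namely the patterns shown in Figure \ref{fig:multiple}(e)--(f). In each pattern I delete one or two $B$-elements from the overlapping incidences and, when necessary, introduce a single replacement in an otherwise untouched row or column; the bound $5 \leq m \leq n$ guarantees enough room for any such replacement. A pair-by-pair adjacency-resolution check in $H(G, B_1)$ then yields the required contradiction.

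The principal obstacle is the bookkeeping in case (ii). An element like $a_{1,1}$ can be simultaneously incident to $r_1$ and $c_1$ in $H$, so its removal threatens to drop several high degrees at once. The remedy is a careful case split on the incidence pattern of the three chosen high-degree vertices (for instance, whether rows $1, 2$ each contain an element of column $1$, and whether $r_1$ or $c_1$ lies in $B$), together with explicit verification that the swap preserves adjacency resolution of every row--row, column--column, and row--column pair. The most delicate subcase is a row--column pair whose only shared $B'$-neighbour is the element being removed; showing that the redundancy at another high-degree vertex always supplies a rescuing neighbour will be the technical heart of the argument.
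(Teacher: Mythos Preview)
Your overall strategy matches the paper's: assume the conclusion fails, build $B_1$ with $|B_1|<|B|$ whose copy $B_1'$ still adjacency resolves $M\cup N$, and invoke Lemma~\ref{dim(G)<|S|}. The case split into (i) a vertex of degree $\ge 4$ and (ii) two vertices of degree $\ge 3$ in one part is also how the paper organises the argument.

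Your case (i), however, has a real gap. You assert that after deleting $a_{1,1}$, ``$c_1$ itself still has at least two neighbours in $H(G,B_1)$, guaranteed by its original degree being at least $3$.'' Nothing guarantees $\deg_H(c_1)\ge 3$. The hypothesis says only that \emph{some} vertex of $N$ has degree $\ge 3$, and you have already spent your column-relabelling freedom to place the four $H$-neighbours of $r_1$ in columns $1,2,3,4$; you cannot independently force the high-degree column vertex to be $c_1$ unless $a_{1,1}$ happens to lie in $B$, i.e.\ unless the two high-degree spiders share a leaf. If the high-degree vertex of $N$ is $c_{10}$ and column $1$ meets $B$ only in $a_{1,1}$, then deleting $a_{1,1}$ isolates $c_1$ in $H(G,B_1)$, and $c_1$ can collide with any vertex that was already isolated. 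A bare one-element deletion does not work in general.

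The paper handles this differently: it uses \emph{both} high-degree vertices at once. Taking $r_1$ of degree $l\ge 3$ and a column vertex of degree $k\ge 4$, it removes $l+k-2$ of their leaves and replaces them by $l+k-3$ new elements (two short $P_5$-type pieces together with pendant $r_i$'s and $c_j$'s), so that every affected row and column is re-covered in a controlled way. The point is that the reconstruction absorbs the row spider and the column spider simultaneously, which is exactly what prevents the collateral damage to a low-degree $c_j$ that your single deletion cannot avoid. Your case (ii) plan is already in this spirit; case (i) needs a construction of the same flavour, not a bare removal.
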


\begin{proof}
	Suppose that both $M$ and $N$ contain vertices of degree at least $3$ in $H$. First, we show that the maximum degree of each part is at most $3$. Assume to the contrary that $\deg_{H}(r_1) = l \geq 3$ for $r_1 \in M$ and $\deg_{H}(c_{l+1}) = k \geq 4$ for $c_{l+1} \in N$, with $l + k$ members of $B'$, as shown in Figure~\ref{fig:multiple}(d).
	
	Consider the set $B_1$, where the elements $a_{1,3}, \ldots, a_{1,l}, a_{2,l+1}, \ldots, a_{2,l+k}$ are removed from $B$ and replaced by $a_{2,3}$, $a_{3,3}$, $a_{4,4}$, $a_{5,4}$, $r_6$, $\ldots$, $r_{k+1}$, $c_5$, $\ldots$, $c_{l+1}$. 
	That is, $B_1$ is obtained by removing the elements  
	$\{a_{1,3}, \ldots, a_{1,l}, a_{2,l+1}, \ldots, a_{2,l+k}\} $ from $B$,  
	and adding the elements  
	$\{a_{2,3}, a_{3,3}, a_{4,4}, a_{5,4}, r_6, \ldots, r_{k+1}, c_5, \ldots, c_{l+1} \}$.
	Clearly, $|B_1| = |B| - 1$, and $M \cup N$ is adjacency resolved by $B_1'$ in  $H(K_{1,m}\square K_{1,n}, B_1)$. By Lemma~\ref{dim(G)<|S|}, this implies that  $\dim(K_{1,m}\square K_{1,n}) \leq |B_1| = \dim(K_{1,m}\square K_{1,n}) - 1$, contradicting the minimality of $B$.
	
	Therefore, each part contains at most one vertex of degree $3$. Figure~\ref{fig:multiple}(d) demonstrates that any more would contradict the minimality of $B$.
\end{proof}

\begin{lemma}\label{lem:path68}
	Let $5 \leq m \leq n$, and $B$ be a basis of  $ K_{1,m}\square K_{1,n} $. Then  $ H := H(K_{1,m}\square K_{1,n},B) $ cannot contain induced paths of order $7$ and $9$ simultaneously.
\end{lemma}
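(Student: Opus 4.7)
The plan is to argue by contradiction in the style of Lemmas \ref{lem:length<9} and \ref{lem:at most two P7 in H}: assume $H$ simultaneously contains an induced maximal path $P_9$ of order $9$ and an induced maximal path $P_7$ of order $7$, and exhibit a set $B_1\subseteq V(G)$ with $|B_1|=|B|-1$ whose associated copy $B_1'$ still adjacency-resolves $M\cup N$ in $H(G,B_1)$. By Lemma \ref{dim(G)<|S|} this contradicts $B$ being a basis.

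First I would pin down the shape of the two paths. Lemma \ref{P has no leaf in B'} forces every leaf of $P_9$ and $P_7$ to lie in $M\cup N$. Because $H$ is bipartite with the paths alternating between $B'$ and $M\cup N$, $P_9$ contributes $4$ vertices of $B'$ and $5$ of $M\cup N$, while $P_7$ contributes $3$ and $4$ respectively; moreover, induced maximality forces every internal $B'$-vertex to have degree exactly $2$ in $H$. Consecutive $B'$-vertices on such a path correspond to $B$-elements of the form $a_{i,j},a_{i,j'}$ or $a_{i,j},a_{i',j}$, so each path traces a staircase of rows and columns in $G$. Using symmetry, I may take the $B'$-vertices of $P_9$ to be $a'_{1,1},a'_{1,2},a'_{2,2},a'_{2,3}$ and then split into subcases depending on whether the $B'$-vertices of $P_7$ are disjoint from those of $P_9$ (shared $B'$-vertices would merge or extend the paths and contradict induced-maximality), and whether $P_7$ shares an $M\cup N$-vertex with $P_9$; modulo symmetry there are only a handful of configurations to examine.

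In each subcase the construction of $B_1$ follows the template used in Lemma \ref{lem:length<9}: delete two or three middle $B$-vertices from the union of the two paths and add one or two replacement vertices that re-link the rows and columns whose neighborhoods would otherwise collapse. For example, in the disjoint layout where $P_7$ has $B'$-vertices $a'_{3,4},a'_{3,5},a'_{4,5}$, a natural candidate is
\[
B_1=\bigl(B\setminus\{a_{1,2},a_{2,2},a_{3,5}\}\bigr)\cup\{a_{1,3},a_{2,5}\},
\]
which reduces $|B|$ by one; in subcases where the two paths meet at a common vertex of $M\cup N$, a single deletion plus no replacement suffices.

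The step I expect to be the main obstacle is verifying, via Observation \ref{observation}, that $N_{H(G,B_1)}(v)\cap B_1'$ remains pairwise distinct across all $v\in M\cup N$: one has to check the affected rows $r_1,r_2,r_3,r_4$ and columns $c_1,\ldots,c_5$ against each other, and also check that no row or column outside the two paths acquires the same $B_1'$-neighborhood as one inside. The hypothesis $5\leq m\leq n$ provides enough unused rows and columns to place each replacement in a coordinate not otherwise incident to $B$, which makes these checks mechanical once the swap is fixed. Applying Lemma \ref{dim(G)<|S|} then yields $\dim(G)\leq|B_1|=|B|-1$, contradicting the minimality of $B$ and completing the proof.
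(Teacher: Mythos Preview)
Your strategy coincides with the paper's: assume both paths exist, perform a local replacement on the seven $B$-vertices they contribute, check that $B_1'$ still adjacency-resolves $M\cup N$, and invoke Lemma~\ref{dim(G)<|S|}. The paper's proof is in fact just a pointer to Figure~\ref{fig:multiple}(e), which encodes exactly such a replacement.

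However, the concrete candidate you wrote down does not work. With $P_9$ on $\{c_1,r_1,c_2,r_2,c_3\}$ and $P_7$ on $\{c_4,r_3,c_5,r_4\}$, your set
\[
B_1=\bigl(B\setminus\{a_{1,2},a_{2,2},a_{3,5}\}\bigr)\cup\{a_{1,3},a_{2,5}\}
\]
leaves $N_{H(G,B_1)}(r_3)\cap B_1'=\{a'_{3,4}\}=N_{H(G,B_1)}(c_4)\cap B_1'$, so $r_3$ and $c_4$ are not adjacency-resolved; moreover $c_2$ becomes isolated, which collides with any isolated vertex already present in $H$. The swap you need is the one that turns the nine $M\cup N$-vertices into three disjoint $P_5$'s (two of type ``one row, two columns'' and one of type ``two rows, one column'', matching the count $5$ columns and $4$ rows). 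For instance
\[
B_1=\bigl(B\setminus\{a_{2,2},a_{3,4}\}\bigr)\cup\{a_{2,4}\}
\]
yields the three paths $c_1\,a'_{1,1}\,r_1\,a'_{1,2}\,c_2$, $c_3\,a'_{2,3}\,r_2\,a'_{2,4}\,c_4$, $r_3\,a'_{3,5}\,c_5\,a'_{4,5}\,r_4$, and one checks immediately that all nine neighbourhoods are distinct and that no vertex outside the two paths is affected. Once the replacement is corrected in this way, the rest of your argument goes through verbatim.
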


\begin{proof}
	If it does, then Figure~\ref{fig:multiple}(e) shows how we can find a basis of size less than $ |B| $.
\end{proof}

According to the previous lemmas, if $5 \leq m \leq n$ and $B$ is a basis of $ K_{1,m}\square K_{1,n} $, then $ H $  may consist of paths of order $1$, $2$, $4$, and $5$,  possibly one path of order $9$, and at most two paths of order $7$.  However, paths of order $7$ and $9$ cannot appear simultaneously.
Also, if both $ M $ and $ N $ contain a vertex of degree at least $3$, then each of them contains at most one  such vertex of degree $3$.
These results describe the possible configurations that may arise in $H$, as Lemmas~\ref{P has no leaf in B'}, \ref{lem:length<9}, \ref{lem:at most two P7 in H}, \ref{no path of order 3}, \ref{lem:Degree3}, \ref{lem:path68} and Corollaries~\ref{no even maximal path>4}, \ref{cor:length<8} apply to any basis $B$.  In what follows, however,  we shift our focus to find a basis $B$ with specific desired properties. Accordingly, the upcoming results are existential in nature.

\begin{theorem}\label{Thm:1or4}
	If $5 \leq m \leq n$, then there exists a basis $B$ of  $ K_{1,m}\square K_{1,n} $ such that  $ \Delta(H(K_{1,m}\square K_{1,n},B)) \leq 2 $ and  $H(K_{1,m}\square K_{1,n},B)$ consists only of  paths of order  $1$, $2$ and $5$.
\end{theorem}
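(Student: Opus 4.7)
The plan is to produce $B$ by an explicit construction that prescribes the components of $H(G,B)$ in advance. Let $k := \lfloor(2m-n)/3\rfloor$ and $s := (2m-n)\bmod 3$, so that $\dim(G) = n+k$. Introduce nonnegative integers $r_5,c_5,r_2,c_2,r_1,c_1$ counting the intended row- and column-type $P_5$'s, row- and column-type $P_2$'s, and isolated $M$- and $N$-vertices of $H(G,B)$. Tallying rows, columns, and landmarks gives the linear system
\[
r_5+2c_5+r_2+r_1=m,\quad 2r_5+c_5+c_2+c_1=n,\quad 2(r_5+c_5)+r_2+c_2=n+k,
\]
together with the constraint $r_1+c_1\le 1$ dictated by Lemma~\ref{B' is adjacency resolving set}. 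A short case distinction on $s\in\{0,1,2\}$ produces a nonnegative integer solution for every admissible pair $(m,n)$ with $5\le m\le n$ and $n\le 2m$.

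With these counts fixed, I partition the rows and columns of $G$ into disjoint blocks matching the prescribed components and place landmarks block by block: each row-type $P_5$-block on row $i$ with chosen endpoint columns $j_1,j_2$ adds $a_{i,j_1}$ and $a_{i,j_2}$ to $B$; each column-type $P_5$-block does the symmetric placement; each $P_2$-block adds the corresponding $r_i$ or $c_j$ to $B$; and the (at most one) isolated row or column is left empty. Then $|B|=n+k=\dim(G)$ by the third equation, every row and every column meets $B$ in at most two elements (so $\Delta(H(G,B))\le 2$), and the induced subgraph of $H(G,B)$ on each block is exactly the prescribed $P_1$, $P_2$, or $P_5$.

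It remains to verify that $B$ actually resolves $G$. Distinctness of the adjacency representations of the vertices in $M\cup N$ with respect to $B'$ is immediate from the disjoint-block structure, since distinct blocks contribute disjoint subsets of $B'$, and the middle versus leaf roles within each $P_5$ yield distinct singleton and pair neighborhoods. For the grid vertices $a_{i,j}$ and the centre $a_{0,0}$, separation follows from the distance formula in $K_{1,m}\square K_{1,n}$ together with the fact that landmarks appear in all but at most one row or column. The main obstacle is separating pairs $a_{i,j},a_{i,l}$ that share a common row (and symmetrically a common column), since only landmarks outside that row can distinguish them; the disjoint-blocks placement is engineered so that for every row $i$ and every two distinct columns $j,l$ at least one of the columns $j,l$ contains a landmark outside row $i$, with the single possibly-isolated column handled by combining this direct check with Lemma~\ref{dim(G)<|S|} to certify $|B|=\dim(G)$, making $B$ the sought basis.
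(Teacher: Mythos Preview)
Your argument has a genuine circularity problem. You open with ``Let $k := \lfloor(2m-n)/3\rfloor$ \ldots so that $\dim(G) = n+k$,'' and this equality is precisely Theorem~\ref{n/2<m}, which has not yet been proved at this point in the paper; in fact, Theorem~\ref{Thm:1or4} is one of the ingredients in its proof. Your construction builds a set $B$ with $|B|=n+k$ and argues that $B$ is a resolving set. That only yields $\dim(G)\le n+k$. To conclude that $B$ is a \emph{basis} you also need $\dim(G)\ge n+k$, and nothing in your proposal supplies that lower bound. The final appeal to Lemma~\ref{dim(G)<|S|} does not help here: that lemma again gives only the inequality $\dim(G)\le|S|$, so it cannot ``certify $|B|=\dim(G)$.'' Without the (not yet established) lower bound, you have produced a resolving set with the right $H$-structure but cannot claim it is minimum.

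The paper sidesteps this issue entirely by working in the opposite direction: it starts from an arbitrary basis $B$, whose size is $\dim(G)$ by definition, and then performs a sequence of local, size-preserving replacements in $H(G,B)$, each replacement producing a new set $S$ for which $S'$ still adjacency resolves $M\cup N$, so Lemma~\ref{dim(G)<|S|} guarantees $\dim(G)\le|S|=|B|=\dim(G)$ and hence $S$ is again a basis. Concretely, a spider at a degree-$\ge 3$ vertex becomes a $P_5$ plus extra edges, a $P_4$ becomes two $P_2$'s, a $P_9$ becomes a $P_5$ plus two $P_2$'s, and a $P_7$ becomes a $P_5$ plus a $P_2$; the structural Lemmas~\ref{P has no leaf in B'}--\ref{lem:path68} limit which components can occur. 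This transform-an-existing-basis approach never needs to know the numerical value of $\dim(G)$ in advance, which is exactly what your explicit-construction route would require.
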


\begin{proof}
	To see the first part, if there is a vertex $ v $ of degree $ r \geq 3 $ in  $ H := H(K_{1,m}\square K_{1,n}, B) $, then the corresponding spider graph can be replaced by a path of order $5$ with $ v $ as the middle vertex and $ r-2 $ edges. This also shows that there is a basis $ B $ of  $K_{1,m}\square K_{1,n}$ such that $ H $ may contain only some edges, paths of order $5$, at most two paths of order $7$, and at most one path of order $9$. However, a path of order $4$ can be changed into two edges, a path of order $9$ can be replaced by a path of order $5$ and two edges. Also, a path of order $7$ can be changed into a path of order $5$ and an edge as shown in Figure~\ref{fig:multiple}(f).
\end{proof}

\begin{lemma}\label{M and N has one end}
	Let $5 \leq m \leq n$. Then there exists a basis $B$ of  $K_{1,m}\square K_{1,n}$ such that  $H := H(K_{1,m}\square K_{1,n}, B)$ consists only of paths of order $1$, $2$, and $5$. Moreover, if both $M$ and $N$ intersect some path of order $2$ at exactly one vertex, then each of them intersects at most one such path.
\end{lemma}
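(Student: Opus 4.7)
The plan is to inherit a basis $B$ directly from Theorem~\ref{Thm:1or4}, which supplies a basis whose associated bipartite graph $H:=H(K_{1,m}\square K_{1,n},B)$ is already a disjoint union of paths of orders $1$, $2$ and $5$, and to verify the \emph{moreover} clause by showing it is automatic for any such $B$. The structural remark I would use is that a path-of-order-$2$ component of $H$ meeting $M$ in exactly one vertex must be an edge $r_i r_i'$ with $r_i\in B$ and $R_i\cap A\cap B=\emptyset$ (otherwise $r_i$ would have a further neighbour $a_{i,\ell}'$ in $H$), and symmetrically an order-$2$ path meeting $N$ in exactly one vertex must be an edge $c_j c_j'$ with $c_j\in B$ and $C_j\cap A\cap B=\emptyset$. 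Hence the \emph{moreover} clause is equivalent to the assertion that $H$ cannot simultaneously contain two \emph{row-edges} $r_i r_i',\,r_k r_k'$ together with a \emph{column-edge} $c_j c_j'$ (and symmetrically, two column-edges together with one row-edge).

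Assume for contradiction that such a triple of edges lives in $H$. I would then define
\[
B_1:=(B\setminus\{r_i,r_k,c_j\})\cup\{a_{i,j},a_{k,j}\},
\]
so that $|B_1|=|B|-1$, and inspect $H_1:=H(K_{1,m}\square K_{1,n},B_1)$. Because rows $R_i$, $R_k$ and column $C_j$ were free of $A$-members of $B$, the only new or modified edges in $H_1$ are
\[
r_i\,a_{i,j}',\quad c_j\,a_{i,j}',\quad c_j\,a_{k,j}',\quad r_k\,a_{k,j}',
\]
which together form an induced path on five vertices, while every other vertex of $M\cup N$ keeps precisely the same neighbourhood in $B_1'\setminus\{a_{i,j}',a_{k,j}'\}=B'\setminus\{r_i',r_k',c_j'\}$. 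The main step is then a short case check verifying that $B_1'$ still adjacency-resolves $M\cup N$ in $H_1$. For pairs $(u,v)$ with $\{u,v\}\cap\{r_i,r_k,c_j\}=\emptyset$ resolution is inherited: the degree-$1$ coordinates $r_i',r_k',c_j'$ could only have distinguished pairs involving $r_i,r_k,c_j$, so any resolver supplied by Lemma~\ref{B' is adjacency resolving set} actually lies in $B'\setminus\{r_i',r_k',c_j'\}\subseteq B_1'$. For the remaining pairs one checks: $a_{i,j}'$ separates $r_i$ from every vertex other than $c_j$; $a_{k,j}'$ separates $r_k$ from every vertex other than $c_j$; $c_j$ differs from $r_i$ on the coordinate $a_{k,j}'$ and from $r_k$ on $a_{i,j}'$; and $r_i,r_k$ differ on both new coordinates. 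Lemma~\ref{dim(G)<|S|} then yields $\dim(K_{1,m}\square K_{1,n})\leq|B_1|=|B|-1$, contradicting the minimality of $B$. The symmetric configuration of two column-edges and a row-edge is handled verbatim by interchanging the roles of rows and columns.

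The only mildly delicate point is the exhaustive case analysis showing that $B_1'$ adjacency-resolves $M\cup N$: one has to be careful that the discarded degree-$1$ coordinates $r_i',r_k',c_j'$ really played no role except distinguishing $r_i,r_k,c_j$ themselves, and that after the swap no other edge of $H$ is disturbed. Both facts are guaranteed by the assumption that the three edge-components isolated rows $R_i,R_k$ and column $C_j$ from the remainder of $B$, so the verification, while requiring care, is essentially mechanical.
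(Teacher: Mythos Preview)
Your proposal is correct and follows essentially the same route as the paper: both arguments take the basis from Theorem~\ref{Thm:1or4}, assume two ``row-edges'' and one ``column-edge'' coexist, and replace the three elements $\{r_i,r_k,c_j\}$ of $B$ by $\{a_{i,j},a_{k,j}\}$ to obtain a $P_5$, then invoke Lemma~\ref{dim(G)<|S|}. Your explicit identification of the order-$2$ components as $r_ir_i'$ (forcing $R_i\cap A\cap B=\emptyset$) is a useful clarification that the paper leaves implicit, and your coordinate-by-coordinate verification that $B_1'$ adjacency-resolves $M\cup N$ is more detailed than the paper's, but the construction and logic are the same.
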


\begin{proof}
	The first part of the statement follows directly from Theorem~\ref{Thm:1or4}. For the second part, suppose for contradiction that $M$ intersects two distinct single edges (i.e., paths of order $2$) and $N$ intersects one such edge. We will show that replacing these three single edges with a single path of order $5$ yields a smaller resolving set, contradicting the minimality of $B$.
	
	Assume the two single edges intersecting $M$ are $r_1b'_1$ and $r_2b'_2$, and the edge $c_1b'_3$ intersects $N$, where $r_1, r_2 \in M$, $c_1 \in N$, and $b_1, b_2, b_3 \in B$. Observe that the adjacency distance from each of $b'_1, b'_2, b'_3$ to any vertex in $M \cup N \setminus \{r_1, r_2, c_1\}$ in $H = H(G,B)$ is $2$. Therefore, the only vertices for which $b'_1, b'_2, b'_3$ may help distinguish adjacency are $r_1$, $r_2$, and $c_1$.

	Now, define a new set $B_1 := (B \setminus \{b_1, b_2, b_3\}) \cup \{a_{1,1}, a_{2,1}\}$. 
	In $H(G, B_1)$, the path $r_1a'_{1,1}c_1a'_{2,1}r_2$ has order $5$ and serves the same distinguishing function as the three original single edges for the vertices $r_1$, $r_2$, and $c_1$. This construction corresponds to replacing the three single edges with a path of order $5$, as proposed at the beginning of the proof.
	
	By Lemma~\ref{dim(G)<|S|}, we have $\dim(G) \leq |B_1|$. Since $|B_1| = |B| - 1$, this contradicts the minimality of $B$ as a basis for $G$.
\end{proof}

\begin{observation}\label{obs:1isolatexVertex}
	Let $5 \leq m \leq n$, and $B$ be a basis of $K_{1,m}\square K_{1,n}$. Then the graph $H := H(K_{1,m}\square K_{1,n}, B)$ contains at most one isolated vertex.
\end{observation}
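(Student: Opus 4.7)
The plan is to derive the statement as an immediate consequence of Lemma~\ref{B' is adjacency resolving set}. I would first observe that, by the very definition of $H$, every vertex in $B'$ has positive degree: each $a'_{i,j}$ arising from $a_{i,j}\in B$ is incident to the two edges $r_i a'_{i,j}$ and $c_j a'_{i,j}$, while each $r'_i$ or $c'_j$ arising from $r_i\in B$ or $c_j\in B$ is incident to the single edge $r_i r'_i$ or $c_j c'_j$. Invoking the first lemma of the paper, we may assume $a_{0,0}\notin B$, so this description is exhaustive and any isolated vertex of $H$ must belong to $M\cup N$. Concretely, $r_i\in M$ (respectively $c_j\in N$) is isolated in $H$ precisely when the row $R_i$ (respectively column $C_j$) of $G$ contains no vertex of $B$.

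Next I would argue by contradiction: suppose $H$ contains two distinct isolated vertices $u,v\in M\cup N$. Then $N_H(u)\cap B'=N_H(v)\cap B'=\emptyset$, so their adjacency representations coincide, $r_2(u\mid B')=r_2(v\mid B')=(2,2,\dots,2)$. Since $B$ is a basis of $K_{1,m}\square K_{1,n}$, Lemma~\ref{B' is adjacency resolving set} combined with Observation~\ref{observation} forces the sets $N_H(w)\cap B'$ to be pairwise distinct as $w$ ranges over $M\cup N$, contradicting the equality just displayed.

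No serious obstacle arises here: the argument is essentially a one-line corollary of Lemma~\ref{B' is adjacency resolving set} once the correspondence between isolated vertices of $H$ and rows or columns of $G$ disjoint from $B$ is made explicit. The only bookkeeping point is the handling of $a_{0,0}$, which is absorbed by the paper's opening lemma, and it is worth noting that the hypothesis $5\le m\le n$ plays no role — the conclusion is valid for any basis of $K_{1,m}\square K_{1,n}$.
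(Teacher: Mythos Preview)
Your proposal is correct and follows essentially the same approach as the paper: both argue that an isolated vertex of $H$ (necessarily lying in $M\cup N$) has adjacency representation $(2,2,\ldots,2)$ with respect to $B'$, and then invoke Lemma~\ref{B' is adjacency resolving set} to conclude that at most one such vertex can exist. Your added care in verifying that vertices of $B'$ are never isolated and in handling $a_{0,0}$ is more explicit than the paper's terse argument, but the underlying idea is identical.
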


\begin{proof}
	Observe that the adjacency representation of an isolated vertex in $H$ with respect to $B$ is $(2,2,\ldots,2)$. Since $B$ is an adjacency basis of $H$, every pair of distinct vertices must have different representations with respect to $B$. Therefore, there can be at most one isolated vertex in $H$.
\end{proof}

\begin{lemma}\label{lem:<=2isolatedEdges}
	If $5 \leq m \leq n$, then there exists a basis $B$ of $K_{1,m}\square K_{1,n}$ such that $ H := H(K_{1,m}\square K_{1,n}, B) $  consists only of at least one path of order $5$, at most one isolated vertex, and at most two single edges.
\end{lemma}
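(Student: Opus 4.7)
The plan is to start from a basis $B$ of $G := K_{1,m}\square K_{1,n}$ furnished by Lemma~\ref{M and N has one end}, so that $H := H(G,B)$ already consists of paths of order $1$, $2$, and $5$, with at most one single edge per side whenever both $M$ and $N$ contribute. Observation~\ref{obs:1isolatexVertex} bounds the number of isolated vertices by one, so the only new obligations are to show that $B$ can be chosen with at most two single edges and at least one path of order $5$.

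For the first assertion, suppose for contradiction that $H$ contains three or more single edges. By Lemma~\ref{M and N has one end} they all lie on the same side, say $M$, so we get three vertices $r_{i_1}, r_{i_2}, r_{i_3}\in B$ whose rows carry no other basis element. The central move is the swap
\[
B_1 := (B \setminus \{r_{i_1}, r_{i_2}, r_{i_3}\}) \cup \{a_{i_1,j_0}, a_{i_2,j_0}\}
\]
for a column $j_0$ yet to be chosen, making $|B_1| = |B|-1$. In $H_1 := H(G, B_1)$ the three old single edges collapse into the induced path $r_{i_1} - a'_{i_1,j_0} - c_{j_0} - a'_{i_2,j_0} - r_{i_2}$ while $r_{i_3}$ acquires the empty neighbourhood. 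The only neighbourhoods that change are those of $r_{i_1}, r_{i_2}, r_{i_3}, c_{j_0}$, and they are pairwise distinct and, thanks to the fresh symbols $a'_{i_1,j_0}, a'_{i_2,j_0}$, cannot collide with any unchanged neighbourhood. Once this is verified, Lemma~\ref{dim(G)<|S|} yields $\dim(G)\leq|B_1|<|B|$, contradicting the minimality of $B$.

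The main obstacle is choosing $j_0$ so that $r_{i_3}$ is the unique vertex of empty representation in $H_1$. If the (at-most-one) isolated vertex of $H$ lies in $N$, take it as $c_{j_0}$: the swap promotes it to the centre of the new $P_5$, and $r_{i_3}$ is the only empty vertex of $H_1$. Otherwise the isolated vertex of $H$ is some $r_{i_0}\in M$. Then every column of $G$ must carry an $A$-basis element, for a column without any basis element would create a second isolated vertex (contradicting Observation~\ref{obs:1isolatexVertex}), and the only alternative, a lone $c_j\in B$ in its column, would form an $N$-side single edge, contradicting our standing assumption. A pigeonhole argument on the $\geq n$ many $A$-basis elements distributed among the $m-4\leq n-4$ rows outside $\{i_0,i_1,i_2,i_3\}$ produces a column $j_0$ with an $A$-basis entry $a_{i^*,j_0}$ in a row $i^*$ containing further $A$-basis elements, and the augmented swap
\[
B_1 := (B\setminus\{r_{i_1},r_{i_2},r_{i_3},a_{i^*,j_0}\})\cup\{a_{i_0,j_0},a_{i_1,j_0},a_{i_2,j_0}\},
\]
still of size $|B|-1$, promotes $r_{i_0}$ to the unique singleton $\{a'_{i_0,j_0}\}$, keeps $r_{i^*}$ with a non-empty unique neighbourhood, and leaves $r_{i_3}$ as the sole empty vertex of $H_1$; Lemma~\ref{dim(G)<|S|} again gives the contradiction.

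The existence of at least one $P_5$ now follows from the counting identity $3p_5 + p_2 + p_1 = m + n$ with $p_1\leq 1$ and $p_2\leq 2$: since $m,n\geq 5$, we have $p_5\geq (m+n-3)/3\geq 7/3$, and so $p_5\geq 3\geq 1$.
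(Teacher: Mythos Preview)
Your overall strategy is reasonable, and the first swap together with the ``isolated vertex in $N$'' case is handled correctly.  The gap is in Case~B, where the isolated vertex $r_{i_0}$ lies in $M$.  Your augmented swap
\[
B_1 := (B\setminus\{r_{i_1},r_{i_2},r_{i_3},a_{i^*,j_0}\})\cup\{a_{i_0,j_0},a_{i_1,j_0},a_{i_2,j_0}\}
\]
does \emph{not} keep $r_{i^*}$ with a unique neighbourhood.  Because $\Delta(H)\le 2$ and row $i^*$ carries exactly the two $A$-entries $a_{i^*,j_0}$ and $a_{i^*,j'}$, the vertex $r_{i^*}$ is the middle of the $P_5$
\[
c_{j_0}\;-\;a'_{i^*,j_0}\;-\;r_{i^*}\;-\;a'_{i^*,j'}\;-\;c_{j'},
\]
so $c_{j'}$ is a leaf with $N_H(c_{j'})\cap B'=\{a'_{i^*,j'}\}$.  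After deleting $a_{i^*,j_0}$ you get $N_{H_1}(r_{i^*})\cap B_1'=\{a'_{i^*,j'}\}$ as well, while $c_{j'}$ is unchanged; hence $r_{i^*}$ and $c_{j'}$ collide and $B_1'$ fails to adjacency resolve $M\cup N$.  (This case is not vacuous: e.g.\ with $m=n=11$, $p_2=3$, $p_1=1$ one checks that a configuration with the isolated vertex in $M$ and all three single edges in $M$ satisfies the counting constraints.)  A small repair is possible---for instance, route one of the three new entries into column $j'$ rather than $j_0$, i.e.\ replace $a_{i_0,j_0}$ by $a_{i_0,j'}$, which lifts $c_{j'}$ to $\{a'_{i^*,j'},a'_{i_0,j'}\}$ and restores uniqueness---but as written the step fails.

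For comparison, the paper avoids this difficulty by a different reduction: it first argues that $H$ must already contain a $P_5$ (same counting you use at the end), and then replaces that $P_5$ together with the three single edges (five basis elements covering six vertices of $M\cup N$) by a configuration with four basis elements covering the same six vertices (Figure~\ref{fig:Axha}(a)).  Because the six $M\cup N$-vertices involved retain nonempty, distinct neighbourhoods throughout, the isolated-vertex bookkeeping that trips up your argument never arises.
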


\begin{proof}
	By Theorem~\ref{Thm:1or4}, there is a basis for $G:=K_{1,m}\square K_{1,n}$ containing only paths of order $1$, $2$ and $5$. 
	Suppose, for contradiction, that for every such basis $B$ of $G$, the associated graph $H$ contains more than two single edges (paths of order $2$).
	Observation \ref{obs:1isolatexVertex} guarantees that $H=H(G, B)$ contains at most one isolated vertex.
	Thus, we may assume that $B$ is a basis such that $H$ consists only of paths of order $5$, at most one isolated vertex and at least three single edges.

	By the construction of $H$, every path of order $5$ intersects $M\cup N$ in three vertices, while a single edge or an isolated vertex intersects $M\cup N$ in exactly one vertex. Since $n \geq m \geq 5$, Lemma~\ref{M and N has one end} implies that at least one of the sets $M$ or $N$ 
	contains at least three vertices that cannot be covered, even collectively, by the isolated vertex and the single edges.
	Hence, 
	there must exist a path of order $5$ in $H$. As illustrated in Figure~\ref{fig:Axha}(a), we can replace this $P_5$ and the three single edges by a new configuration involving fewer vertices from $B$, contradicting the minimality of $B$.
	Therefore, there must exist a basis $B$ for which $H$ contains at most two single edges.
\end{proof}

\begin{figure}[!t]
	\centering
	\includegraphics[width=.85\linewidth]{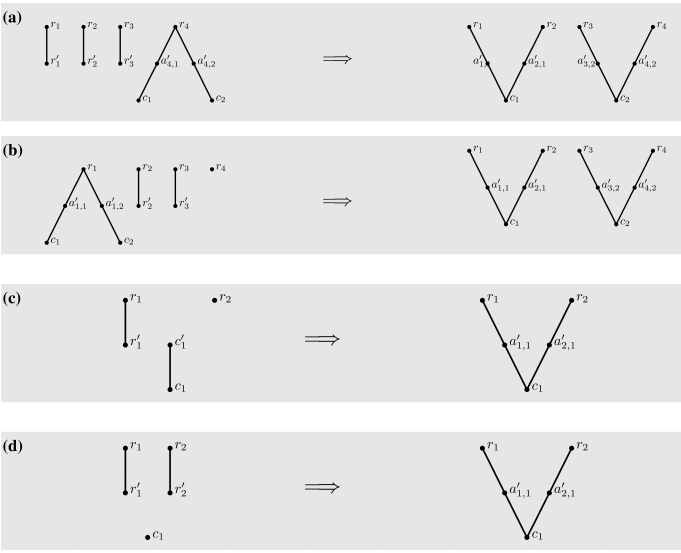}
	\caption{
		A set of four figures demonstrating adjacency relations and path resolutions in the graph $H(G, B)$ to analyze the basis and resolving properties:
		(a) depicts the adjacency resolving set applied to the graph $H(G, B)$;
		(b) illustrates resolving configurations for paths of order $4$;
		(c) shows adjacency in modified resolving structures; and,
		(d) demonstrates adjacency resolution with vertices of degree at least $3$.
	}
	\label{fig:Axha}
\end{figure}

\begin{lemma}\label{at most 2 single edges and isolated vertices}
	If $5 \leq m \leq n$, then there exists a basis $B$ of  $K_{1,m}\square K_{1,n}$ such that  $ H := H(K_{1,m}\square K_{1,n}, B) $ contains only of at least one path of order $5$, at most two single edges, and at most one isolated vertex. Moreover, the total number of  single edges and isolated vertices in  	$H := H(K_{1,m}\square K_{1,n}, B)$ is at most two.
	
\end{lemma}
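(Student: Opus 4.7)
The plan is to start from a basis $B$ guaranteed by Lemma~\ref{lem:<=2isolatedEdges} and show that, whenever the total number of single edges plus isolated vertices in $H:=H(G,B)$ strictly exceeds two, $B$ can be modified into another basis $B_1$ of the same cardinality whose corresponding graph $H(G,B_1)$ has strictly fewer such components. Since Lemma~\ref{lem:<=2isolatedEdges} already caps this total at three, only the configuration ``two single edges plus one isolated vertex'' has to be ruled out. Let $u_0$ denote the isolated vertex and $u_1,u_2\in B$ the $M\cup N$-endpoints of the two single edges; the argument splits into cases according to how $\{u_0,u_1,u_2\}$ is distributed between $M$ and $N$.

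\textbf{Case 1 (split configuration).} At least one of $u_0,u_1,u_2$ lies on a different side of the bipartition from the others. In each sub-case we can choose two $A$-vertices whose insertion, together with the removal of $u_1$ and $u_2$ from $B$, fuses the three problematic components into a single induced $P_5$. For example, if $u_1=r_{i_1},u_2=r_{i_2}\in M$ and $u_0=c_{j_0}\in N$, we set
\[
B_1 := (B\setminus\{r_{i_1},r_{i_2}\})\cup\{a_{i_1,j_0},a_{i_2,j_0}\},
\]
so that $H(G,B_1)$ contains the new induced $P_5$
$r_{i_1}-a'_{i_1,j_0}-c_{j_0}-a'_{i_2,j_0}-r_{i_2}$
in place of the two single edges and the isolated vertex, while every other component of $H$ is preserved. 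The remaining sub-cases (isolated vertex on the other side, or one single edge on each side) are handled by the same idea applied to the row or column that is shared between two of the three problematic vertices.

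\textbf{Case 2 (same-side configuration).} All three of $u_0,u_1,u_2$ are, WLOG, in $M$: $r_{i_0}$ is isolated and $r_{i_1},r_{i_2}\in B$. Since any $P_5$ in $H$ meets $M$ in at most two vertices, no single $P_5$ can absorb all three. We therefore ``borrow'' an existing $P_5$ of type $1M+2N$: writing $p_1,p_2$ for the numbers of $P_5$'s of types $2M+1N$ and $1M+2N$ in $H$, the covering equations $2p_1+p_2=m-3$ and $p_1+2p_2=n$ together with $m\le n$ give $p_2-p_1\ge 3$, and in particular $p_2\ge 1$. Fix one such $P_5$, say $c_{j_1}-a'_{i_g,j_1}-r_{i_g}-a'_{i_g,j_2}-c_{j_2}$, and set
\[
B_1 := (B\setminus\{r_{i_1},r_{i_2},a_{i_g,j_1}\})\cup\{a_{i_0,j_1},a_{i_1,j_1},a_{i_2,j_2}\}.
\]
A direct verification shows that the four affected components of $H$ are replaced in $H(G,B_1)$ by the two new induced $P_5$'s
\[
r_{i_0}-a'_{i_0,j_1}-c_{j_1}-a'_{i_1,j_1}-r_{i_1}\quad\text{and}\quad r_{i_g}-a'_{i_g,j_2}-c_{j_2}-a'_{i_2,j_2}-r_{i_2},
\]
while every other component of $H$ remains untouched. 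The symmetric ``all three in $N$'' sub-case uses a $P_5$ of type $2M+1N$, whose existence follows from $p_1=(2m-n+3)/3\ge 1$, valid under the hypothesis $n\le 2m$ of Theorem~\ref{n/2<m}.

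In each construction $|B_1|=|B|$, and a routine check—the new $P_5$'s provide distinct adjacency $1$-patterns while the untouched components retain their old distinguishing power—confirms that $B_1'$ adjacency-resolves $M\cup N$ in $H(G,B_1)$; because $B_1$ still intersects several rows and several columns, the argument in the proof of Lemma~\ref{dim(G)<|S|} guarantees that $B_1$ is itself a resolving set of $G$, hence a basis. The total number of single edges and isolated vertices has strictly dropped, contradicting the choice of $B$. The main obstacle is Case~2: no single $P_5$ can house three problematic vertices from a single side of the bipartition, and this is circumvented by recycling one $A$-element of an existing $P_5$ of the appropriate type, whose existence is what the counting argument above secures.
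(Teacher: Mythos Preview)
Your proof is correct and follows essentially the same approach as the paper: both start from Lemma~\ref{lem:<=2isolatedEdges}, assume the only remaining bad configuration (two single edges plus one isolated vertex), and split into the same two cases---the three small components distributed across $M$ and $N$, or all on one side---then fuse them into $P_5$'s, in the same-side case by borrowing a $P_5$ whose leaves lie on the opposite side. Your write-up is in fact more explicit than the paper's (which defers the constructions to Figure~\ref{fig:Axha}(b)--(d)), and you are right to flag that the ``all three in $N$'' sub-case needs $p_1\ge 1$, i.e.\ $n\le 2m$---this is the ambient hypothesis of the surrounding section $\tfrac{n}{2}\le m\le n$, which the paper uses tacitly; note only that your closing phrase ``contradicting the choice of $B$'' is slightly off, since $B$ was not chosen to minimise anything---what you have actually done is exhibit a basis $B_1$ satisfying the lemma's conclusion.
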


\begin{proof}
	
	By Lemma~\ref{lem:<=2isolatedEdges}, there exists a basis $B$ of $G:=K_{1,m}\square K_{1,n}$ such that $H = H(G,B)$ consists of  at least one path of order $5$, at most two single edges, and at most one isolated vertex. Assume  
	that $H$ contains exactly two single edges and one isolated vertex. 
	If  all these three structures are contained in the same set  among $M$  or $N$, then  since $5 \leq m \leq n$, there  exists a path of  order $5$ in $H$ whose both leaves lie in the other set. Then, as shown in Figure~\ref{fig:Axha}(b), one can  construct a new basis $B'$ for which $H(G,B')$ contains neither a single edge nor an isolated vertex.  On the other hand, if the ends of the two single edges and  the isolated  vertex are  distributed across $M$ and $N$, then as in Figures~\ref{fig:Axha}(c) and \ref{fig:Axha}(d),  a new basis of $G$  can again be found such that $H$ contains no single edges or isolated vertices.
	
\end{proof}

\subsection{Case 2: $m\leq 4$ or $n\leq 4$}
To prove Proposition~\ref{prop:small_casesA}, we begin with a lower bound lemma that, in fact, holds on the entire interval
$\frac{n}{2} \leq m \leq n$.

\begin{lemma}\label{lem: n/2<m<n}
	If $\frac{n}{2} \leq m \leq n$, then $\dim(K_{1,m} \square K_{1,n}) \geq n$. 
\end{lemma}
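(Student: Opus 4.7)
The plan is to suppose, for contradiction, that a resolving set $B$ of $G:=K_{1,m}\square K_{1,n}$ has cardinality $n-1$, and derive a contradiction from the hypothesis $\tfrac{n}{2}\le m\le n$. The first step is to pin down the structure of $B$. For any two distinct columns $C_j,C_{j'}$, a routine distance computation shows that $a_{1,j}$ and $a_{1,j'}$ are distance-resolved only by vertices of $C_j\cup C_{j'}$; hence at most one column of $G$ is missed by $B$. Since each vertex of $\{a_{0,0}\}\cup M$ lies in no column while every other vertex lies in a unique column, covering $n-1$ columns requires $|B\cap(A\cup N)|\ge n-1=|B|$, so $a_{0,0}\notin B$, $B\cap M=\emptyset$, and every column other than the missed one $C_{j_0}$ contains exactly one vertex of $B$. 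Setting $q:=|B\cap N|$ and $r:=|B\cap A|$ then gives $q+r=n-1$.

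Next I would split the argument according to whether $B$ also misses a row. If some row $R_{i^*}$ contains no vertex of $B$, then a direct computation of the distances from $r_{i^*}$ and from $c_{j_0}$ to every vertex of $B$ yields the same vector (entries equal to $2$ on the $c$-coordinates of $B$ and to $3$ on the $a$-coordinates), so $B$ fails to resolve the pair $\{r_{i^*},c_{j_0}\}$. Hence every row $i$ must contain at least one $a$-vertex of $B$; let $k_i:=|B\cap R_i\cap A|\ge 1$, so $\sum_{i=1}^m k_i=r$.

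The crux is then to upgrade $k_i\ge 1$ to $k_i\ge 2$ for every $i$. Suppose some row $i$ has $k_i=1$ with unique $a$-vertex $a_{i,j^*}\in B$. Since column $C_{j^*}$ carries exactly one vertex of $B$, that vertex must be $a_{i,j^*}$, and in particular $c_{j^*}\notin B$. A direct check then shows that the metric codes of $r_i$ and of $c_{j^*}$ with respect to $B$ coincide (a single $1$ at the coordinate of $a_{i,j^*}$, $2$'s on the $c$-coordinates, and $3$'s elsewhere), again contradicting that $B$ is resolving. Consequently $k_i\ge 2$ for every $i$, so $r=\sum_{i=1}^m k_i\ge 2m\ge n$, in contradiction with $r\le q+r=n-1$.

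The main obstacle is spotting the right colliding pairs. Once one notices that the metric code of any vertex of $M\cup N$ with respect to a subset of $A\cup N$ is determined by elementary row/column incidence data (each entry lying in $\{1,2,3\}$), the two identifications $\{r_{i^*},c_{j_0}\}$ and $\{r_i,c_{j^*}\}$ arise naturally from the pigeonhole constraints $q+r=n-1$ and $\sum_i k_i\ge 2m>n-1$; the rest of the argument is bookkeeping.
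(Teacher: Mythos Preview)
Your proof is correct and follows essentially the same route as the paper's: both argue by contradiction from $|B|=n-1$, force $B\subseteq A\cup N$ with each non-missed column containing exactly one $B$-vertex, then use the pair $\{r_{i^*},c_{j_0}\}$ to show every row is hit and the pair $\{r_i,c_{j^*}\}$ to upgrade this to $k_i\ge 2$, finishing with $2m\ge n>n-1$. The only cosmetic difference is that for the ``at most one missed column'' step you use the witnesses $a_{1,j},a_{1,j'}$ whereas the paper uses $c_j,c_k$; both pairs work for the same reason.
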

\begin{proof}
	Assume, to the contrary, that $S$ is a resolving set for $K_{1,m} \square K_{1,n}$  with $|S| = n-1$. 
	We first show that $S$ intersects exactly $n-1$ columns, each with one vertex, and that there is one column with no vertex in $S$.  Suppose instead that $S$ intersects at most $n-2$ distinct columns. Then
	there exist two columns, say $j$ and $k$, such that neither contains any vertex from $S$. Therefore, $c_j$ and $c_k$ would have the same metric code with respect to $S$
	, contradicting the assumption that $S$ is a resolving set.
	Hence, $S$ must intersects exactly $n-1$ columns, each with exactly one vertex. This implies that $r_i \not\in S$ for all $1 \leq i \leq m$. 
	
	Next, we show that every row must contain at least one vertex from $S$. 
	Let $j'$ be the column that  contain no vertex from $S$. If there exists a row, $i'$,  that also contains no vertex from  $S$, then $r_{i'}$ and $c_{j'}$ would have identical metric codes with respect to $S$,  again contradicting the assumption that $S$ resolves the graph. Therefore, every row must contain at least one vertex from $S$.
	
	Finally, we claim that each row must contain at least two vertices from $S$.   Consider the $i$th row and let $a_{i,j} \in S$.  Since $a_{i,j} \in S$, $c_j$ cannot be in $S$ (as this column contains exactly one vertex from $S$).
	To distinguish $r_i$ from $c_j$, there must be at least one other vertex in $S$ either in the $i$th row or in the $j$th column. However, by the previous argument, each of the $n-1$ columns already contains exactly one vertex from $S$, namely $a_{i,j}$, so the required additional vertex must lie in the $i$th row. 
	Hence,  every row contains at least two vertices from $S$. This implies that, $|S| \geq 2m \geq n$,  contradicting the assumption that $|S| = n-1$. Therefore, $\dim(K_{1,m} \square K_{1,n}) \geq n$.
\end{proof}

\begin{proposition}\label{prop:small_casesA}
	Let ${n\over2}\leq m\leq n$, $2m-n<3$, and either $m\leq 4$ or $n\leq 4$. Then $\dim(K_{1,m}\square K_{1,n})=n$.
\end{proposition}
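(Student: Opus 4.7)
My plan is to split into a matching lower and upper bound. The lower bound $\dim(K_{1,m}\square K_{1,n})\ge n$ is immediate from Lemma~\ref{lem: n/2<m<n}, so the real task is to exhibit a resolving set $B$ of cardinality $n$. Under the standing hypotheses we have $2m-n\in\{0,1,2\}$, and with the restriction $m\le 4$ or $n\le 4$ (and $m\ge 2$, the value $m=1$ already being covered by Theorem~\ref{thm:SmallCases}), the pair $(m,n)$ ranges over the short list $(2,2),(2,3),(2,4),(3,4),(3,5),(3,6),(4,6),(4,7),(4,8)$. Rather than checking these nine cases individually, I would propose one unified construction covering all three values of $2m-n$ and verify it combinatorially.

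The proposed set is
\[
B := \{a_{i,\,2i-1},\ a_{i,\,2i} : 1\le i\le \lfloor n/2\rfloor\}\ \cup\ \{a_{m,\,n-1} : n\text{ is odd}\},
\]
placing two consecutive $a$-vertices in each of the first $\lfloor n/2\rfloor$ rows and, when $n$ is odd, adjoining one extra vertex in row $m$. A direct count gives $|B|=n$ in each of the three regimes $n\in\{2m,2m-1,2m-2\}$. To verify $B$ is resolving, I would sort the non-$B$ vertices by the alphabet of their metric codes, using $d((r_i,c_j),(r_{i'},c_{j'}))=d_{K_{1,m}}(r_i,r_{i'})+d_{K_{1,n}}(c_j,c_{j'})$: the vertex $a_{0,0}$ produces the all-$2$'s code; each $r_i$ and each $c_j$ produces a $\{1,3\}$-code whose $1$'s mark the $B$-members lying in that row or column; and each $a_{i,j}\notin B$ produces a $\{2,4\}$-code whose $2$'s mark the $B$-members lying in row $i$ or column $j$. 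The three alphabets are pairwise disjoint, so inter-type collisions are impossible. Within the $r_i$'s the pairs $\{2i-1,2i\}$ are disjoint across $i\le\lfloor n/2\rfloor$, separating them; within the $c_j$'s each column carries at most one $B$-vertex, except column $n-1$ in the odd case; and for the $a_{i,j}$'s the set $\{2i-1,2i,\mathrm{pos}(j)\}$ of $2$-positions in the code recovers $(i,j)$ uniquely.

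The main obstacle will be the row $r_m$ when $m>\lfloor n/2\rfloor$, since its code then reduces to the all-$3$ vector or to a single $1$, and could in principle collide with some $c_j$. I would dispose of this by splitting on $2m-n$. If $2m-n=2$, then row $m$ is empty, and because every column contains a $B$-vertex, $r_m$'s all-$3$ code is unique. If $2m-n=1$ (so $n$ is odd), the extra vertex $a_{m,n-1}$ places $r_m$'s solitary $1$ at the position of $a_{m,n-1}$; the same position carries a $1$ only in $c_{n-1}$, but $c_{n-1}$ has a second $1$ coming from $a_{m-1,n-1}$, while $c_n$ is the unique empty column and so has the all-$3$ code. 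Thus $r_m$, $c_{n-1}$ and $c_n$ are mutually distinguished. A short enumeration over the nine listed pairs $(m,n)$ confirms that no other conflicts arise, completing the proof.
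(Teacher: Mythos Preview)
Your proposal is correct and follows essentially the same route as the paper: the lower bound via Lemma~\ref{lem: n/2<m<n}, then an explicit resolving set of size $n$ split according to $2m-n\in\{0,1,2\}$, with the same ``two $a$-vertices per row'' pattern. The only substantive difference is in the case $2m-n=1$: the paper adjoins $r_m$ as the $n$th landmark, whereas you adjoin $a_{m,n-1}$; both work. One small remark: the paper verifies its constructions by a general argument valid for all $m,n$ in range (the restriction $m\le 4$ or $n\le 4$ is never used in the upper bound), while you fall back on enumerating the nine pairs---your preceding combinatorial analysis is in fact already general enough that this finite check is unnecessary.
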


\begin{proof}
	The lower bound is clear by Lemma~\ref{lem: n/2<m<n}. To see the upper bound, it is enough to present a resolving set of size $n$. We will establish this by considering three possible cases.
	
	\textbf{Case (i): $2m-n=0$.}
	
	We claim that $S = \{a_{1,1}, a_{1,2}, a_{2,3}, a_{2,4}, \ldots, a_{m,n-1}, a_{m,n}\}$ is a resolving set of size $n$.
	
	Note that if $a_{i,j} \in S$, then elements of the $i$th row are resolved from other rows. Similarly, elements of the $j$th column are resolved from other columns. By the same argument, whenever $a_{i,j} \in S$, $r_i$ is resolved from $r_{i'}$ for all $i' \neq i$, and $c_j$ is resolved from $c_{j'}$ for all $j' \neq j$. Therefore, each row and each column of the graph has a member in $S$, ensuring that all elements of $A$, all $r_i$'s ($1 \leq i \leq m$), and all $c_j$'s ($1 \leq j \leq n$) are resolved from each other.
	
	Also, note that an element of $S$ is of odd distance with $r_i$'s and $c_j$'s but of even distance with elements of $A$. Therefore, the only candidates for two vertices to have the same metric code with respect to $S$ are $r_i$ and $c_j$ for some $i, j$. However, this is impossible due to the fact that each row has two members in $S$, while each column has only one. This completes the proof for the first case.
	
	\textbf{Case (ii): $2m-n=1$.}
	
	We define $S = \{a_{1,1}, a_{1,2}, a_{2,3}, a_{2,4}, \ldots, a_{m-2,n-2}, a_{m-1,n-1}, r_m\}$ as a resolving set of size $n$ for this case.
	
	All columns except the last one and all rows intersect $S$. Following a similar argument to the previous case, it is evident that all vertices except those in the last column and last row are resolved. Since $r_m$ is in $S$ and each column but the last one contains a member of $S$, all members of the $m$th row are resolved from each other. The only remaining vertices to be checked are those in the last column. It's clear that $c_n$ is the unique vertex with metric code $(3,3,\ldots, 3, 2)$. Additionally, $a_{i,n}$, where $1 \leq i \leq m-1$, is the only vertex with distance $4$ from every member of $S\setminus\{r_m\}$ and distance $2$ from $r_m$. Thus, $S$ serves as a resolving set in this case.
	
	\textbf{Case (iii): $2m-n=2$.}
	
	In this case, $S = \{a_{1,1}, a_{1,2}, a_{2,3}, a_{2,4}, \ldots, a_{m-1,n-1}, a_{m-1,n}\}$ serves as a resolving set of size $n$ for the graph.
	
	Following a similar argument to the first case, it suffices to check elements of the last row. The only vertex with all entries of its metric code equal to $3$ is $r_m$. Let $x$ be the unique vertex of $S$ in the $j$th column. Then, $a_{m, j}$ is the only vertex with distance $2$ from $x$ and $4$ from every element of $S\setminus\{x\}$. Thus, $S$ is a resolving set for $G$.
\end{proof}

With Proposition~\ref{prop:small_casesA} we have settled every configuration in which $\min\{m,n\}\le 4$ and $2m-n<3$.
The only unresolved cases with $m\leq 4$ or $n\leq 4$ therefore satisfy $2m-n\geq 3$.
To handle these instances we first show that, when $2m-n\geq 3$, no vertex of $M$ can belong to a basis of cardinality~$n$.

\begin{lemma}\label{lem:|S|=n, Sar Satr}
	Let ${n\over2}\leq m\leq n$, $2m-n\geq 3$. If $S$ is a basis of size $n$ for $K_{1,m}\square K_{1,n}$, then for any $1\leq i\leq m$, $r_i\not\in S$.
\end{lemma}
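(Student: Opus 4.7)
I would argue by contradiction. Suppose some $r_i\in S$; by symmetry take $r_1\in S$. The strategy is a double-counting that combines two separation demands: columns must be hit often enough that no two $c_j$'s coincide in metric code, and each row $i\neq 1$ must be hit \emph{at least twice} by $A$-vertices of $S$ to avoid a forced collision between $r_i$ and some $c_q$. Summing these demands forces $|A\cap S|\ge 2(m-1)$, and then comparing with $|S|=n$ gives the required contradiction with $2m-n\ge 3$.

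The first step is to nail down $S\cap(M\cup\{a_{0,0}\})$. Only the vertices in $S\cap(A\cup N)$ actually lie in any column, so if $|S\cap M|+[a_{0,0}\in S]\ge 2$ then fewer than $n-1$ columns are covered by $S$, and any two uncovered columns $j_1,j_2$ would yield $c_{j_1},c_{j_2}$ with identical metric codes (every $s\in S$ gives the same distance to $c_{j_1}$ and $c_{j_2}$: $1$ if $s=a_{0,0}$, $2$ if $s$ is a landmark in $M\cup N$, and $3$ if $s=a_{i',j'}$ with $j'\notin\{j_1,j_2\}$). Hence $S\cap M=\{r_1\}$, $a_{0,0}\notin S$, and the remaining $n-1$ vertices of $S$ must cover exactly $n-1$ distinct columns with one vertex each, leaving a single uncovered column $j_0$.

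The heart of the argument is the claim that, for every $i\in\{2,\dots,m\}$, row $i$ meets $S$ in at least two $A$-vertices. If row $i$ meets $S$ in \emph{zero} $A$-vertices, then $r_i$ and $c_{j_0}$ share a metric code: both are at distance $2$ from $r_1$, at distance $2$ from every $c_{j'}\in S$, and at distance $3$ from every $a_{i',j'}\in S$. If row $i$ meets $S$ in \emph{exactly one} $A$-vertex $a_{i,q}$, then (since the unique $S$-vertex of column $q$ must be $a_{i,q}$ itself) we have $c_q\notin S$; a coordinate-wise check then shows $r_i$ and $c_q$ agree on every landmark: both give $2$ at $r_1$, both give $1$ at $a_{i,q}$, both give $3$ at each other $a_{i',j'}\in S$ (which satisfies $i'\ne i$ and $j'\ne q$), and both give $2$ at each $c_{j'}\in S$. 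Verifying this second collision is the main obstacle, since one has to track all the remaining $S$-vertices and rule out separation by each of them.

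Summing the inequality $|\{j:a_{i,j}\in S\}|\ge 2$ over $i=2,\dots,m$ yields $|A\cap S|\ge 2(m-1)$. Since $|A\cap S|+|N\cap S|=n-1$, this gives
\[
n-1\ge 2(m-1),
\]
i.e., $2m-n\le 1$, contradicting the hypothesis $2m-n\ge 3$. This completes the argument.
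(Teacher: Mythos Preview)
Your argument is correct and follows essentially the same route as the paper's proof: both reduce to showing that at most one $r_i$ can lie in $S$, that the remaining $n-1$ landmarks occupy distinct columns, and that every row $i\neq i_0$ must contain at least two $A$-vertices of $S$ (via the $r_i$--$c_q$ collision), yielding $n\ge 2m-1$ against $2m-n\ge 3$. Your treatment is in fact slightly more careful than the paper's, since you explicitly rule out $a_{0,0}\in S$, a point the paper glosses over.
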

\begin{proof}
	Let $S$ be a basis of size $n$ for $G=K_{1,m}\square K_{1,n}$. Clearly, at most one of the columns can have no intersection with $S$. Now, if $S$ contains $r_i$ and $r_{i'}$ for distinct $i$ and $i'$, then $n=|S|\geq 2+n-1=n$ which is a contradiction. Thus, $S$ contains at most one $r_i$, for some $i$.
	
	We will show that this also cannot happen. Assume to the contrary that $r_{i_0}\in S$. Then each of the $n-1$ other elements of $S$ must intersect one distinct column, meaning there is exactly one column without any member of $S$. Without loss of generality, assume this is the last column.
	
	If the $j$th row does not intersect $S$, then $r_j$ and $c_n$ have the same metric code with respect to $S$. This implies that each row must intersect $S$. As $r_{i_0}$ is the only $r_i$ in $S$, this intersection for the $l$th row happens in $a_{l,k}$ for some $k\neq n$, and $l\neq i_0$.
	
	Now to resolve $r_l$ and $c_k$, there must be another member of $S$ on the $l$th row, i.e., every row except $i_0$ has at least two members in $S$. Thus, $n=|S|\geq 2(m-1) +1=2m-1\geq n+2$, which is a contradiction.
\end{proof}

\begin{remark}\label{rem:m=n}
	Let $m=n\geq 3$ and $S$ be a basis of size $n$ for  $K_{1,m}\square K_{1,n}$. By Lemma~\ref{lem:|S|=n, Sar Satr}, for any $1\leq i\leq m$, $r_i\not\in S$. As the graph is symmetric, one may assume there is no $c_j$ in $S$, repeat the argument, and conclude that not only for any $1\leq i\leq m$, $r_i\not\in S$ but also for any $1\leq j\leq n$, $c_j\not\in S$.
\end{remark}

Let $m\leq 4$ or $n\leq 4$. In Proposition~\ref{prop:small_casesA}, we considered the cases in which $2m-n<3$. Thus, to obtain the metric dimension of $K_{1,m}\square K_{1,n}$, it is sufficient to consider the cases in which $2m-n\geq 3$, which are, in fact, (i) $m=n=3$, (ii) $m=n=4$, and (iii) $m=4, n=5$.

\begin{proposition}\label{prop:small_casesB}
	Let ${n\over2}\leq m\leq n$, $2m-n\geq 3$, and either $m\leq 4$ or $n\leq 4$. Then $\dim(K_{1,m}\square K_{1,n}) =  n+\lfloor{2m-n\over 3}\rfloor = n+1$.
\end{proposition}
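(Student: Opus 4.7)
My plan is to prove the equality $\dim(K_{1,m}\square K_{1,n})=n+1$ by establishing a matching lower bound via contradiction and an explicit upper-bound construction for each of the three remaining configurations $(m,n)\in\{(3,3),(4,4),(4,5)\}$.

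For the lower bound, suppose for contradiction that $S$ is a basis of $G:=K_{1,m}\square K_{1,n}$ with $|S|=n$. Lemma~\ref{lem:|S|=n, Sar Satr} gives $r_i\notin S$ for every $i$. Since two $S$-free columns would force the corresponding $c_{j_1},c_{j_2}$ to share a code, at most one column is $S$-free; combined with $|S|=n$, this forces every nonempty column to contain exactly one vertex of $S$, with at most one exception carrying two. The critical observation is the following: if row $i$ contains exactly one $a$-vertex $a_{i,j}$ of $S$ and column $j$ contains only this single $S$-vertex, then $r(r_i\mid S)=r(c_j\mid S)$, because both codes read $1$ at position $a_{i,j}$ while every other $S$-vertex lies outside $R_i\cup C_j$ and contributes the same entry ($2$ if it is a $c$-vertex, $3$ if it is an $a$-vertex) to each code. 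Hence every ``solo row'' (a row whose $S$-intersection is a single $a$-vertex) must place that vertex in the unique column hosting two $S$-vertices, and that column admits at most two such rows. Additionally, at most one row can be ``empty'' of $a$-vertices from $S$ (two empty rows would give $r_{i_1}=r_{i_2}$), and if some row is empty then every column must intersect $S$ (otherwise $r_i$ would match the $c$-vertex of an $S$-free column). A short case split on whether some column is $S$-free then yields
\[
|S\cap A|\;\geq\;2m-2,
\]
so $|S\cap N|=n-|S\cap A|\leq n-2m+2\leq -1$ under the hypothesis $2m-n\geq 3$, a contradiction.

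For the upper bound, I would exhibit the explicit resolving sets
\begin{align*}
S_{3,3}&=\{a_{1,1},a_{1,2},a_{2,2},a_{2,3}\},\\
S_{4,4}&=\{a_{1,1},a_{1,2},a_{2,3},a_{2,4},a_{3,4}\},\\
S_{4,5}&=\{a_{1,1},a_{1,2},a_{2,3},a_{2,4},a_{3,4},a_{3,5}\},
\end{align*}
of sizes $4$, $5$, and $6$ respectively. Each follows a ``pair per row, shifted'' template reminiscent of Theorem~\ref{2<m<n/2}, with a single extra vertex to accommodate the regime $2m-n\geq 3$. Verification is a bounded computation with a uniform recipe: $a_{0,0}$ is the unique vertex whose code contains only entries in $\{1,2\}$; each $r_i$ is identified by the $1$'s marking $S$-vertices in $R_i$; each $c_j$ by the $1$'s marking $S$-vertices in $C_j$; and each $a_{i,j}\notin S$ is pinpointed by the pattern of $2$'s marking exactly the $S$-vertices sharing its row or column.

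The main obstacle is the bookkeeping in the lower bound, specifically confining every solo row's $a$-vertex to the single column that hosts two $S$-vertices and capping the number of such rows at two. Once this structural constraint is in place, the remaining count is short and the upper-bound verification is mechanical.
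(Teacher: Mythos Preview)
Your proposal is correct and follows the same overall architecture as the paper: a lower bound by contradiction starting from Lemma~\ref{lem:|S|=n, Sar Satr}, hinging on the observation that a row meeting $S$ in a single $a$-vertex $a_{i,j}$ forces $r_i$ and $c_j$ to share a code unless column $j$ carries a second $S$-vertex, followed by explicit size-$(n+1)$ constructions for $(m,n)\in\{(3,3),(4,4),(4,5)\}$.

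The execution of the lower bound differs in a useful way. The paper first argues separately that no $c_j$ lies in $S$ (invoking symmetry via Remark~\ref{rem:m=n} when $m=n$, and a bespoke two-case analysis when $(m,n)=(4,5)$), then deduces $S\subseteq A$ and counts $|S|\ge 2(m-1)$. You bypass this detour: by bounding the number of empty rows ($\le 1$) and solo rows ($\le 2$, all forced into the unique doubly-occupied column) and noting these cannot both be extremal, you obtain $|S\cap A|\ge 2m-2$ directly and reach the contradiction $|S\cap N|\le n-2m+2\le -1$ without ever needing $S\cap N=\emptyset$. This unifies the three instances into a single count. Your upper-bound sets also differ from the paper's (for $(4,4)$ the paper uses $r_4$ whereas you stay inside $A$), but both families are easily checked to resolve.
\hfill$\Box$
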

\begin{proof}
	First, we show that any resolving set for $G$ has at least $n+1$ elements. Then we provide a resolving set of size $n+1$.
	
	There are only three cases that we need to consider, due to the argument presented before the statement:
	(i) $m=n=3$,
	(ii) $m=n=4$, and
	(iii) $m=4, n=5$.
	
	Assume to the contrary that $S$ is a resolving set for $G$ and $|S|=n$. First of all, by Lemma~\ref{lem:|S|=n, Sar Satr}, for any $1\leq i\leq m$, $r_i\not\in S$. Also, if $n=m$, by Remark~\ref{rem:m=n}, for any $1\leq j\leq n$, $c_j\not\in S$ as well.
	
	We prove that this is also true in case (iii). Assume that $c_r, c_j\in S$ for $r\neq j$. Thus, we have $n-2=m-1$ remaining members in $S$. On the other hand, we have at most one row not intersecting $S$. This means that $m-1$ members of $S$ intersect $m-1$ rows each in exactly one $a_{l,k}$ for some $l, k$. Now assume that $a_{l,k}\in S$, where $k\not\in \{r,j\}$. Then to resolve $c_k$ and $r_l$, there must be another member of $S$ on the $k$th column. Therefore, $|S|\geq 2 + 2(m-1) = 2m = 2n-2 \geq n+1$, which is a contradiction as $n\geq 4$.
	
	Now, assume that there exists a unique column, say $j_0$, that does not intersect $S$. It means that there is a unique column, say $j_1$, with two members of $S$, and any other column except $j_0$ and $j_1$ has exactly one member of $S$. Note that we may have $j = j_1$.
	
	If the $l$th row does not intersect $S$, then $r_l$ and $c_{j_{0}}$ have the same metric code with respect to $S$. Hence, every row must intersect $S$. With a similar argument to the previous case, if $a_{l,k}\in S$ for some $1\leq l\leq m$ and $k\not\in \{j, j_1\}$, then row $l$ has to have another member of $S$ to resolve $r_l$ and $c_k$. Thus, each of these rows intersects $S$ in at least two members. Since there are at least $m-3$ rows of this type, $n=|S|\geq 3 + 2(m-3) + 1 = 2m-2 \geq n+1$, as $n\geq 5$. So far, we have proved that in all the cases (i)-(iii), $S \subseteq A$. As discussed earlier, if $a_{i,j}\in S$, then row $i$ has another member in $S$ to resolve $r_i$ and $c_j$. On the other hand, at most one row has no member in $S$. Therefore, $n=|S|\geq 2(m-1)=2(n-2)$, which is a contradiction as $n\geq 3$. This completes the proof for the lower bound.
	
	To see the upper bound, set $S=\{a_{1,1}, a_{1,2}, a_{2,3}, a_{3,3}\}$ if $m=n=3$; $S=\{a_{1,1}, a_{1,2}, a_{2,3}, a_{3,3}, r_4\}$ if $m=n=4$; and $S=\{a_{1,1}, a_{1,2}, a_{2,3}, a_{3,3}, a_{4,4}, a_{4,5}\}$ if $m=4, n=5$.
\end{proof}

\subsection{Proof of Theorem~\ref{n/2<m}}
We are now ready to establish the main result for the regime $\frac{n}{2} \leq m\leq n$.
\begin{proof}
	Let $G=K_{1,m}\square K_{1,n}$.
	We will show that $G$ has a basis  of cardinality $n+\lfloor{2m-n\over 3}\rfloor$.

	If $m \leq 4$ or $n \leq 4$, the claim follows from Propositions~\ref{prop:small_casesA} and \ref{prop:small_casesB}. Otherwise, if $5 \leq m \leq n$, then
	by Lemma~\ref{at most 2 single edges and isolated vertices}, there exists a basis $B$ of $G$  such that $ H := H(G,B) $ consists only of at least one path of order $5$, at most two single edges, and at most one isolated vertex. Moreover, the total number of single edges and isolated vertices in $H = H(G,B)$ is at most two. It is also straightforward that a path of order $5$ in $H$ intersects $M \cup N$ in three vertices, while a single edge or an isolated vertex intersects $M \cup N$ in exactly one vertex.
	
	Let $m+n=3k+r$, for some integers  $k$ and $r$ with $0\leq r\leq 2$ and $r<k$. We consider the following cases:
	
	\textbf{Case 1:} $r=0$.

	Here, $m+n$ is divisible by three, thus the total number of single edges and isolated vertices must also be divisible by three. However, by Lemma~\ref{at most 2 single edges and isolated vertices}, this number is at most two, which forces it to be zero. Hence, $H$ contains no single edges or an isolated vertices.
	Therefore, $H$ is disjoint union of some paths of order $5$. Let $s$ of these paths  contain a vertex of degree $2$ in $M$, and let $t$ of them  contain such a vertex in $N$.  Then:
	$$
	\left\{
	\begin{array}{ll}
		s + 2t = n \\
		2s + t = m
	\end{array}
	\right.
	$$
	Solving this system gives $s = \frac{2m-n}{3}$. Since $m+n$ is divisible by $3$, the remainder of $2m-n$ divided by $3$ is $0$, and $|B| = 2s + (n - s) = n + s = n + \lfloor \frac{2m-n}{3} \rfloor$.
	
	\textbf{Case 2:} $r=1$.

	In this case, in addition to some paths of order $5$, $H$ may contain at most one single edge or one isolated vertex.
	
	If the isolated vertex  or an endpoint of the single edge lies in $N$,  set $n' = n-1$, and  apply the same reasoning as in Case $1$. Then, $|B| = n-1 + \frac{2m-(n-1)}{3}$. Note that $r=1$ implies that the remainder of $2m-n$ divided by $3$ is $2$, and $|B| = n-1 + \lfloor \frac{2m-n}{3} \rfloor + 1 = n + \lfloor \frac{2m-n}{3} \rfloor$.

	If  instead the isolated vertex  or an endpoint of the single edge is in $M$, set $m' = m-1$.  Then,   $|B| = n + \frac{2(m-1)-n}{3} = n + \lfloor \frac{2m-n}{3} \rfloor$.
	
	\textbf{Case 3:} $r=2$.

	Here, $H$  consists of some paths of order $5$, and either one single edge and one isolated vertex, or  two single edges. Note that in this case, the remainder of $2m-n$ divided by $3$ is $1$. 
	
	Depending on the placement of these isolated elements,we consider the following three subcases:
	
	- If one endpoint of a single edge is in  one of the sets $M$ or $N$ and the isolated vertex  or an endpoint of the other single edge is in the other set, then put $n' = n-1$ and $m' = n-1$.  Substituting these values, 
	$$ |B| = n-1 + \frac{2(m-1)-(n-1)}{3} + 1 = n + \lfloor \frac{2m-n}{3} \rfloor. $$
	
	- If the endpoint of the single edge and the isolated vertex  (or an endpoint of the other single edge)  are both in $M$, set $m' = m-2$. Then,
	$$ |B| = n + \frac{2(m-2)-n}{3} + 1 = n + \lfloor \frac{2m-n}{3} \rfloor. $$
	
	- Similarly, if the endpoint of the single edge and the isolated vertex  (or an endpoint of the other single edge)  are both in $N$,  set $n' = n-2$.  Then,
	$$ |B| = n-2 + \frac{2m-(n-2)}{3} + 1 = n + \lfloor \frac{2m-n}{3} \rfloor. $$
\end{proof}

\section{Practical Applications}
The Cartesian product of two star graphs, $K_{1,m}\square K_{1,n}$, arises naturally in structured sensing layouts that combine a single master station with two orthogonal sets of relay lines.  The central vertex $a_{0,0}$ corresponds to a master controller or gateway.  Horizontal relays $r_1,\dots ,r_m$ and vertical relays $c_1,\dots ,c_n$ act as intermediate collectors, while the peripheral vertices $a_{i,j}$ designate the physical positions where transducers are installed.  Each $a_{i,j}$ is linked to exactly two relays, its row-relay $r_i$ and its column-relay $c_j$, so that messages always have two independent paths to the master.  This simple-yet-redundant wiring pattern is easy to assemble in practice and easy to reason about mathematically; 
our results then determine both how few landmarks are needed and where to place them to ensure that every node remains distinguishable at all times.

Consider first a precision-agriculture deployment.  A greenhouse often contains parallel irrigation pipes or cable trays running east-west and another set running north-south.  Wireless or wired gateways are mounted at the ends of each east-west pipe (forming ${r_i}$'s) and at the ends of each north-south pipe (forming ${c_j}$'s), all reporting to a central automation computer $a_{0,0}$.  Moisture, nutrient, and temperature sensors are installed at the pipe intersections, these are the $a_{i,j}$ vertices.  From an economic standpoint it is infeasible to treat all of these sensors as high-end nodes with long-range radios or large batteries.  Instead, the farmer designates a basis, the minimal resolving set found by our $\mathcal O(m+n)$ algorithm, as landmark nodes equipped with stronger radios (or wired back-haul).  By design, every sensor has a unique hop-count vector to those landmarks.  Suppose a low-power sensor located at $a_{i,j}$ can report that its message reaches landmark $L_3$ in two hops and landmark $L_7$ in three hops.  If no other vertex in the grid shares this distance vector, the control computer deduces unambiguously that $a_{i,j}$ is the sender.  The hop-count signature thus acts as the address, avoiding the need for long identifiers or high-power radios at ordinary nodes.  Reporting a compact hop-count vector instead of explicit row-column coordinates mirrors common practice in low-power ZigBee, LoRaWAN-Class A, and TinyOS networks, where minimising payload size directly translates into lower airtime and longer battery life.  Choosing the landmark indices from our constructive proofs (for example, the $n-1$ vertices of Theorem~\ref{2<m<n/2} when $1<m<\tfrac n2$, or the $n+\lfloor \frac{2m-n}{3}\rfloor$ vertices of Theorem~\ref{n/2<m} otherwise) guarantees that no extra hardware is purchased and no intersection is left ambiguous.

A second illustration is real-time safety monitoring inside a warehouse.  The building's main fire-panel at the center plays the role of $a_{0,0}$.  Addressable loops radiate horizontally to floor-level junction boxes $r_i$ mounted along one long wall, and vertically to junction boxes $c_j$ along the adjacent wall.  Smoke, heat, or gas detectors placed at ceiling-grid intersections constitute the $a_{i,j}$ vertices.  Outfitting every detector with a full communication board would be costly and power-hungry, yet the fire code requires that the control room identify the exact detector in alarm within seconds.  Deploying the landmark set prescribed by our metric basis means that only those few detectors need addressable isolator modules; every other detector can be a simpler, cheaper unit that reports its hop distances to the isolators.  

Sensor networks also face operational challenges including limited range, communication noise, and partial failures.  Although hop-count or signal-strength readings are noisy, the distance vector (metric representation) assigned by a resolving set is highly tolerant to such uncertainty.  Any two vertices differ in at least one coordinate of their ideal code; and, in this grid, usually by two or more hops; so a small $\pm 1$-hop error rarely collapses two codes into an identical pattern.  The landmark codes thus act as short error-correcting signatures: the sink can choose the vertex whose ideal code is nearest (in Hamming distance) to the noisy vector it receives, and mis-identification remains improbable unless several coordinates are perturbed simultaneously. The basis therefore provides a provable lower bound on the number of high-quality landmarks required; engineers may add extra landmarks only if empirical noise exceeds this theoretical tolerance.  While resolving sets do not explicitly enforce proximity between landmarks and all nodes, they provide a robust baseline: each node remains distinguishable through its distance vector even when some measurements are uncertain or incomplete.  Reducing the number of landmarks lowers overall message volume, eases bandwidth demand, conserves battery power, and shortens maintenance checklists, because only a handful of critical landmarks need regular service.

Therefore, the bases developed in Section~\ref{Sec:Results} are more than combinatorial curiosities: they offer exact prescriptions for placing the fewest well-instrumented nodes needed to keep large, grid-structured (wired or wireless) sensor arrays fully and unambiguously observable.

\section{Discussion}

The metric dimension of a graph is a critical parameter with various applications in network navigation, chemistry, and robotics, among other fields,   some of which are briefly  discussed in the Introduction. In this paper, we have determined the exact metric dimension of the Cartesian product of star graphs, specifically $ K_{1,m} \square K_{1,n}$, for all values of $m$ and $n$.

Our primary contributions are encapsulated in Theorems \ref{2<m<n/2} and  \ref{n/2<m}, which establish the metric dimension for the cases of $2 \leq m < \frac{n}{2}$ and $\frac{n}{2} \leq m \leq n$, respectively. Specifically, we showed that when $2 \leq m < \frac{n}{2}$, the metric dimension of $K_{1,m} \square K_{1,n}$ is $n-1$, while for $\frac{n}{2} \leq m \leq n$, it is $n + \lfloor \frac{2m-n}{3} \rfloor$. Moreover, to ensure comprehensive coverage of all configurations, we considered the special case of $m=1$ in Theorem \ref{thm:SmallCases}. This result completes the analysis for all values of $m$ and $n$.

Our metric dimension results yield theoretical lower bounds and constructive blueprints that can inform, initialise, or benchmark graph based localisation pipelines \cite{zhang2023factorgraph} and learning assisted sensor placement schemes \cite{huang2025graphpool}.

To visualize these findings, we plotted the linear extension of the metric dimension of $K_{1,m} \square K_{1,n}$ against $m$ for a fixed $n$ (as shown in Figure~\ref{fig:2D}). This plot illustrates how the metric dimension changes with varying values of $m$. Specifically, we fixed  $n=14$, computed the metric dimension for $2 \leq m <\frac{n}{2}$, and then for $\frac{n}{2} \leq m \leq n$. 

\begin{figure}[h]
	\centering
	\includegraphics[width=.5\linewidth]{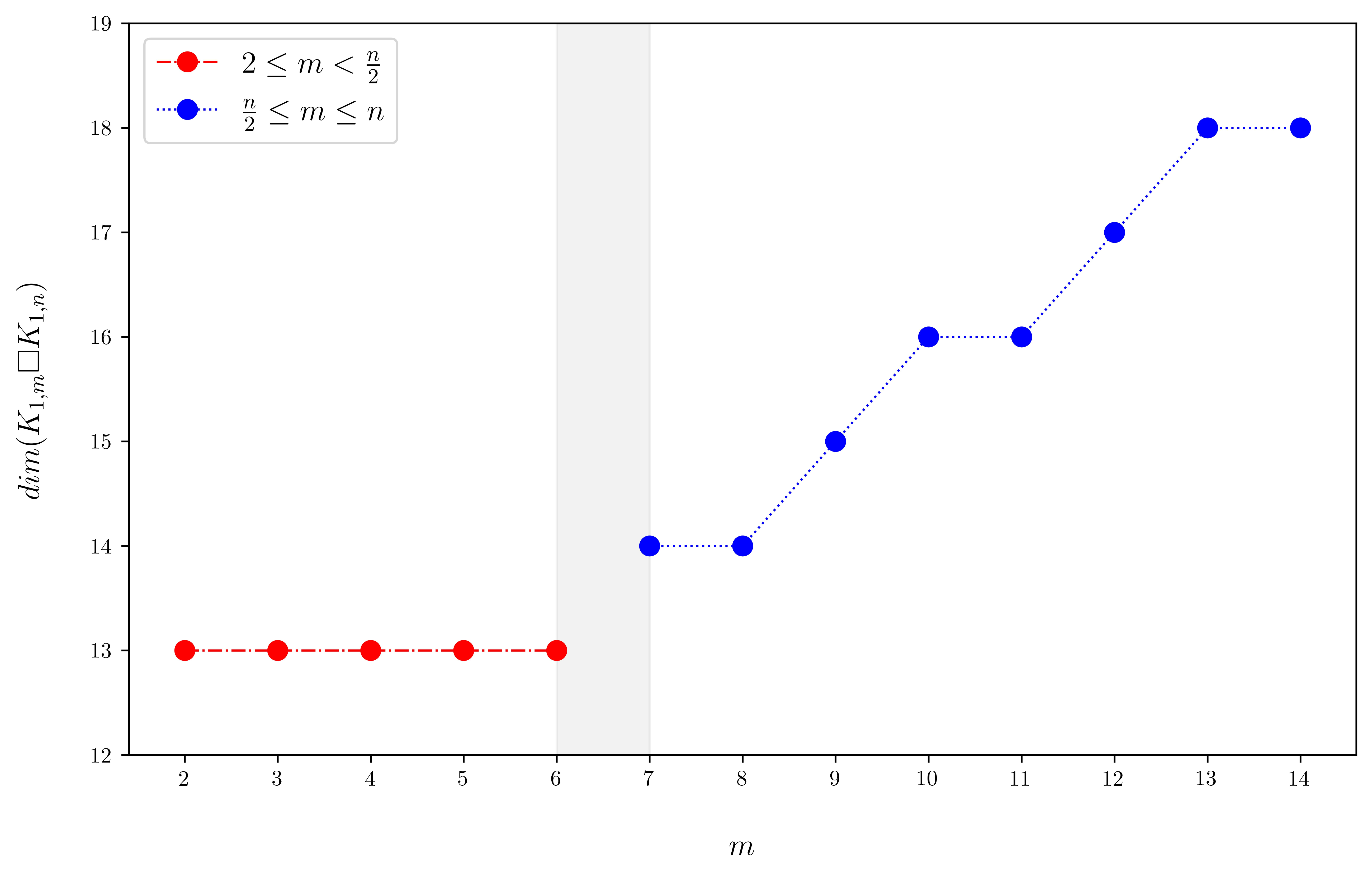} 
	\caption{Linear extension plot of the metric dimension $\dim(K_{1,m} \square K_{1,n})$ against $m$ for a fixed value of $n=14$. The plot shows the change in metric dimension as $m$ varies within the ranges $2 \leq m <\frac{n}{2}$ and $\frac{n}{2} \leq m \leq n$.}
	\label{fig:2D}
\end{figure}

Furthermore, in Figure~\ref{fig:3D}, we presented a heatmap that visualizes the metric dimension $\dim(K_{1,m} \square K_{1,n})$ as a function of both $m$ and $n$.  This visualization provides a comprehensive view of how the metric dimension varies with different values of $n$ and $m$. To aid in understanding, we depicted $n$ on the $x$-axis and $m$ on the $y$-axis, with colors indicating the metric dimension for each pair of $n$ and $m$, satisfying $2\leq m\leq n\leq n_0$. Here, $n_0$ is an arbitrary fixed integer upper bound for parameters $m$ and $n$ considered to plot the heatmap. Note that, for simplicity, we left the region where $m > n$ white, as we only consider cases where $m \leq n$.

\begin{figure}[h]
	\centering
	\includegraphics[width=.5\linewidth]{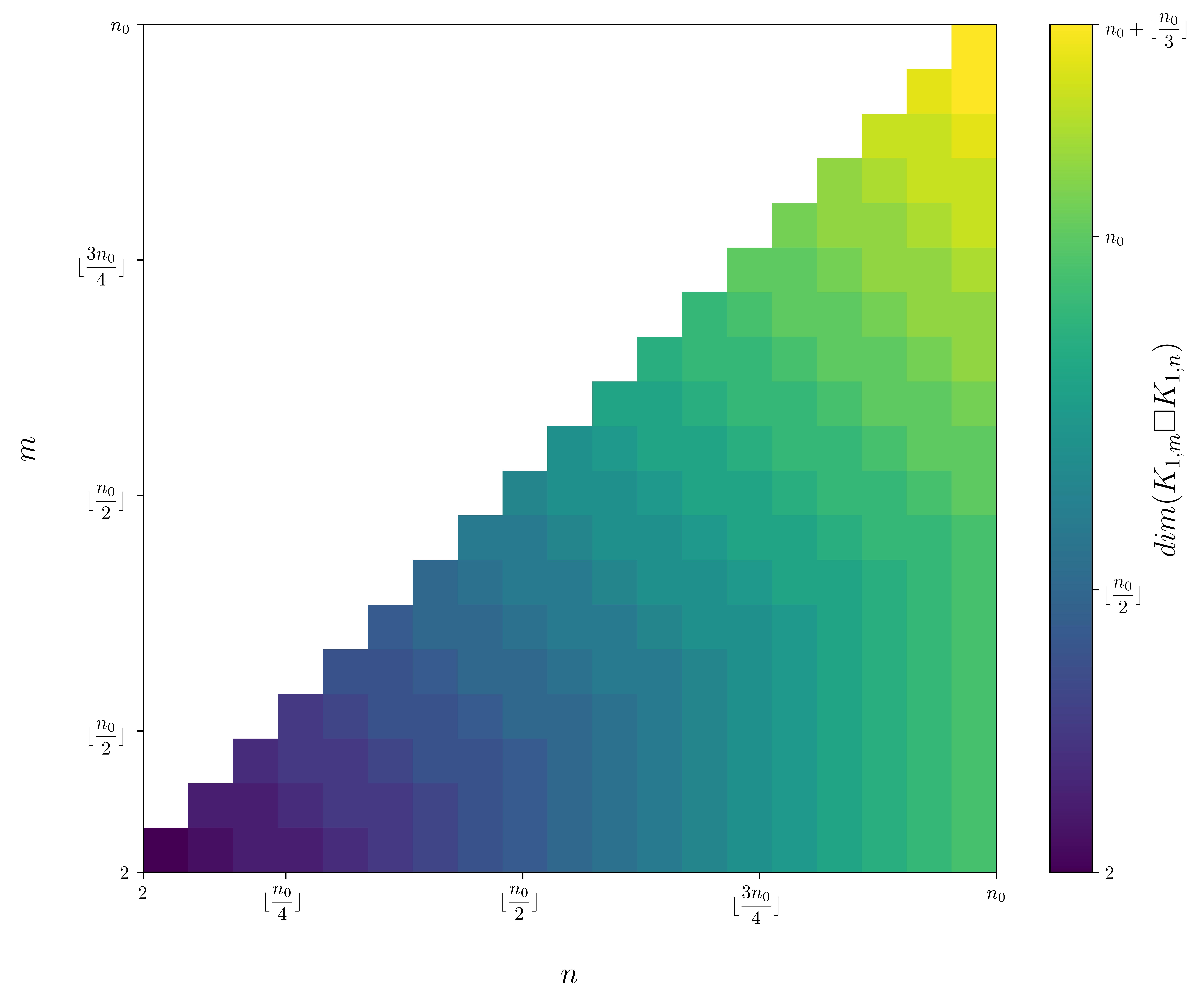}
	\caption{ Heatmap of $\dim(K_{1,m} \square K_{1,n})$ as a function of $m$ and $n$. The $x$-axis represents $n$, the $y$-axis represents $m$, and the colors indicate the metric dimension for each pair of $m$ and $n$.}
	\label{fig:3D}
\end{figure}

These visualizations simplify the comprehension of our theoretical findings and provide an intuitive understanding of the relationship between the metric dimension and the size of input star graphs. Additionally, we hope that our proof techniques and approaches will aid researchers in studying the metric dimensions of other graph products, thereby opening new avenues for future research in graph theory and its applications.

Beyond Cartesian products of star graphs, the methods developed in this paper could potentially be adapted to study the metric dimension of Cartesian products involving other families of graphs, and even more general graph operations such as the strong product ($K_{1,m}\boxtimes K_{1,n}$) and the direct product ($K_{1,m}\times K_{1,n}$). Moreover, our closed-form solutions and constructive $O(m+n)$ algorithm for $K_{1,m}\square K_{1,n}$ yield new benchmark instances for testing and calibrating distance-based invariants (e.g.\ metric dimension, locating-domination~\cite{locating-domination}, partition dimension~\cite{partitiondimension}). From an application standpoint, the same ``hub and spoke grid'' abstraction may capture layered network architectures, such as mitochondrial signaling motifs, city-to-airport transport systems, or data-center fabrics, where metric dimension quantifies the minimum monitor budget (landmarks) needed for unambiguous localization by hop-count alone. In chemical graph theory, Cartesian products of stars model repeating branch-and-chain motifs, and metric dimension corresponds to the resolving code length, i.e.\ the smallest set of spectroscopic tags needed to distinguish all atomic positions.

Finally, by interpreting a resolving set as a cover of the \emph{pair-distinction hypergraph}, whose vertices are the unordered pairs $\{u,v\}$ and where each hyperedge $S_w=\{\{u,v\}:d(u,w)\neq d(v,w)\}$ records which pairs are distinguished by a landmark $w$, one observes deep parallels with clique covering invariants~\cite{pullman}, including clique covers~\cite{triangle}, sigma clique covers~\cite{scc, scp}, and local clique covers~\cite{lcc}.  Techniques developed in that literature may thus migrate to metric dimension problems, while our explicit constructions suggest extremal patterns for these covering variants. 
Also, as our proofs rely on decomposing $K_{1,m}\square K_{1,n}$ into ``rows'' and ``columns'' centered at the high-degree vertices, we expect these techniques to extend to products $K_{1,m}\square G$ when $G$ is a tree or cactus, and even to strong  or direct  products that retain sufficient separability.

\section*{Declarations}

\section*{Competing interest}
The authors declare that they have no known competing financial interests or personal relationships that could have appeared to influence the work reported in this paper.

\section*{Author contributions}
All authors contributed equally to the study conception, design, and writing of the manuscript.

 \bibliographystyle{unsrt2authabbrvpp3}
\bibliography{cas-refs}

@article{adar2018metric, 
	title={The Metric Dimension of Two-Dimensional Extended Meshes}, volume={23}, DOI={10.14232/actacyb.23.3.2018.2}, abstractNote={We consider two-dimensional grids with diagonals, also called extended meshes or meshes. Such a graph consists of vertices of the form (i, j) for 1 ≤ i ≤ m and 1 ≤ j ≤ n, for given m, n ≥ 2. Two vertices are defined to be adjacent if the `∞ distance between their vectors is equal to 1. A landmark set is a subset of vertices L ⊆ V , such that for any distinct pair of vertices u, v ∈ V , there exists a vertex of L with different distances to u and v. We analyze the metric dimension and show how to obtain a landmark set of minimum cardinality.}, number={3}, journal={Acta Cybernetica}, author={Adar, Ron and Epstein, Leah}, year={2018}, month={Jan.}, pages={761-772} }

@ARTICLE{net2,
	author={Beerliova, Zuzana and Eberhard, Felix and Erlebach, Thomas and Hall, Alexander and Hoffmann, Michael and Mihal'ak, Mat and Ram, L. Shankar},
	journal={IEEE J Sel Areas Commun}, 
	title={Network Discovery and Verification}, 
	year={2006},
	volume={24},
	number={12},
	pages={2168-2181},
	keywords={Peer to peer computing;Internet;IP networks;Computer science;Complex networks;Routing;Robustness;Network topology;Energy consumption;Approximation algorithms;complex networks;Internet discovery;online algorithms;random graphs},
	doi={10.1109/JSAC.2006.884015}}

@article{2-trees,
	author = {Behtoei, Ali and Davoodi, Akbar and Jannesari, Mohsen and Omoomi, Behnaz},
	title = {A characterization of some graphs with metric dimension two},
	journal = {Discrete Math Algorithms Appl},
	volume = {09},
	number = {02},
	pages = {1750027},
	year = {2017},
	doi = {10.1142/S1793830917500276},
	abstract = { A set W ⊆ V (G) is called a resolving set, if for each pair of distinct vertices u,v ∈ V (G) there exists t ∈ W such that d(u,t)≠d(v,t), where d(x,y) is the distance between vertices x and y. The cardinality of a minimum resolving set for G is called the metric dimension of G and is denoted by dimM(G). A k-tree is a chordal graph all of whose maximal cliques are the same size k + 1 and all of whose minimal clique separators are also all the same size k. A k-path is a k-tree with maximum degree 2k, where for each integer j, k ≤ j < 2k, there exists a unique pair of vertices, u and v, such that deg(u) =deg(v) = j. In this paper, we prove that if G is a k-path, then dimM(G) = k. Moreover, we provide a characterization of all 2-trees with metric dimension two. }
}

@article{bailey2013resolving,
	title = {Resolving sets for Johnson and Kneser graphs},
	journal = {Eur. J. Comb.},
	volume = {34},
	number = {4},
	pages = {736-751},
	year = {2013},
	issn = {0195-6698},
	doi = {10.1016/j.ejc.2012.10.008},
	author = {Robert F. Bailey and José Cáceres and Delia Garijo and Antonio González and Alberto Márquez and Karen Meagher and María Luz Puertas},
	abstract = {A set of vertices S in a graph G is a resolving set for G if, for any two vertices u,v, there exists x∈S such that the distances d(u,x)≠d(v,x). In this paper, we consider the Johnson graphs J(n,k) and Kneser graphs K(n,k), and obtain various constructions of resolving sets for these graphs. As well as general constructions, we show that various interesting combinatorial objects can be used to obtain resolving sets in these graphs, including (for Johnson graphs) projective planes and symmetric designs, as well as (for Kneser graphs) partial geometries, Hadamard matrices, Steiner systems and toroidal grids.}
}

@article{code,
	title={Error-Correcting Codes from-Resolving Sets},
	author={Bailey, Robert F and Yero, Ismael G},
	journal={Discuss. Math. Graph Theory},
	volume={39},
	number={2},
	pages={341--355},
	year={2019},
	doi={10.7151/dmgt.2087}
}

@article{Ollerman,
	title = {Resolvability in graphs and the metric dimension of a graph},
	journal = {Discret. Appl. Math.},
	volume = {105},
	number = {1},
	pages = {99-113},
	year = {2000},
	issn = {0166-218X},
	doi = {10.1016/S0166-218X(00)00198-0},
	author = {Gary Chartrand and Linda Eroh and Mark A. Johnson and Ortrud R. Oellermann},
	keywords = {Distance, Metric dimension, Basis},
	abstract = {For an ordered subset W={w1,w2,…,wk} of vertices in a connected graph G and a vertex v of G, the metric representation of v with respect to W is the k-vector r(v|W)=(d(v,w1), d(v,w2),…,d(v,wk)). The set W is a resolving set for G if r(u|W)=r(v|W) implies that u=v for all pairs u,v of vertices of G. The metric dimension dim(G) of G is the minimum cardinality of a resolving set for G. Bounds on dim(G) are presented in terms of the order and the diameter of G. All connected graphs of order n having dimension 1,n−2, or n−1 are determined. A new proof for the dimension of a tree is also presented. From this result sharp bounds on the metric dimension of unicyclic graphs are established. It is shown that dim(H)⩽dim(H×K2)⩽dim(H)+1 for every connected graph H. Moreover, it is shown that for every positive real number ε, there exists a connected graph G and a connected induced subgraph H of G such that dim(G)/dim(H)<ε.}
}

@article{ChemChart,
	title = {Resolvability and the upper dimension of graphs},
	journal = {Comput. Math. Appl.},
	volume = {39},
	number = {12},
	pages = {19-28},
	year = {2000},
	issn = {0898-1221},
	doi = {10.1016/S0898-1221(00)00126-7},
	author = {G. Chartrand and C. Poisson and P. Zhang},
	keywords = {Resolving set, Resolving number, Dimension, Upper dimension},
	abstract = {For an ordered set W = {w1, w2,…, wk} of vertices and a vertex v in a connected graph G, the (metric) representation of v with respect to W is the k-vector r(v | W) = (d(v, w1), d(v, w2),…, d(v, wk)), where d(x, y) represents the distance between the vertices x and y. The set W is a resolving set for G if distinct vertices of G have distinct representations. A new sharp lower bound for the dimension of a graph G in terms of its maximum degree is presented. A resolving set of minimum cardinality is a basis for G and the number of vertices in a basis is its (metric) dimension dim(G). A resolving set S of G is a minimal resolving set if no proper subset of S is a resolving set. The maximum cardinality of a minimal resolving set is the upper dimension dim+(G). The resolving number res(G) of a connected graph G is the minimum k such that every k-set W of vertices of G is also a resolving set of G. Then 1 ≤ dim(G) ≤ dim+(G) ≤ res(G) ≤ n − 1 for every nontrivial connected graph G of order n. It is shown that dim+(G) = res(G) = n − 1 if and only if G = Kn, while dim+(G) = res(G) = 2 if and only if G is a path of order at least 4 or an odd cycle. The resolving numbers and upper dimensions of some well-known graphs are determined. It is shown that for every pair a, b of integers with 2 ≤ a ≤ b, there exists a connected graph G with dim(G) = dim+(G) = a and res(G) = b. Also, for every positive integer N, there exists a connected graph G with res(G) − dim+(G) ≥ N and dim+(G) − dim(G) ≥ N.}
}

@article{CartesianProduct,
	author = { C\'{a}ceres, Jos\'{e} and  Hernando, Carmen and  Mora, Merc\`{e} and  Pelayo, Ignacio M. and  Puertas, Mar\'{\i}a L. and  Seara, Carlos and  Wood, David R.},
	title = {On the Metric Dimension of Cartesian Products of Graphs},
	journal = {SIAM J Discret Math},
	volume = {21},
	number = {2},
	pages = {423-441},
	year = {2007},
	doi = {10.1137/050641867},
	abstract = { A set of vertices S resolves a graph G if every vertex is uniquely determined by its vector of distances to the vertices in S. The metric dimension of G is the minimum cardinality of a resolving set of G. This paper studies the metric dimension of cartesian products \$G\,\square\,H\$. We prove that the metric dimension of \$G\,\square\,G\$ is tied in a strong sense to the minimum order of a so‐called doubly resolving set in G. Using bounds on the order of doubly resolving sets, we establish bounds on \$G\,\square\,H\$ for many examples of G and H. One of our main results is a family of graphs G with bounded metric dimension for which the metric dimension of \$G\,\square\,G\$ is unbounded. }
}

@InProceedings{Diaz,
	author="D{\'i}az, Josep
	and Pottonen, Olli
	and Serna, Maria
	and van Leeuwen, Erik Jan",
	editor="Epstein, Leah
	and Ferragina, Paolo",
	title="On the Complexity of Metric Dimension",
	booktitle="Algorithms -- ESA 2012",
	year="2012",
	publisher="Springer Berlin Heidelberg",
	address="Berlin, Heidelberg",
	pages="419--430",
	abstract=""
}

@article{Levin,
	abstract = {},
	author = {Epstein, Leah and Levin, Asaf and Woeginger, Gerhard J.},
	date = {2015/08/01},
	date-added = {2024-09-20 12:17:38 +0200},
	date-modified = {2024-09-20 12:17:38 +0200},
	doi = {10.1007/s00453-014-9896-2},
	id = {Epstein2015},
	isbn = {1432-0541},
	journal = {Algorithmica},
	number = {4},
	pages = {1130--1171},
	title = {The (Weighted) Metric Dimension of Graphs: Hard and Easy Cases},
	volume = {72},
	year = {2015},
	bdsk-url-1 = {https://doi.org/10.1007/s00453-014-9896-2}
}

@article{fijavvz2004rigidity,
	title = {Rigidity and separation indices of Paley graphs},
	journal = {Discrete Math.},
	volume = {289},
	number = {1},
	pages = {157-161},
	year = {2004},
	issn = {0012-365X},
	doi = {10.1016/j.disc.2004.09.004},
	author = {Gašper Fijavž and Bojan Mohar},
	keywords = {Rigidity index, Separation index, Paley graph},
	abstract = {It is shown that the ratio between separation and rigidity indices of graphs may be arbitrarily large. Paley graphs are such examples.}
}

@article{fehr2006metric,
	title = {The metric dimension of Cayley digraphs},
	journal = {Discrete Math.},
	volume = {306},
	number = {1},
	pages = {31-41},
	year = {2006},
	issn = {0012-365X},
	doi = {10.1016/j.disc.2005.09.015},
	author = {Melodie Fehr and Shonda Gosselin and Ortrud R. Oellermann},
	keywords = {Metric dimension, Cayley digraphs, Metric independence, Integer programming, Linear programming},
	abstract = {A vertex x in a digraph D is said to resolve a pair u, v of vertices of D if the distance from u to x does not equal the distance from v to x. A set S of vertices of D is a resolving set for D if every pair of vertices of D is resolved by some vertex of S. The smallest cardinality of a resolving set for D, denoted by dim(D), is called the metric dimension for D. Sharp upper and lower bounds for the metric dimension of the Cayley digraphs Cay(Δ:Γ), where Γ is the group Zn1⊕Zn2⊕⋯⊕Znm and Δ is the canonical set of generators, are established. The exact value for the metric dimension of Cay({(0,1),(1,0)}:Zn⊕Zm) is found. Moreover, the metric dimension of the Cayley digraph of the dihedral group Dn of order 2n with a minimum set of generators is established. The metric dimension of a (di)graph is formulated as an integer programme. The corresponding linear programming formulation naturally gives rise to a fractional version of the metric dimension of a (di)graph. The fractional dual implies an integer dual for the metric dimension of a (di)graph which is referred to as the metric independence of the (di)graph. The metric independence of a (di)graph is the maximum number of pairs of vertices such that no two pairs are resolved by the same vertex. The metric independence of the n-cube and the Cayley digraph Cay(Δ:Dn), where Δ is a minimum set of generators for Dn, are established.}
}

@article{Harary,
	title={On the metric dimension of a graph},
	author={Harary, Frank and Melter, Robert A},
	journal={Ars combin},
	volume={2},
	number={191-195},
	pages={1},
	year={1976}
}

@article{caceres2005metric,
	title = {On the metric dimension of some families of graphs},
	journal = {Electron. Notes Discrete Math.},
	volume = {22},
	pages = {129-133},
	year = {2005},
	issn = {1571-0653},
	doi = {10.1016/j.endm.2005.06.023},
	author = {Carmen Hernando and Mercè Mora and Ignacio M. Pelayo and Carlos Seara and José Cáceres and Mari L. Puertas},
	keywords = {Cartesian product, join, metric basis, metric dimension, resolving set},
	abstract = {The concept of (minimum) resolving set has proved to be useful and/or related to a variety of fields such as Chemistry [G. Chartrand, D. Erwin, G. L. Johns and P. Zhang, Boundary vertices in graphs, Discrete Math. 263 (2003) 25-34; C. Poisson and P. Zhang, The metric dimension of unicyclic graphs, J. Comb. Math Comb. Comput. 40 (2002) 17–32], Robotic Navigation [S. Khuller, B. Raghavachari and A. Rosenfeld, Landmarks in graphs, Disc. Appl. Math. 70 (1996) 217–229; B. Shanmukha, B. Sooryanarayana and K. S. Harinath, Metric dimension of wheels, Far East J. Appl. Math. 8 (3) (2002) 217–229] and Combinatorial Search and Optimization [A. Sebö and E. Tannier, On metric generators of graphs, Math. Oper. Res. 29 (2) (2004) 383–393]. This work is devoted to evaluating the so-called metric dimension of a finite connected graph, i.e., the minimum cardinality of a resolving set, for a number of graph families, as long as to study its behavior with respect to the join and the cartesian product of graphs.}
}

@article{doubly,
	title = {Graphs with doubly resolving number 2},
	journal = {Discret. Appl. Math.},
	volume = {339},
	pages = {178-183},
	year = {2023},
	issn = {0166-218X},
	doi = {10.1016/j.dam.2023.06.017},
	author = {Mohsen Jannesari},
	keywords = {Doubly resolving set, Doubly resolving number, Resolving set, Block-cutpoint graph},
	abstract = {Two vertices u,v in a connected graph G are doubly resolved by x,y∈G if d(v,x)−d(u,x)≠d(v,y)−d(u,y).A set W of vertices of the graph G is a doubly resolving set for G if every two distinct vertices of G are doubly resolved by some two vertices of W. Doubly resolving number of a graph G, denoted by ψ(G), is the minimum cardinality of a doubly resolving set for the graph G. In this paper all graphs G with ψ(G)=2 are characterized by using 2-connected subgraphs of G.}
}

@Article{imran2018metric,
	AUTHOR = {Imran, Shahid and Siddiqui, Muhammad Kamran and Imran, Muhammad and Hussain, Muhammad},
	TITLE = {On Metric Dimensions of Symmetric Graphs Obtained by Rooted Product},
	JOURNAL = {Mathematics},
	VOLUME = {6},
	YEAR = {2018},
	NUMBER = {10},
	ARTICLE-NUMBER = {191},
	ISSN = {2227-7390},
	ABSTRACT = {Let G = (V, E) be a connected graph and d(x, y) be the distance between the vertices x and y in G. A set of vertices W resolves a graph G if every vertex is uniquely determined by its vector of distances to the vertices in W. A metric dimension of G is the minimum cardinality of a resolving set of G and is denoted by dim(G). In this paper, Cycle, Path, Harary graphs and their rooted product as well as their connectivity are studied and their metric dimension is calculated. It is proven that metric dimension of some graphs is unbounded while the other graphs are constant, having three or four dimensions in certain cases.},
	DOI = {10.3390/math6100191}
}

@article{iswadi2008metric,
	title={The metric dimension of graph with pendant edges},
	author={Iswadi, Hazrul and Baskoro, Edy Tri and Simanjuntak, Rinovia and Salman, ANM},
	journal={JCMCC},
	volume={65},
	pages={139--145},
	year={2008},
	publisher={Charles Babbage Research Center}
}

@article{n-3,
	title={Characterization of n-vertex graphs with metric dimension n-3},
	author={Jannesari, Mohsen and Omoomi, Behnaz},
	journal={Math. Bohem.},
	volume={139},
	pages={1--23},
	year={2014},
	doi={10.21136/MB.2014.143632}
}

@article{random,
	title = {On randomly k-dimensional graphs},
	journal = {Appl. Math. Lett.},
	volume = {24},
	number = {10},
	pages = {1625-1629},
	year = {2011},
	issn = {0893-9659},
	doi = {10.1016/j.aml.2011.03.024},
	author = {Mohsen Jannesari and Behnaz Omoomi},
	keywords = {Resolving set, Metric dimension, Basis, Resolving number, Basis number},
	abstract = {}
}

@article{lexico,
	title = {The metric dimension of the lexicographic product of graphs},
	journal = {Discrete Math.},
	volume = {312},
	number = {22},
	pages = {3349-3356},
	year = {2012},
	issn = {0012-365X},
	doi = {10.1016/j.disc.2012.07.025},
	author = {Mohsen Jannesari and Behnaz Omoomi},
	keywords = {Lexicographic product, Resolving set, Metric dimension, Metric basis, Adjacency dimension},
	abstract = {For a set W of vertices and a vertex v in a connected graph G, the k-vector rW(v)=(d(v,w1),…,d(v,wk)) is the metric representation of v with respect to W, where W={w1,…,wk} and d(x,y) is the distance between the vertices x and y. The set W is a resolving set for G if distinct vertices of G have distinct metric representations with respect to W. The minimum cardinality of a resolving set for G is its metric dimension. In this paper, we study the metric dimension of the lexicographic product of graphs G and H, denoted by G[H]. First, we introduce a new parameter, the adjacency dimension, of a graph. Then we obtain the metric dimension of G[H] in terms of the order of G and the adjacency dimension of H.}
}

@article{javaid2008families,
	title={Families of regular graphs with constant metric dimension},
	author={Javaid, Imran and Rahim, M Tariq and Ali, Kashif},
	journal={Util. Math.},
	volume={75},
	number={1},
	pages={21--33},
	year={2008},
	publisher={Winnipeg: University of Manitoba, Department of Computer Science, 1972-}
}

@article{chem,
	author = {Mark Johnson},
	title = {Structure-activity maps for visualizing the graph variables arising in drug design},
	journal = {J. Biopharm. Stat.},
	volume = {3},
	number = {2},
	pages = {203--236},
	year = {1993},
	publisher = {Taylor \& Francis},
	doi = {10.1080/10543409308835060}
}

@article{landmarks,
	title = {Landmarks in graphs},
	journal = {Discret. Appl. Math.},
	volume = {70},
	number = {3},
	pages = {217-229},
	year = {1996},
	issn = {0166-218X},
	doi = {10.1016/0166-218X(95)00106-2},
	author = {Samir Khuller and Balaji Raghavachari and Azriel Rosenfeld},
	abstract = {Navigation can be studied in a graph-structured framework in which the navigating agent (which we shall assume to be a point robot) moves from node to node of a “graph space”. The robot can locate itself by the presence of distinctively labeled “landmark” nodes in the graph space. For a robot navigating in Euclidean space, visual detection of a distinctive landmark provides information about the direction to the landmark, and allows the robot to determine its position by triangulation. On a graph, however, there is neither the concept of direction nor that of visibility. Instead, we shall assume that a robot navigating on a graph can sense the distances to a set of landmarks. Evidently, if the robot knows its distances to a sufficiently large set of landmarks, its position on the graph is uniquely determined. This suggests the following problem: given a graph, what are the fewest number of landmarks needed, and where should they be located, so that the distances to the landmarks uniquely determine the robot's position on the graph? This is actually a classical problem about metric spaces. A minimum set of landmarks which uniquely determine the robot's position is called a “metric basis”, and the minimum number of landmarks is called the “metric dimension” of the graph. In this paper we present some results about this problem. Our main new results are that the metric dimension of a graph with n nodes can be approximated in polynomial time within a factor of O(log n), and some properties of graphs with metric dimension two.}
}

@article{digital,
	title = {Metric bases in digital geometry},
	journal = {Comput.Gr.Image Process.},
	volume = {25},
	number = {1},
	pages = {113-121},
	year = {1984},
	issn = {0734-189X},
	doi = {10.1016/0734-189X(84)90051-3},
	author = {Robert A Melter and Ioan Tomescu},
	abstract = {Let S be a metric space under the distance function d. A metric basis is a subset B ⊆ S such that d(b, x) = d(b, y) for all b ϵ B implies x = y. It is shown that for Euclidean distance, the minimal metric bases for the digital plane are just the sets of three noncollinear points; but for city block or chessboard distance, the digital plane has no finite metric basis. The sizes of minimal metric bases for upright digital rectangles are also derived, and it is shown that there exist rectangles having minimal metric bases of any size ≥ 3.}
}

@article{FatTree,
	abstract = {The ability to uniquely identify all nodes in a network based on network distances has proven to be highly beneficial despite the computational challenges involved in discovering minimal resolving sets within an interconnection network. A subset R of vertices of a graph G is referred to as a resolving set of the graph if each node can be uniquely identified by its distance code with respect to R, with its minimal cardinality defining the metric dimension of G. Similarly, a resolving set {\$}{\$}F {$\backslash$}subseteq V{\$}{\$}is designated as a fault-tolerant resolving set if {\$}{\$}F {\{}{$\backslash$}setminus {\}} {$\backslash$}{\{}s{$\backslash$}{\}}{\$}{\$}serves as a resolving set for each {\$}{\$}s {$\backslash$}in F{\$}{\$}. The minimum cardinality of F represents the fault-tolerant metric dimension of G. Although determining the precise metric dimension of a given graph remains challenging, various effective techniques and meaningful constraints have been developed for different graph families. However, no notable technique has been developed to find fault-tolerant metric dimension of a given graph. Recently, Prabhu et al. have shown that each twin vertex of G belongs to every fault-tolerant resolving set of G. Consequently, the fault-tolerant metric dimension is equal to the order of the graph G if and only if each vertex of G is a twin vertex, a characterization proved in Appl Math Comput 420:126897, 2022, corrects a wrong characterization in the literature. It is also interesting to note from the above literature correction that the twin vertices are necessary to form the fault-tolerant resolving set, but determining whether they are sufficient is challenging. Evidence of this context is also discussed in this paper through the amalgamation of perfect binary trees. This article focuses on determining the exact value of the fault-tolerant metric dimension of generalized fat trees. For the amalgamation of perfect binary trees, both the metric dimension and fault-tolerant metric dimension were precisely found.},
	author = {Prabhu, S. and Manimozhi, V. and Davoodi, Akbar and Guirao, Juan Luis Garc{\'\i}a},
	date = {2024/07/01},
	date-added = {2024-09-20 12:19:20 +0200},
	date-modified = {2024-09-20 12:19:20 +0200},
	doi = {10.1007/s11227-024-06053-5},
	id = {Prabhu2024},
	isbn = {1573-0484},
	journal = {J. Supercomput.},
	number = {11},
	pages = {15783--15798},
	title = {Fault-tolerant basis of generalized fat trees and perfect binary tree derived architectures},
	volume = {80},
	year = {2024},
	bdsk-url-1 = {10.1007/s11227-024-06053-5}}

@article{coin,
	title={On metric generators of graphs},
	author={Seb{\H{o}}, Andr{\'a}s and Tannier, Eric},
	journal={Math. Oper. Res.},
	volume={29},
	number={2},
	pages={383--393},
	year={2004},
	doi={10.1287/moor.1030.0070},
	publisher={INFORMS}
}

@article{ahmad2013metric,
	author = {Shabbir Ahmad, Muhammad Anwar Chaudhry, Imran Javaid and Muhammad Salman},
	title = {On the metric dimension of generalized Petersen graphs},
	journal = {Quaest. Math.},
	volume = {36},
	number = {3},
	pages = {421--435},
	year = {2013},
	publisher = {Taylor \& Francis},
	doi = {10.2989/16073606.2013.779957},
	
}

@article{saputro2013metric,
	title = {The metric dimension of the lexicographic product of graphs},
	journal = {Discrete Math.},
	volume = {313},
	number = {9},
	pages = {1045-1051},
	year = {2013},
	issn = {0012-365X},
	doi = {10.1016/j.disc.2013.01.021},
	author = {S.W. Saputro and R. Simanjuntak and S. Uttunggadewa and H. Assiyatun and E.T. Baskoro and A.N.M. Salman and M. Bača},
	keywords = {Basis, Lexicographic product, Metric dimension, Resolving set},
	abstract = {A set of vertices W resolves a graph G if every vertex is uniquely determined by its coordinate of distances to the vertices in W. The minimum cardinality of a resolving set of G is called the metric dimension of G. In this paper, we consider a graph which is obtained by the lexicographic product between two graphs. The lexicographic product of graphs G and H, which is denoted by G∘H, is the graph with vertex set V(G)×V(H)={(a,v)|a∈V(G),v∈V(H)}, where (a,v) is adjacent to (b,w) whenever ab∈E(G), or a=b and vw∈E(H). We give the general bounds of the metric dimension of a lexicographic product of any connected graph G and an arbitrary graph H. We also show that the bounds are sharp.}
}

@article{rodriguez2015metric,
	ISSN = {15842851, 18434401},
	abstract = {For an ordered subset S = {s₁, s₂, ...sk} of vertices in a connected graph G, the metric representation of a vertex u with respect to the set S is the k-vector r(u|S) = (dG(v, s₁), dG(v, s₂), ..., dG(v, sk)), where dG(x, y) represents the distance between the vertices x and y. The set S is a metric generator for G if every two different vertices of G have distinct metric representations with respect to S. A minimum metric generator is called a metric basis for G and its cardinality, dim(G), the metric dimension of G. It is well known that the problem of finding the metric dimension of a graph is NP-Hard. In this paper we obtain closed formulae and tight bounds for the metric dimension of strong product graphs.},
	author = {Juan A. Rodríguez-Velázquez and Dorota Kuziak and Ismael G. Yero and José M. Sigarreta},
	journal = {Carpathian J. Math.},
	number = {2},
	pages = {261--268},
	publisher = {Sinus Association},
	title = {The metric dimension of strong product graphs},
	urldate = {2024-09-20},
	volume = {31},
	year = {2015}
}

@article{kuziak2017resolvability,
	abstract = {Given a connected graph G, a vertex {\$}{\$}{\{}w {$\backslash$}in V(G){\}}{\$}{\$}distinguishes two different vertices u, v of G if the distances between w and u, and between w and v are different. Moreover, w strongly resolves the pair u, v if there exists some shortest u−w path containing v or some shortest v−w path containing u. A set W of vertices is a (strong) metric generator for G if every pair of vertices of G is (strongly resolved) distinguished by some vertex of W. The smallest cardinality of a (strong) metric generator for G is called the (strong) metric dimension of G. In this article we study the (strong) metric dimension of some families of direct product graphs.},
	author = {Kuziak, Dorota and Peterin, Iztok and Yero, Ismael G.},
	date = {2017/02/01},
	date-added = {2024-09-20 12:15:48 +0200},
	date-modified = {2024-09-20 12:15:48 +0200},
	doi = {10.1007/s00025-016-0563-6},
	id = {Kuziak2017},
	isbn = {1420-9012},
	journal = {Results Math.},
	number = {1},
	pages = {509--526},
	title = {Resolvability and Strong Resolvability in the Direct Product of Graphs},
	volume = {71},
	year = {2017},
	bdsk-url-1 = {https://doi.org/10.1007/s00025-016-0563-6}}

@article{VillarceauGrids,
	title={Metric Dimension of Villarceau Grids},
	author={Prabhu, S and Jeba, D and Manuel, Paul and Davoodi, Akbar},
	journal={arXiv preprint arXiv:2410.08662},
	year={2024}
}

@article{shanmukha2002metric,
	title={Metric dimension of wheels},
	author={Shanmukha, B and Sooryanarayana, B and Harinath, KS},
	journal={Far East J. Appl. Math},
	volume={8},
	number={3},
	pages={217--229},
	year={2002}
}

@article{Slater1975,
	title={Leaves of trees},
	author={Slater, Peter J},
	journal={Congr. Numer},
	volume={14},
	number={549-559},
	pages={37},
	year={1975}
}

@article{tillquist2019low,
	abstract = {},
	date = {2019/07/01},
	author={Tillquist, Richard C and Lladser, Manuel E},
	date-added = {2024-09-20 12:17:28 +0200},
	date-modified = {2024-09-20 12:17:28 +0200},
	doi = {10.1007/s00285-019-01348-1},
	id = {Tillquist2019},
	isbn = {1432-1416},
	journal = {J. Math. Biol.},
	number = {1},
	pages = {1--29},
	title = {Low-dimensional representation of genomic sequences},
	volume = {79},
	year = {2019}}

@article{kuziak2010corrections,
	title={Corrections to the article" The metric dimension of graph with pendant edges"[Journal of Combinatorial Mathematics and Combinatorial Computing, 65 (2008) 139--145]},
	author={Kuziak, D and Rodriguez-Velazquez, JA and Yero, IG},
	journal={arXiv preprint arXiv:1010.1784},
	year={2010}
}

@article{kuziak2017computing,
	author = {Dorota Kuziak, Juan A. Rodr{\'\i}guez-Vel{\'a}zquez, Ismael G. Yero},
	title = {Computing the Metric Dimension of a Graph from Primary Subgraphs},
	journal = {Discuss. Math. Graph Theory},
	keywords = {metric dimension; metric basis; primary subgraphs; rooted product graphs; corona product graphs},
	language = {eng},
	number = {1},
	pages = {273-293},
	volume = {37},
	year = {2017},
}

@article{yero2011metric,
	title = {On the metric dimension of corona product graphs},
	journal = {Comput. Appl. Math.},
	volume = {61},
	number = {9},
	pages = {2793-2798},
	year = {2011},
	issn = {0898-1221},
	doi = {10.1016/j.camwa.2011.03.046},
	author = {I.G. Yero and D. Kuziak and J.A. Rodríguez-Velázquez},
	keywords = {Resolving sets, Metric dimension, Corona graph},
	abstract = {Given a set of vertices S={v1,v2,…,vk} of a connected graph G, the metric representation of a vertex v of G with respect to S is the vector r(v|S)=(d(v,v1),d(v,v2),…,d(v,vk)), where d(v,vi), i∈{1,…,k} denotes the distance between v and vi. S is a resolving set for G if for every pair of distinct vertices u,v of G, r(u|S)≠r(v|S). The metric dimension of G, dim(G), is the minimum cardinality of any resolving set for G. Let G and H be two graphs of order n1 and n2, respectively. The corona product G⊙H is defined as the graph obtained from G and H by taking one copy of G and n1 copies of H and joining by an edge each vertex from the ith-copy of H with the ith-vertex of G. For any integer k≥2, we define the graph G⊙kH recursively from G⊙H as G⊙kH=(G⊙k−1H)⊙H. We give several results on the metric dimension of G⊙kH. For instance, we show that given two connected graphs G and H of order n1≥2 and n2≥2, respectively, if the diameter of H is at most two, then dim(G⊙kH)=n1(n2+1)k−1dim(H). Moreover, if n2≥7 and the diameter of H is greater than five or H is a cycle graph, then dim(G⊙kH)=n1(n2+1)k−1dim(K1⊙H).}
}

@article{Erdos,
	title={On two problems of information theory},
	author={Erd{\H{o}}s, P{\'a}l and R{\'e}nyi, Alfr{\'e}d},
	journal={A Magyar Tudom{\'a}nyos Akad{\'e}mia Matematikai Kutat{\'o} Int{\'e}zet{\'e}nek K{\"o}zlem{\'e}nyei},
	volume={8},
	number={1-2},
	pages={229--243},
	year={1963},
	publisher={Akad{\'e}miai Kiad{\'o}}
}

@article{pav,
	title={Using graph theory to analyze biological networks},
	author={Pavlopoulos, Georgios A and Secrier, Maria and Moschopoulos, Charalampos N and Soldatos, Theodoros G and Kossida, Sophia and Aerts, Jan and Schneider, Reinhard and Bagos, Pantelis G},
	journal={BioData Min.},
	volume={4},
	pages={1--27},
	year={2011},
	publisher={Springer},
	doi={10.1186/1756-0381-4-10}
}

@article{dav,
	title = {Response prediction of antidepressants: Using graph theory tools for brain network connectivity analysis},
	journal = {Biomed. Signal Process. Control.},
	volume = {103},
	pages = {107362},
	year = {2025},
	issn = {1746-8094},
	doi = {10.1016/j.bspc.2024.107362},
	author = {Akbar Davoodi and Martin Holeňa and Martin Brunovský and Aditi Kathpalia and Jaroslav Hlinka and Martin Bareš and Milan Paluš},
	keywords = {Major depressive disorder, Electroencephalography, Antidepressant response, Machine learning, Graph theory, Partial ordering}
}

@article{wag,
	title = {Assessing the vulnerability of supply chains using graph theory},
	journal = {Int. J. Prod. Econ.},
	volume = {126},
	number = {1},
	pages = {121-129},
	year = {2010},
	issn = {0925-5273},
	doi = {10.1016/j.ijpe.2009.10.007},
	author = {Stephan M. Wagner and Nikrouz Neshat},
	keywords = {Supply chain management, Risk management, Vulnerability, Graph theory, Measurement, Large-scale survey}
}

@article{circulant,
	title = {On the metric dimension of circulant and Harary graphs},
	journal = {Appl Math Comput.},
	volume = {248},
	pages = {47-54},
	year = {2014},
	issn = {0096-3003},
	doi = {10.1016/j.amc.2014.09.045},
	author = {Cyriac Grigorious and Paul Manuel and Mirka Miller and Bharati Rajan and Sudeep Stephen},
	keywords = {Metric basis, Metric dimension, Circulant graphs, Harary graphs}
}

@article{fractal,
	title = {Redefining fractal cubic networks and determining their metric dimension and fault-tolerant metric dimension},
	journal = {Appl Math Comput.},
	volume = {452},
	pages = {128037},
	year = {2023},
	issn = {0096-3003},
	doi = {10.1016/j.amc.2023.128037},
	author = {M. Arulperumjothi and Sandi Klavžar and S. Prabhu},
	keywords = {Resolving set, Metric dimension, Fault-tolerant metric dimension, Hypercube, Fractal cubic network},
}

@article{starfan,
	title={Metric dimension of star fan graph},
	author={Prabhu, S and Jeba, D Sagaya Rani and Stephen, Sudeep},
	journal={Scientific Reports},
	volume={15},
	number={1},
	pages={102},
	year={2025},
	publisher={Nature Publishing Group UK London}
}

@article{carbon,
	title={Metric basis and metric dimension of 1-pentagonal carbon nanocone networks},
	author={Hussain, Zafar and Munir, Mobeen and Ahmad, Ashfaq and Chaudhary, Maqbool and Alam Khan, Junaid and Ahmed, Imtiaz},
	journal={Scientific Reports},
	volume={10},
	number={1},
	pages={19687},
	year={2020},
	publisher={Nature Publishing Group UK London}
}

@book{hammack2011handbook,
	title={Handbook of product graphs},
	author={Hammack, Richard H and Imrich, Wilfried and Klav{\v{z}}ar, Sandi},
	volume={2},
	year={2011},
	publisher = {CRC Press},
	address   = {Boca Raton},
	doi={10.1201/b10959}
}

@article{line,
	title = {On the metric dimension of line graphs},
	journal = {Discrete Appl Math},
	volume = {161},
	number = {6},
	pages = {802-805},
	year = {2013},
	issn = {0166-218X},
	doi = {10.1016/j.dam.2012.10.018},
	author = {Min Feng and Min Xu and Kaishun Wang},
	keywords = {Metric dimension, Resolving set, Line graph, De Bruijn digraph, Kautz digraph},
	abstract = {Let G be a (di)graph. A set W of vertices in G is a resolving set of G if every vertex u of G is uniquely determined by its vector of distances to all the vertices in W. The metric dimension μ(G) of G is the minimum cardinality of all the resolving sets of G. In this paper we study the metric dimension of the line graph L(G) of G. In particular, we show that μ(L(G))=|E(G)|−|V(G)| for a strongly connected digraph G which is not a directed cycle, where V(G) is the vertex set and E(G) is the edge set of G. As a corollary, the metric dimension of de Bruijn digraphs and Kautz digraphs is given. Moreover, we prove that ⌈log2Δ(G)⌉≤μ(L(G))≤|V(G)|−2 for a simple connected graph G with at least five vertices, where Δ(G) is the maximum degree of G. Finally, we obtain the metric dimension of the line graph of a tree in terms of its parameters.}
}

@article{partitiondimension,
	title={The partition dimension of a graph},
	author={Chartrand, Gary and Salehi, Ebrahim and Zhang, Ping},
	journal={Aequationes mathematicae},
	volume={59},
	number={1},
	pages={45--54},
	year={2000},
	publisher={Springer},
	doi={10.1007/PL00000127}
}

@article{locating-domination,
	title={On locating-domination in graphs},
	author={Chellali, Mustapha and Mimouni, Malika and Slater, Peter},
	journal={Discussiones Mathematicae Graph Theory},
	volume={30},
	number={2},
	pages={223--235},
	year={2010},
	publisher={Uniwersytet Zielonog{\'o}rski. Wydzia{\l} Matematyki, Informatyki i Ekonometrii}
}

@article{lcc,
	title = {Clique coverings and claw-free graphs},
	journal = {Eur. J. Comb.},
	volume = {88},
	pages = {103114},
	year = {2020},
	issn = {0195-6698},
	doi = {10.1016/j.ejc.2020.103114},
	author = {Csilla Bujtás and Akbar Davoodi and Ervin Győri and Zsolt Tuza},
	abstract = {Let C be a clique covering for E(G) and let v be a vertex of G. The valency of vertex v (with respect to C), denoted by valC(v), is the number of cliques in C containing v. The local clique cover number of G, denoted by lcc(G), is defined as the smallest integer k, for which there exists a clique covering for E(G) such that valC(v) is at most k, for every vertex v∈V(G). In this paper, among other results, we prove that if G is a claw-free graph, then lcc(G)+χ(G)≤n+1.}
}

@article{triangle,
	title = {Covering the edges of a graph with triangles},
	journal = {	Discrete Math.},
	volume = {348},
	number = {1},
	pages = {114226},
	year = {2025},
	issn = {0012-365X},
	doi = {10.1016/j.disc.2024.114226},
	author = {Csilla Bujtás and Akbar Davoodi and Laihao Ding and Ervin Győri and Zsolt Tuza and Donglei Yang},
	keywords = {Edge-disjoint triangles, Edge clique covering, Nordhaus-Gaddum inequality},
	abstract = {In a graph G, let ρ△(G) denote the minimum size of a set of edges and triangles that cover all edges of G, and let α1(G) be the maximum size of an edge set that contains at most one edge from each triangle. Motivated by a question of Erdős, Gallai, and Tuza, we study the relationship between ρ△(G) and α1(G) and establish a sharp upper bound on ρ△(G). We also prove Nordhaus-Gaddum-type inequalities for the considered invariants.}
}

@article{scp,
	title = {Pairwise balanced designs and sigma clique partitions},
	journal = {	Discrete Math.},
	volume = {339},
	number = {5},
	pages = {1450-1458},
	year = {2016},
	issn = {0012-365X},
	doi = {10.1016/j.disc.2015.09.008},
	author = {Akbar Davoodi and Ramin Javadi and Behnaz Omoomi},
	keywords = {Clique partition, Pairwise balanced design, Sigma clique partition number},
	abstract = {In this paper, we are interested in minimizing the sum of block sizes in a pairwise balanced design, where there are some constraints on the size of one block or the size of the largest block. For every positive integers n,m, where m≤n, let S(n,m) be the smallest integer s for which there exists a PBD on n points whose largest block has size m and the sum of its block sizes is equal to s. Also, let S′(n,m) be the smallest integer s for which there exists a PBD on n points which has a block of size m and the sum of it block sizes is equal to s. We prove some lower bounds for S(n,m) and S′(n,m). Moreover, we apply these bounds to determine the asymptotic behaviour of the sigma clique partition number of the graph Kn−Km, the Cocktail party graphs and complement of paths and cycles.}
}

@article{scc,
	title={Edge clique covering sum of graphs},
	author={Davoodi, Akbar and Javadi, Ramin and Omoomi, Behnaz},
	journal={Acta Mathematica Hungarica},
	volume={149},
	pages={82--91},
	year={2016},
	publisher={Springer},
	doi={10.1007/s10474-016-0586-1}
}

@inproceedings{pullman,
	title={Clique coverings of graphs—a survey},
	author={Pullman, Norman J},
	booktitle={Combinatorial Mathematics X: Proceedings of the Conference held in Adelaide, Australia, August 23--27, 1982},
	pages={72--85},
	year={2006},
	organization={Springer},
	doi={10.1007/BFb0071509}
}

@article{Farooq2025,
	doi = {10.1371/journal.pone.0316376},
	author = {Farooq, Umar AND Abbas, Wasim AND Chaudhry, Faryal AND Azeem, Muhammad AND Almohsen, Bandar},
	journal = {PLOS ONE},
	publisher = {Public Library of Science},
	title = {Exploring metric dimension of nanosheets, nanotubes, and nanotori of SiO2},
	year = {2025},
	month = {03},
	volume = {20},
	pages = {1-22},
	abstract = {This work investigates the metric dimension (MD) and edge metric dimension (EMD) of SiO2 nanostructures, specifically nanosheets, nanotubes, and nanotorii. The metric dimension describes the minimum number of vertices required to uniquely identify every other point in a graph. In contrast, the edge metric dimension is the minimum number of vertices needed to distinguish each edge. Understanding these dimensions is essential for characterizing the geometric and structural properties of nanoparticles. Using graph theory techniques, we compute the MD and EMD of various SiO2 nanostructures to elucidate their unique geometries and configurations. Our findings offer precise formulas for these dimensions, critical for designing and optimizing SiO2-based materials with targeted properties. This study provides valuable insights for applications in chemistry, materials science, and nanotechnology, where knowledge of structural characteristics at the nanoscale is crucial.},
	number = {3},
	
}

@article{Liqin2023,
	doi = {10.1371/journal.pone.0290411},
	author = {Liqin, Liu AND Shahzad, Khurram AND Rauf, Abdul AND Tchier, Fairouz AND Aslam, Adnan},
	journal = {PLOS ONE},
	publisher = {Public Library of Science},
	title = {Metric and fault-tolerant metric dimension for GeSbTe superlattice chemical structure},
	year = {2023},
	month = {11},
	volume = {18},
	pages = {1-14},
	abstract = {The concept of metric dimension has many applications, including optimizing sensor placement in networks and identifying influential persons in social networks, which aids in effective resource allocation and focused interventions; finding the source of a spread in an arrangement; canonically labeling graphs; and inserting typical information in low-dimensional Euclidean spaces. In a graph G, the set S⊆V(G) of minimum vertices from which all other verticescan be uniquely determined by the distances to the vertices in S is called the resolving set. The cardinality of the resolving set is called the metric dimension. The set S is called fault-tolerant resolving set if S\{v} is still a resolving set of G. The minimum cardinality of such a set S is called fault-tolerant metric dimension of G. GeSbTe super lattice is the latest chemical compound to have electronic material that is capable of non-volatile storing phase change memories with minimum energy usage. In this work, we calculate the resolving set (fault tolerant resolving set) to find the metric dimension(fault-tolerant metric dimension) for the molecular structure of the GeSbTe lattice. The results may be useful in comparing network structure and categorizing the structure of the GeSbTe lattice.}

@article{zhang2023factorgraph,
	title = {A multi-sensor fusion positioning approach for indoor mobile robot using factor graph},
	journal = {Measurement},
	volume = {216},
	pages = {112926},
	year = {2023},
	issn = {0263-2241},
	doi = {10.1016/j.measurement.2023.112926},
	author = {Liyang Zhang and Xingyu Wu and Rui Gao and Lei Pan and Qian Zhang},
	keywords = {Navigation and positioning, Indoor robot, Multi-sensor fusion, Factor graph},
	abstract = {Navigation and positioning based on sensor fusion has received great attention in low computational complexity, high positioning accuracy and good robustness for indoor mobile robots. This paper presents a multi-sensor fusion factor graph (MSF-FG) positioning method applying IMU, Odometer and LiDAR sensors. By taking advantage of the sensor characteristics and using factor graph theory, a MSF-FG positioning model is constructed to improve positioning accuracy and reduce computational complexity. In addition, an adaptive function is designed to improve the robustness of the system by dynamically adjusting the weight of each factor. Meanwhile, the proposed algorithm is derived by Gauss-Newton and Levenberg-Marquardt methods. Simulation and experimental results show that compared with the conventional inertial navigation system (INS) and extended Kalman filter (EKF) algorithms, the proposed MSF-FG positioning method not only reduces the mean location error by about 40%, but also reduces the computational complexity and enhances the stability of the system.}
}

@article{huang2025graphpool,
	title = {Sparse regularized graph pooling network optimal sensor placement method for diesel engine vibration fault perception system},
	journal = {Measurement},
	volume = {242},
	pages = {115830},
	year = {2025},
	issn = {0263-2241},
	doi = {10.1016/j.measurement.2024.115830},
	author = {Anzheng Huang and Zhiwei Mao and Fengchun Liu and Jinjie Zhang and Xiangxin Kong and Zhinong Jiang},
	keywords = {Vibration analysis, Time domain multi-source impulse identification, Reciprocating machinery, Optimized sensor placement},
	abstract = {Vibration sensor network optimization increases monitoring effectiveness and reduces sensor quantity and transmission burden. However, the traditional model-driven methods depend on precise finite element models, challenging for complex machinery. This paper presents a data-driven approach using the Sparse Regularized Graph Pooling Network (SRGPN), which conceptualizes sensor networks as graphs and uses graph pooling to identify optimal sensor combinations. A sparsity regularization term related to the number of sensors is included in the loss function, aiming for the minimal yet effective sensor combination. Additionally, a monitoring capability metric suited for diesel engines with multi-source impulse signals is proposed, reflecting the sensors’ monitoring capacity. Validated through simulations and tests on a diesel engine test bench, the results show that SRGPN optimally places sensors near excitation sources, balancing sensor count and monitoring needs. This approach shows potential for optimizing sensor placements in condition monitoring.}
}

@article{mermer2025entropy,
	title = {A novel entropy calculation approach for sensor placement optimization for a satellite thermal control system},
	journal = {Measurement},
	volume = {249},
	pages = {116963},
	year = {2025},
	issn = {0263-2241},
	doi = {10.1016/j.measurement.2025.116963},
	author = {Erdinç Mermer and Rahmi Ünal},
	keywords = {Spacecraft thermal control, Information entropy, Uncertainties, Optimum sensor location},
	abstract = {Spacecraft thermal modeling and control are among the critical but challenging tasks in maintaining the operating temperature of spacecraft subsystems within safe limits during their operational life. This study aims to optimize satellite thermal control systems by generating a sensor placement algorithm for the mathematical model. It presents a satellite thermal mathematical model that can accurately represent temperatures on the spacecraft and identify the optimal placements of thermal sensors. In this way, temperatures read on sensors are used to provide more precise temperature estimations for the rest of the satellite subsystems. Information entropy theory, which treats uncertainty or unpredictability of data, is applied to make the temperature estimates more precise for satellite subsystems. Bayesian methods are also used to incorporate prior knowledge and update predictions, which allows robust uncertainty management. This comprehensive methodology contributes to more efficient thermal control, ensuring spacecraft reliability and performance in the face of complex thermal challenges.}
}

@article{Transp,
	author = {Sybil Derrible and Christopher Kennedy},
	title = {Applications of Graph Theory and Network Science to Transit Network Design},
	journal = {Transp. Rev.},
	volume = {31},
	number = {4},
	pages = {495--519},
	year = {2011},
	publisher = {Routledge},
	doi = {10.1080/01441647.2010.543709}
}

\appendix

\end{document}